\newtheorem{theorem}{Theorem}[section]
\newtheorem{lemma}[theorem]{Lemma}
\newtheorem{remark}[theorem]{Remark}
\newtheorem{proposition}[theorem]{Proposition}
\newtheorem{corollary}[theorem]{Corollary}
\newtheorem{definition}[theorem]{Definition}
\newcommand{\newword}[1]{\emph{\textbf{#1}}}
\newcommand{\kohnert}{\mathfrak{K}}
\newcommand{\fSchur}{\mathcal{S}^{\mathrm{flag}}}
\newcommand{\comp}[1]{\mathbf{#1}}
\newcommand{\wt}{\mathbf{wt}}
\newcommand{\sgn}{\mathbf{sign}}
\newcommand{\height}{\mathbf{ht}}
\newcommand{\D}{\mathbb{D}}
\newcommand{\schubert}{\mathfrak{S}}
\newcommand{\key}{\kappa}
\newcommand{\atom}{\mathcal{A}}
\newcommand{\h}{\mathfrak{h}}
\newcommand{\KD}{\mathrm{KD}}
\newcommand{\sh}{\mathbf{sh}}
\newcommand{\sort}{\mathbf{sort}}
\newcommand{\rev}{\mathbf{rev}}
\newcommand{\maj}{\mathbf{maj}}
\newcommand{\comaj}{\mathbf{comaj}}
\newcommand{\inv}{\mathbf{inv}}
\newcommand{\coinv}{\mathbf{coinv}}
\newcommand{\SSYT}{\mathrm{SSYT}}
\newcommand{\SSKT}{\mathrm{SSKT}}
\newcommand{\RSK}{\mathrm{RSK}}
\newcommand{\fRSK}{\mathrm{RSK}^{\mathrm{flag}}}
\newcommand{\rSSAF}{\overline{\mathrm{SSAF}}}
\newcommand{\rSSYT}{\overline{\mathrm{SSYT}}}
\newcommand{\pairs}{\mathcal{N}}
\newcommand{\fpairs}{\pairs^{\mathrm{flag}}}
\newlength\cellsize \setlength\cellsize{10\unitlength}
\newcommand\cellify[1]{\def\thearg{#1}\def\nothing{}%
	\ifx\thearg\nothing\vrule width0pt height\cellsize depth0pt%
	\else\hbox to 0pt{\usebox2\hss}\fi%
	\vbox to 10\unitlength{\vss\hbox to 10\unitlength{\hss$_{#1}$\hss}\vss}}
\newcommand\tableau[1]{\vtop{\let\\=\cr
		\setlength\baselineskip{-10000pt}
		\setlength\lineskiplimit{10000pt}
		\setlength\lineskip{0pt}
		\halign{&\cellify{##}\cr#1\crcr}}}
\newcommand\boxify[1]{\def\thearg{#1}\def\nothing{}%
	\ifx\thearg\nothing\vrule width0pt height\cellsize depth0pt%
	\else\hbox to 0pt{\usebox2\hss}\fi%
	\vbox to \cellsize{\vss\hbox to \cellsize{\hss$_{#1}$\hss}\vss}}
\newcommand{\circify}[1]{\def\thearg{#1}\def\nothing{}%
	\ifx\thearg\nothing\vrule width0pt height\cellsize depth0pt%
	\else\hbox to 0pt{\usebox3\hss}\fi%
	\vbox to \cellsize{\vss\hbox to \cellsize{\hss$_{#1}$\hss}\vss}}
\newcommand\nullify[1]{\def\thearg{#1}\def\nothing{}%
	\ifx\thearg\nothing\vrule width0pt height\cellsize depth0pt%
	\else\hbox to 0pt{\hss}\fi%
	\vbox to \cellsize{\vss\hbox to \cellsize{\hss$_{#1}$\hss}\vss}}
\newcommand\cirtab[1]{\vline\vtop{\let\\=\cr
		\setlength\baselineskip{-8000pt}
		\setlength\lineskiplimit{8000pt}
		\setlength\lineskip{0pt}
		\halign{&\circify{##}\cr#1\crcr}}}
\newcommand\nulltab[1]{\vtop{\let\\=\cr
		\setlength\baselineskip{-8000pt}
		\setlength\lineskiplimit{8000pt}
		\setlength\lineskip{0pt}
		\halign{&\nullify{##}\cr#1\crcr}}}
\title{Complete Flagged Homogeneous Polynomials}
\date{}
\author{Henry Ehrhard}
\begin{document}
	
	\begin{abstract}
		We introduce a new basis for the polynomial ring which lifts the complete homogeneous symmetric polynomials while retaining representation theoretic significance. Using a specialized RSK algorithm we give an explicit nonnegative expansion into key polynomials and, generalizing the special rim hook tabloids of Eğecioğlu and Remmel, give an explicit signed expansion for key polynomials into this new basis.
	\end{abstract}

	\maketitle
	\tableofcontents
	
	%
	\section{Introduction}\label{sec:intro}
	%
	The classical theory of symmetric functions is beautifully described by the combinatorics of Young tableaux and has applications to the representation theory of the general linear group in particular. The distinguished symmetric basis $\{s_\lambda\}$ of Schur polynomials corresponds to irreducible representations. A related picture emerges when one considers subrepresentations of the Borel subgroup $B\subset \mathrm{GL}_n$ of lower triangular matrices. The characters $\key_\comp a$ are the key polynomials also called Demazure characters and they are no longer symmetric but rather a basis for the polynomial ring \cite{Dem74a,Dem74}. These objects remain of high combinatorial interest with many models that generalize the Schur polynomials. The key polynomials can be thought of as truncated, nonsymmetric, or ``flagged'' versions of the Schur polynomials (although ``flagged Schur polynomials'' refers to a different set of objects).
	
	The other classical symmetric bases have not found comparable counterparts, but we define a strong candidate for the complete homogeneous symmetric polynomials $\{h_\lambda\}$. We call them the complete flagged homogeneous polynomials $\h_\comp a$. These are a basis for the polynomial ring that lift the $\{h_\lambda\}$ basis in a suitable way. Like the Schur polynomials, the complete symmetric polynomials are characters of the general linear group so we might ask that $\h_\comp a$ is the character of a $B$-module, which pans out. This is to say that the new basis enjoys a similar relationship to the complete symmetric polynomials as the key polynomials do to Schur polynomials.
	
	On another axis, the relationship between $\{\key_\comp a\}$ and $\{ \h_\comp a \}$ refines the relationship between $\{ s_\lambda \}$ and $\{ h_\lambda \}$. The classical case is most easily encoded by the well-known Kostka coefficients, which describe the Schur expansion of $h_\lambda$ and are given combinatorial meaning by the RSK correspondence. We determine a similar model for the key expansion of $\h_\comp a$ by utilizing Haglund, Haiman, and Loehr's combinatorial formula for nonsymmetric Macdonald polynomials \cite{HHL08} and a nonsymmetric Cauchy identity discovered by \cite{Las03}. We reprove the latter by introducing an RSK analogue comparable to the one given by Mason \cite{Mas08}, but more specifically suited for the setting.
	
	The inverse Kostka coefficients for the opposite expansion were given a signed combinatorial formula by Eğecioğlu and Remmel \cite{ER90}. Here too, we obtain a satisfying model for the flagged analogues. The resulting combinatorics of snakes generalizes the notion of rim hooks, alternatively called ribbons or border strips. Rim hooks appear in other applications such as the Murnaghan-Nakayama rule and LLT polynomials \cite{Mur37,Nak40,LLT97}, so it is tempting to imagine that snakes could find additional use cases as well. We also discuss how the new expansion explains the cancellation that occurs in the classical formula.
	
	Our paper adheres to the following outline. In Section \ref{sec:h} we define the new basis and prove some basic facts. Section \ref{sec:rep} establishes a representation theoretic interpretation. In Section \ref{sec:nonsym-macdonald} we review the combinatorics of non-attacking fillings used in Haglund, Haiman, and Loehr's formula \cite{HHL08} and give our preferred interpretation of the flagged Kostka analogues. Section \ref{sec:rsk} develops our flagged RSK algorithm and is not logically necessary for understanding the rest of our paper. We show in Section \ref{sec:schubert} that the $\h_\comp a$ polynomials and, in fact, their products are nonnegative sums of Schubert polynomials. In Section \ref{sec:inverse} we define snakes as generalizations of rim hooks, and use them to describe the flagged inverse Kostka analogues.
	
	%
	\section{Complete Flagged Homogeneous Polynomials}\label{sec:h}
	%
	We begin with some basic notions. A \newword{weak composition} $\comp a=(\comp a_1,\comp a_2,\ldots,\comp a_n)$ of length at most $n$ is an element of $\mathbb N^n$, and we can always identify $\mathbb N^n$ with $\mathbb N^n\times\{0\}\subset\mathbb N^{n+1}$. The value $\comp a_i$ is called the $i$th \newword{part} of $\comp a$, and the sum of parts is the \newword{size} of $\comp a$. A \newword{partition} is a weak composition whose parts are in weakly decreasing order. Usually we will have an implicit fixed value of $n$ but we will sometimes be more explicit, particularly in Section \ref{sec:rsk}.
	
	Given a weak composition $\comp a$ and a set of indeterminates $x_1,x_2,\ldots, x_n$, we write $x^\comp a=\prod_{1\le i\le n} x_i^{\comp a_i}$. We let $\sort(\comp a)$ denote the partition obtained by putting the parts of $\comp a$ into weakly decreasing order, and $\rev(\comp a)$ denotes the weak composition $(\comp a_n,\comp a_{n-1},\ldots,\comp a_1)$.
	
	The \newword{complete homogeneous symmetric polynomial} of degree $k$ is
	\[
	h_k(x_1,\ldots,x_n)=\sum_{1\le i_1\le i_2\le\cdots\le i_k\le n} x_{i_1}x_{i_2}\cdots x_{i_k}.
	\]
	More generally, the complete homogeneous symmetric polynomial of a partition $\lambda$ is
	\begin{equation}
		h_{\lambda}(x_1,\ldots,x_n)=\prod_{1\le i\le n} h_{\lambda_i}(x_1,\ldots,x_n).
	\end{equation}
	These form a basis for the space of \newword{symmetric polynomials}. We let $\mathcal M_n$ denote the set of $n\times n$ matrices with entries in $\mathbb N$, and let $\mathcal L_n$ denote the subset of lower triangular matrices. Another well-known characterization of these polynomials is
	\begin{equation}
	h_{\lambda}(x_1,\ldots,x_n)=\sum_{\substack{M\in\mathcal M_n\\\mathrm{row}(M)=\lambda }} x^{\mathrm{col}(M)}
	\end{equation}
	where the \newword{row and column sums} $\mathrm{row}(M)$ and $\mathrm{col}(M)$ are the weak compositions whose $i$th part is the sum of entries in the $i$th row (respectively column) of $M$. The \newword{complete homogeneous symmetric function} indexed by the partition $\lambda$ is the stable limit of $h_\lambda(x_1,\ldots,x_n)$ as $n\to\infty$. We will typically use the symbol $h_\lambda$ alone to denote the stable limit, and specify the arguments when we want the polynomial. This notation makes more sense when one considers that the polynomial is recovered by setting all but the first $n$ indeterminates in $h_\lambda$ to zero.
	
	What follows is our nonsymmetric analogue of the complete homogeneous symmetric polynomials. 
	
	\begin{definition}\label{def:nonsym-h}
		The \newword{complete flagged homogeneous polynomial} indexed by a weak composition $\comp a$ is defined by
		\[
		\h_{\comp a}=\prod_{i\ge 1} h_{\comp a_i}(x_1,\ldots, x_i).
		\]
	\end{definition}
	
	Alternatively, we may think of these polynomials as sums over certain lower triangular matrices.
	\begin{lemma}\label{lem:hmatrix}
		We have \[
		\h_{\comp a}=\sum_{\substack{L\in\mathcal L_n\\\mathrm{row}(L)=\comp a}} x^{\mathrm{col}(L)}.
		\]
	\end{lemma}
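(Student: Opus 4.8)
The plan is to expand the product in Definition \ref{def:nonsym-h} and identify each monomial with a unique lower triangular matrix. First I would recall the characterization $h_k(x_1,\ldots,x_i)=\sum_{r_1+\cdots+r_i=k,\ r_j\ge 0} x_1^{r_1}\cdots x_i^{r_i}$, which is the $i$-variable specialization of the matrix formula for $h_k$: a choice of exponents $(r_1,\ldots,r_i)$ summing to $\comp a_i$ is exactly a choice of the nonzero-eligible entries of the $i$th row of a lower triangular matrix. So for each $i\ge 1$, a term of $h_{\comp a_i}(x_1,\ldots,x_i)$ is indexed by the $i$th row $(L_{i1},\ldots,L_{ii},0,\ldots,0)$ of a matrix $L\in\mathcal L_n$ with $\sum_j L_{ij}=\comp a_i$, contributing the monomial $\prod_{j\le i} x_j^{L_{ij}}$.

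Next I would take the product over $i\ge 1$ (which is really a finite product, since $\comp a_i=0$ for $i>n$ and $h_0=1$). Distributing the product over the sums, a term of $\h_{\comp a}$ is obtained by independently choosing each row $i$, i.e.\ choosing an entire matrix $L\in\mathcal L_n$ with $\mathrm{row}(L)=\comp a$, and the associated monomial is the product over all rows, namely $\prod_{i}\prod_{j\le i} x_j^{L_{ij}} = \prod_j x_j^{\sum_i L_{ij}} = x^{\mathrm{col}(L)}$. This gives exactly $\h_{\comp a}=\sum_{L\in\mathcal L_n,\ \mathrm{row}(L)=\comp a} x^{\mathrm{col}(L)}$.

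The one point requiring a little care is the bookkeeping that the index set for the expansion of the product is genuinely in bijection with $\{L\in\mathcal L_n:\mathrm{row}(L)=\comp a\}$: distinct choices of rows give distinct matrices, every such matrix arises, and the constraint that row $i$ of $L$ is supported on columns $1,\ldots,i$ is precisely the lower-triangularity condition, matching the variable set $x_1,\ldots,x_i$ appearing in the $i$th factor $h_{\comp a_i}(x_1,\ldots,x_i)$. I do not anticipate a genuine obstacle here; the main thing is to state the bijection cleanly and invoke the single-row identity (the $i$-variable case of the displayed matrix formula for $h_\lambda$, with $\lambda$ a single part) as the base case before taking the product.
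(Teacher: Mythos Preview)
Your proposal is correct and follows essentially the same approach as the paper: expand each factor $h_{\comp a_i}(x_1,\ldots,x_i)$ as a sum over $\comp c\in\mathbb N^i$ with $|\comp c|=\comp a_i$, identify such a $\comp c$ with the $i$th row of a lower triangular matrix, and distribute the product to obtain the sum over $L\in\mathcal L_n$ with $\mathrm{row}(L)=\comp a$. The paper's proof is simply a more compressed version of what you wrote.
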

	\begin{proof}
		We have
		\[
		\h_\comp a=\prod_{i\ge 1} h_{\comp a_i}(x_1,\ldots, x_i)=\prod_{i\ge 1}\sum_{\substack{\comp c\in\mathbb N^i\\|\comp c|=\comp a_i}}x^{\comp c}=\sum_{\substack{L\in \mathcal L_n\\\mathrm{row}(L)=\comp a}} x^{\mathrm{col}(L)},
		\]
		the last equality identifying a choice of $c\in \mathbb N^i$ with the $i$th row vector of a matrix.
	\end{proof}

	In order for us to consider these polynomials a convincing analogue for the complete symmetric basis, they should at minimum lift the complete homogeneous symmetric basis and themselves form a basis for polynomials. The next couple of propositions establish these facts.
	
	\begin{proposition}\label{prop:stable_h}
		The complete flagged homogeneous polynomials have the stable limit
		\[ \lim_{k\rightarrow\infty} \h_{0^k \times \comp{a}}(x_1,x_2,\ldots,x_k,0,0,\ldots) = h_{\sort(\comp{a})}.\]
	\end{proposition}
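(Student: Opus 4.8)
The plan is to unwind the definition of $\h_{0^k \times \comp a}$ and show that as $k \to \infty$ it becomes the generating function for arbitrary matrices with row sums $\comp a$ (no lower-triangularity constraint), which is exactly $h_{\sort(\comp a)}$. Write $\comp a = (\comp a_1, \ldots, \comp a_m)$ so that the weak composition $0^k \times \comp a$ has its nonzero parts in positions $k+1, \ldots, k+m$. By Definition \ref{def:nonsym-h},
\[
\h_{0^k \times \comp a} = \prod_{i \ge 1} h_{(0^k \times \comp a)_i}(x_1, \ldots, x_i) = \prod_{j=1}^{m} h_{\comp a_j}(x_1, \ldots, x_{k+j}),
\]
since the first $k$ factors are $h_0 = 1$ and the $j$th nonzero part contributes $h_{\comp a_j}$ in the variables $x_1, \ldots, x_{k+j}$. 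After substituting $x_{k+1} = x_{k+2} = \cdots = 0$, the factor $h_{\comp a_j}(x_1, \ldots, x_{k+j})$ becomes $h_{\comp a_j}(x_1, \ldots, x_k)$. Hence $\h_{0^k \times \comp a}(x_1, \ldots, x_k, 0, 0, \ldots) = \prod_{j=1}^m h_{\comp a_j}(x_1, \ldots, x_k)$, which is precisely $h_{\comp a}(x_1, \ldots, x_k) = h_{\sort(\comp a)}(x_1, \ldots, x_k)$ (the complete homogeneous symmetric polynomial depends only on the multiset of parts, not their order).

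The remaining step is to observe that the stable limit of $h_{\sort(\comp a)}(x_1, \ldots, x_k)$ as $k \to \infty$ is, by definition, the symmetric function $h_{\sort(\comp a)}$; this is the standard fact recalled in the paragraph preceding Definition \ref{def:nonsym-h}, namely that $h_\lambda(x_1, \ldots, x_n)$ is recovered from $h_\lambda$ by setting all but the first $n$ variables to zero, and conversely the limit reconstructs $h_\lambda$. Thus $\lim_{k \to \infty} \h_{0^k \times \comp a}(x_1, \ldots, x_k, 0, 0, \ldots) = h_{\sort(\comp a)}$, as claimed.

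There is essentially no obstacle here: the only point requiring a moment's care is bookkeeping the index shift, i.e.\ checking that zero-padding $\comp a$ on the left by $k$ parts precisely has the effect of enlarging each variable window $\{x_1, \ldots, x_i\}$ by $k$, so that after killing the high variables every factor collapses back to the same symmetric polynomial in $x_1, \ldots, x_k$. One could alternatively run the argument through Lemma \ref{lem:hmatrix}: the matrices $L \in \mathcal{L}_{k+m}$ with $\mathrm{row}(L) = 0^k \times \comp a$ have their only nonzero rows in positions $k+1, \ldots, k+m$, and lower-triangularity allows those rows to have support anywhere in columns $1, \ldots, k+m$; after setting $x_{k+1}, \ldots = 0$ only the columns $1, \ldots, k$ survive, and as $k \to \infty$ every nonnegative integer matrix with row sums $\comp a$ (equivalently $\sort(\comp a)$) is realized, matching the matrix formula for $h_{\sort(\comp a)}$. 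Either route is routine; I would present the direct computation as the cleaner of the two.
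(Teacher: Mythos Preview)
Your proof is correct. Your primary route---unwinding Definition~\ref{def:nonsym-h} directly to get $\h_{0^k\times\comp a}(x_1,\ldots,x_k,0,\ldots)=\prod_j h_{\comp a_j}(x_1,\ldots,x_k)=h_{\sort(\comp a)}(x_1,\ldots,x_k)$---is actually a bit more direct than the paper's argument, which instead works through the matrix characterization of Lemma~\ref{lem:hmatrix} and compares coefficients of $x^{\comp b}$ on both sides by counting $\mathbb N$-matrices with prescribed row and column sums. Your alternative sketch via Lemma~\ref{lem:hmatrix} is essentially the paper's own proof. Both approaches are short; yours has the advantage of never leaving the product formula, while the paper's makes the coefficientwise stabilization explicit (it pinpoints that the two sides agree monomial-by-monomial once $k$ exceeds the length of the monomial's support).
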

	\begin{proof}
		For a weak composition $\comp b$, the coefficient of $x^{\comp b}$ in $h_{\sort(\comp{a})}$ is the number of $\mathbb N$-matrices with row sums $\sort(\comp a)$ and column sums $\comp b$. This is the same as the number of $\mathbb N$-matrices with row sums $0^k \times \comp{a}$ and column sums $\comp b$. The coefficient of $x^{\comp b}$ in $\h_{0^k \times \comp{a}}(x_1,\ldots,x_k,0,\ldots,0)$ is the number of $\mathbb N$-matrices with row sums $0^k \times \comp{a}$, column sums $\comp b$, and support in the first $k$ columns. These objects coincide as soon as $k\ge n$.
	\end{proof}
	
	The \newword{dominance order} on weak compositions is defined by $\comp a\trianglelefteq \comp b$ if and only if $\comp a_1+\cdots+\comp a_k\le \comp b_1+\cdots+\comp b_k$ for all $k\ge 1$. In this case we say $\comp b$ \newword{dominates} $\comp a$.
	
	\begin{proposition}\label{prop:hbasis}
		The set $\{\h_{\comp{a}}\}$ indexed by weak compositions of length at most $n$ is a $\mathbb{Z}$-basis for $\mathbb{Z}[x_1,\ldots,x_n]$.
	\end{proposition}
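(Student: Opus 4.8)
The plan is to establish a triangularity relationship between $\{\h_{\comp a}\}$ and the monomial basis $\{x^{\comp b}\}$ with respect to dominance order, from which the basis property follows immediately since the change-of-basis matrix will be unitriangular over $\mathbb Z$. Concretely, I would prove the following claim: for any weak composition $\comp a$ of length at most $n$,
\[
\h_{\comp a}=x^{\comp a}+\sum_{\comp b\vartriangleleft \comp a} c_{\comp a,\comp b}\, x^{\comp b}
\]
where the sum is over weak compositions $\comp b$ of the same size strictly dominated by $\comp a$, and $c_{\comp a,\comp b}\in\mathbb N$. Using Lemma~\ref{lem:hmatrix}, the coefficient $c_{\comp a,\comp b}$ counts lower triangular $\mathbb N$-matrices $L$ with $\mathrm{row}(L)=\comp a$ and $\mathrm{col}(L)=\comp b$, so everything reduces to two combinatorial observations about such matrices.

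The first observation is that whenever a lower triangular matrix $L$ has $\mathrm{row}(L)=\comp a$ and $\mathrm{col}(L)=\comp b$, then $\comp b\trianglelefteq\comp a$. Indeed, the entries contributing to $\comp b_1+\cdots+\comp b_k$, namely all entries in the first $k$ columns, are (by lower-triangularity) confined to the first... wait — lower triangular means entries below the diagonal, so column $j$ has nonzero entries only in rows $j,j+1,\ldots,n$. Thus the entries in columns $1,\ldots,k$ all lie in rows $1,\ldots,n$, which is no restriction; instead I should read off rows: row $i$ has nonzero entries only in columns $1,\ldots,i$, so $\comp a_1+\cdots+\comp a_k$ counts entries in rows $1,\ldots,k$, all of which lie in columns $1,\ldots,k$, hence $\comp a_1+\cdots+\comp a_k\le \comp b_1+\cdots+\comp b_k$. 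That gives $\comp a\trianglelefteq\comp b$, i.e. $\h_{\comp a}$ is supported on compositions \emph{dominating} $\comp a$; I would state the triangularity in that direction and note it is equally good for the unitriangularity argument. The second observation is that $\comp b=\comp a$ is achieved by exactly one matrix, namely the diagonal matrix $\mathrm{diag}(\comp a_1,\ldots,\comp a_n)$: if $\mathrm{row}(L)=\mathrm{col}(L)=\comp a$ then equality holds in every partial-sum inequality, which forces every entry strictly below the diagonal to vanish, so $c_{\comp a,\comp a}=1$.

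With these in hand, order weak compositions of length $\le n$ by any linear extension of dominance order; the matrix expressing $\{\h_{\comp a}\}$ in terms of $\{x^{\comp b}\}$ is then upper unitriangular with integer entries, hence invertible over $\mathbb Z$, and so $\{\h_{\comp a}\}$ is a $\mathbb Z$-basis for the $\mathbb Z$-span of the relevant monomials. Finally I should check that the monomials $x^{\comp b}$ with $\comp b$ of length $\le n$ are exactly the monomials in $\mathbb Z[x_1,\ldots,x_n]$, which is immediate, and that the indexing sets on both sides match degree by degree (both $\{\h_{\comp a}\}$ and $\{x^{\comp a}\}$ are graded, homogeneous of degree $|\comp a|$), so the argument can be run one degree at a time on finite sets. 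The only mild subtlety — the part I would be most careful about — is getting the direction of the dominance inequality right and confirming that the diagonal term genuinely has coefficient $1$ and no smaller composition sneaks in; once the partial-sum bookkeeping in the two observations above is pinned down, the rest is the standard triangularity-implies-basis argument.
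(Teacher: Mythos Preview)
Your proposal is correct and follows essentially the same argument as the paper: both use Lemma~\ref{lem:hmatrix} to show that $\mathrm{col}(L)\trianglerighteq\mathrm{row}(L)$ for lower triangular $L$, identify the diagonal matrix as the unique witness for the leading term $x^{\comp a}$, and conclude via unitriangularity of the change-of-basis matrix with respect to a linear extension of dominance order. Your self-correction on the direction of the dominance inequality lands exactly where the paper does, and your added remarks about working degree by degree and verifying uniqueness via equality in the partial sums are harmless elaborations of the same idea.
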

	
	\begin{proof}
		Any nonzero entry in one of the first $k$ rows of a lower triangular $\mathbb N$-matrix $L$ is also contained in one of the first $k$ columns. Therefore $\mathrm{col}(L)\trianglerighteq\mathrm{row}(L)$. Using the characterization of $\h_{\comp a}$ in Lemma \ref{lem:hmatrix} we then have that $x^{\comp b}$ can appear in the monomial expansion of $\h_{\comp a}$ only if $\comp b$ dominates $\comp a$. Therefore the transition matrix from $\{\h_{\comp a}\}$ to $\{x^{\comp a}\}$ is upper triangular if both sets are ordered according to a linear extension of dominance order on the indexing compositions. In fact the transition matrix is upper uni-triangular since $x^{\comp a}$ appears in the expansion of $h_{\comp a}$ exactly once, indexed by the unique diagonal matrix with row sums $\comp a$. So the transition matrix is invertible. Since $\{x^{\comp a}\}$ is a basis for $\mathbb{Z}[x_1,\ldots,x_n]$, so is $\{\h_{\comp a}\}$.
	\end{proof}
	It is worth noting that the $\{\h_\comp a\}$ basis does not contain its symmetric counterpart. For example
	\[h_{11}(x_1,x_2)=\h_{02}+\h_{11}-\h_{20}.\]

	\section{$\h$ as a Character}\label{sec:rep}
	The polynomial $\h_{\comp a}$ also has a representation theoretic interpretation which we can see through the combinatorics of Kohnert polynomials. 
	
	We use the notation $[n]=\{1,2,\ldots,n\}$. A finite subset $D$ of $\mathbb Z_+\times [n]$ is called a \newword{diagram} and its elements are referred to as \newword{cells}. A cell is conceptualized as a box sitting at a lattice point in the first quadrant of the Cartesian coordinate plane. Thus, when we speak of a cell being ``weakly left'' of some other cell or column index, we are saying its first coordinate is weakly less than the comparable value. A cell lies ``strictly above'' row $r$ if its second index is strictly greater than $r$, etc.
	
	\begin{definition}\cite{Koh91}
		Given a diagram $D$ we may perform a \newword{Kohnert move} by removing the rightmost cell $(c,r)$ in some row of $D$, and appending a cell in the topmost vacant position in column $c$ strictly below row $r$, assuming such a position exists.
		
		The set of diagrams obtainable from $D$ through some sequence of Kohnert moves is denoted $\KD(D)$. Such diagrams are \newword{Kohnert diagrams} of $D$.
	\end{definition}

	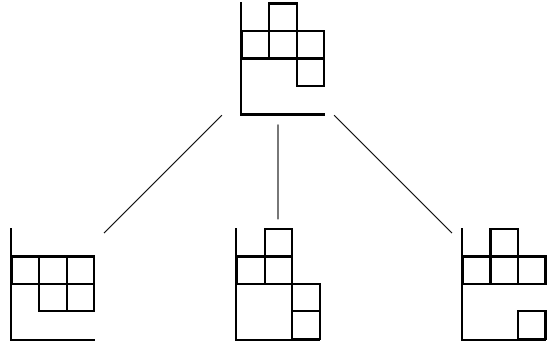
\begin{figure}[ht]
		\begin{center}
			\begin{tikzpicture}
				\node at (0,2) (A){
						\ytableausetup{centertableaux, boxsize=1em}
						\vline
						\begin{ytableau}
							\none&~&\none\\ ~ & ~&~\\ \none&\none&~\\\none&\none&\none \\\hline
						\end{ytableau}
				};
				\node at (-3,-1) (B){
					\vline
					\begin{ytableau}
						\none&\none&\none\\ ~ & ~&~\\ \none&~&~\\\none&\none&\none \\\hline
					\end{ytableau}
				};
				\node at (0,-1) (C) {
					\vline
					\begin{ytableau}
						\none&~&\none\\ ~ & ~&\none\\ \none&\none&~\\\none&\none&~ \\\hline
					\end{ytableau}
				};
				\node at (3,-1) (D){
					\vline
					\begin{ytableau}
						\none&~&\none\\ ~ & ~&~\\ \none&\none&\none\\\none&\none&~ 	\\\hline
					\end{ytableau}
				};
			\draw[thin] (A) -- (B);
			\draw[thin] (A) -- (C);
			\draw[thin] (A) -- (D);
			\end{tikzpicture}
		\end{center}
		\caption{\label{fig:kohnert}The three possible Kohnert moves on the top diagram.}
	\end{figure}

	\begin{definition}[\cite{AS22}]
		The \newword{Kohnert polynomial} of a diagram $D$ is
		\[
		\kohnert_D=\sum_{T\in\KD(D)}x^{\wt(T)}
		\]
		where $\wt(T)$ is the weak composition defined by $\wt(T)_i$ being the number of cells in row $i$ of $T$. By convention $\kohnert_\emptyset=1$.
	\end{definition}

	A diagram $D$ also indexes a $B$-module $\fSchur_D$ called a \newword{flagged Schur module}, where $B\subset\mathrm{GL}_n$ is the subalgebra of lower triangular matrices. The \newword{character} of $\fSchur_D$ is the trace of the action of the diagonal matrix $\mathrm{diag}(x_1,\ldots,x_n)$. Precise definitions can be found in \cite{RS98} or \cite{AABE23}.
	
	A diagram $D$ is \newword{southwest} if $(d,r),(c,s)\in D$ with $c<d$ and $r<s$ implies $(c,r)\in D$. The following was conjectured in \cite{AS22}.
	\begin{theorem}[\cite{AABE23}]\label{thm:kohnert}
		For a southwest diagram $D$ the character of $\fSchur_D$ coincides with the Kohnert polynomial $\kohnert_D$.
	\end{theorem}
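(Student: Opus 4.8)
The plan is to exhibit a weight basis of $\fSchur_D$ whose $\mathrm{diag}(x_1,\dots,x_n)$-weights form exactly the multiset $\{\wt(T):T\in\KD(D)\}$. Since the character of any $B$-module is the trace of $\mathrm{diag}(x_1,\dots,x_n)$ on any basis of torus weight vectors, which equals $\sum_v x^{\wt(v)}$, such a basis immediately yields $\operatorname{char}\fSchur_D=\sum_{T\in\KD(D)}x^{\wt(T)}=\kohnert_D$. So the whole problem is to identify the weights of $\fSchur_D$, with multiplicity, with the weights of the Kohnert diagrams of $D$.

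First I would unwind the definition of the flagged Schur module (following Reiner--Shimozono \cite{RS98}). With $V=\mathbb{C}^n$, standard basis $e_1,\dots,e_n$, and columns $C_1,C_2,\dots$ of $D$, there is a $B$-equivariant embedding $\fSchur_D\hookrightarrow\bigotimes_j\bigwedge^{|C_j|}V$, and $\fSchur_D$ is the cyclic $B$-submodule generated by $v_D=\bigotimes_j\bigl(\bigwedge_{i\in C_j}e_i\bigr)$; the flag $V_1\subset\cdots\subset V_n$ is automatically respected because $B$ stabilizes it. A spanning set is then given by the decomposables attached to flag-respecting fillings of $D$ by $[n]$, and the relations among these are the Plücker/Garnir straightening relations within and between columns. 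For northwest diagrams this machinery yields a standard-monomial weight basis $\mathcal{B}_D$ indexed by flagged column-strict tableaux (Reiner--Shimozono, Magyar); a southwest diagram is carried to a northwest one by a $180^\circ$ rotation together with the automorphism $g\mapsto w_0(g^{-1})^{\top}w_0$ of $B$, so after transporting back, $\fSchur_D$ again has a combinatorial weight basis $\mathcal{B}_D$, and $\operatorname{char}\fSchur_D=\sum_{T\in\mathcal{B}_D}x^{\operatorname{cont}(T)}$.

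It then remains to construct a weight-preserving bijection $\mathcal{B}_D\leftrightarrow\KD(D)$. To a Kohnert diagram $K$ I assign the filling of $D$ whose $j$th column has entry set equal to the set of rows occupied by $K$ in column $j$; the southwest hypothesis on $D$ is precisely what guarantees that this filling lies in the standard-monomial index set and survives straightening (it is a nonzero, non-redundant standard monomial). Conversely, reading off the column content sets of a standard monomial produces a diagram with the same column multiset as $D$, which one shows is reachable from $D$ by Kohnert moves. The cleanest way to see that this is a bijection is structural: $\KD(D)$ carries the Kohnert-move graph, $\mathcal{B}_D$ carries the action of the Demazure lowering operators $f_i$ (equivalently, the straightening/lattice-word recursion), both sets are generated from a single source --- $D$ itself, respectively the filling corresponding to $v_D$ --- by these moves, and an induction comparing the two generation procedures, organized along a linear extension of dominance order on weights as in the proof of Proposition~\ref{prop:hbasis}, identifies them.

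The main obstacle I expect is exactly this last identification: that the fillings arising from $\KD(D)$ are neither too many nor too few relative to $\mathcal{B}_D$. ``Not too many'' --- each Kohnert diagram yields a nonzero, linearly independent standard monomial --- is where the southwest property must be used essentially, to preclude the straightening collapses that genuinely occur for general diagrams (where $\kohnert_D$ and the character do diverge). ``Not too few'' --- the monomials coming from $\KD(D)$ already span $\fSchur_D$ --- is most naturally handled by matching a Demazure filtration of $\fSchur_D$, whose successive quotients are Demazure modules with known key-polynomial characters, against a decomposition of $\KD(D)$ into Kohnert sub-crystals, each contributing one key polynomial, and then checking that the two decompositions agree term by term. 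An alternative route that avoids the module theory is to prove that $\kohnert_D$ and $\operatorname{char}\fSchur_D$ satisfy the same recursion under deleting the topmost cell of a column of $D$: Kohnert's rule supplies the recursion on the combinatorial side, while $\fSchur_D$ fits into a short exact sequence of $B$-modules involving $\fSchur_{D'}$ for the smaller diagram $D'$ on the other; one then inducts down to $D=\emptyset$, where both sides equal $1$. There the difficulty migrates into verifying that short exact sequence and that southwest-ness is preserved under the deletion being used.
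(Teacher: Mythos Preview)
The paper does not prove this theorem at all: it is quoted from \cite{AABE23} (after noting it was conjectured in \cite{AS22}) and used as a black box. There is therefore nothing in the paper to compare your proposal against.

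As for the proposal itself, it is not a proof but an outline of several possible strategies, with the hard steps explicitly left open. You correctly locate the difficulty in matching a standard-monomial/Demazure-crystal weight basis of $\fSchur_D$ to $\KD(D)$, and you are right that the southwest hypothesis must enter essentially in the ``not too many'' direction. But the crucial assertions --- that the column-set map from Kohnert diagrams lands in the standard-monomial index set, that it is injective, that its image spans, or alternatively that $\kohnert_D$ and $\operatorname{char}\fSchur_D$ satisfy the same deletion recursion with matching boundary data --- are each stated rather than argued. Any one of these is a substantial theorem in its own right (indeed, the content of \cite{AABE23}), and the sketch does not supply the mechanism by which southwest-ness prevents the straightening collapses you allude to. So while the high-level plan is plausible and well-informed, as written it would not be accepted as a proof; it is closer to a research proposal identifying where the work lies.
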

	Thus, we can show that $\h_{\comp a}$ is the character of a flagged Schur module by showing it is the Kohnert polynomial of a southwest diagram.
	\begin{figure}[ht]
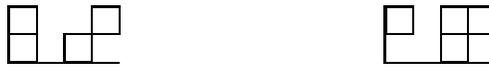

		\begin{displaymath}
			\ytableausetup{centertableaux}
			\vline
			\begin{ytableau}
				~&\none&\none&~\\ ~ & \none&~&\none \\\hline
			\end{ytableau}
			\hspace{100pt}
			\vline
			\begin{ytableau}
				~&\none&~&~\\ \none & \none& ~&\none \\\hline
			\end{ytableau}
		\end{displaymath}
		\caption{\label{fig:diagrams}A southwest diagram (left) and non-southwest diagram (right).}
	\end{figure}
	
	\begin{theorem}\label{thm:character}
		The polynomial $\h_{\comp a}$ is the character of the $B$-module $\fSchur_{D_{\comp a}}$ where
		\[
		D_{\comp a}=\left\{ (c,r)\in\mathbb \mathbb Z_+\times[n] \mid \sum_{k=1}^{r-1}\comp a_k < c\le \sum_{k=1}^{r}\comp a_k \right\}.\]
	\end{theorem}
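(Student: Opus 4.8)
The plan is to reduce to Theorem~\ref{thm:kohnert}: since that theorem applies to southwest diagrams, it suffices to check that $D_{\comp a}$ is southwest and that $\kohnert_{D_{\comp a}}=\h_{\comp a}$. Throughout write $A_r=\comp a_1+\cdots+\comp a_r$, so the cells of $D_{\comp a}$ in row $r$ are exactly those in columns $A_{r-1}+1,\dots,A_r$ (there are $\comp a_r$ of them). The southwest property is immediate, in fact vacuous: a cell in row $r$ lies in a column $\le A_r$, while a cell in any row $s>r$ lies in a column $>A_{s-1}\ge A_r$, so no cell of a higher row is weakly left of a cell of a lower row, and the implication defining ``southwest'' has no instances to verify.

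To compute $\kohnert_{D_{\comp a}}$, note first that $D_{\comp a}$ has at most one cell per column and that Kohnert moves preserve columns; hence every diagram in $\KD(D_{\comp a})$ has exactly one cell in each column $c$ with $1\le c\le A_n$, and since in such a column all positions below the cell are empty, a Kohnert move of that cell simply lowers its row by one. Consequently a diagram $T\in\KD(D_{\comp a})$ is encoded by the tuple $f=(f(1),\dots,f(A_n))$ of row positions of its cells, with $f(c)\le r(c)$ where $r(c)=\min\{r:A_r\ge c\}$ is the row originally containing the cell of column $c$, and $\wt(T)_j=\#\{c:f(c)=j\}$. I claim $T\mapsto f$ identifies $\KD(D_{\comp a})$ with the set of such tuples that are weakly decreasing on each block $B_r=\{A_{r-1}+1,\dots,A_r\}$. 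For necessity, if $c<c'$ lie in a common block then $f(c)\ge f(c')$ is an invariant: it holds at the start (both are $r$), and a legal move lowering the cell of $c$ from a row $j$ requires that cell to be rightmost in row $j$, so the cell of $c'$, lying to its right, is not in row $j$; combined with the invariant it is strictly below row $j$, and the inequality persists after the move (the cases of lowering the cell of $c'$, or of a cell outside $\{c,c'\}$, are similar or trivial). For sufficiency, given such an $f$ build a move sequence by processing the blocks $B_1,B_2,\dots$ in order and, within $B_r$, processing its columns from right to left, each time sliding the cell straight down from row $r$ to row $f(c)$ one step at a time. Every such move is legal: the cells of blocks $B_{r'}$ with $r'>r$ have not yet moved, hence lie both above row $r$ and, as columns, to the right of $B_r$; the cells of blocks with $r'<r$ are already parked but lie to the left of $B_r$; and within $B_r$ the cells of columns $>c$ that are already parked sit, by the weakly decreasing hypothesis, at rows $\le f(c)$, so the cell of $c$ is rightmost in every row it visits on the way down to row $f(c)$.

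Finally, match these tuples with Lemma~\ref{lem:hmatrix}. For $L\in\mathcal L_n$ with $\mathrm{row}(L)=\comp a$, let $f_L$ be the tuple whose restriction to $B_r$ lists in weakly decreasing order the multiset with $L_{r,j}$ copies of $j$ for each $j$; lower triangularity of $L$ makes this multiset supported on $\{1,\dots,r\}$ so that $f_L(c)\le r(c)$, and $|B_r|=\sum_j L_{r,j}=\comp a_r$ makes it well defined. Since a multiset has a unique weakly decreasing arrangement, $L\mapsto f_L$ is a bijection onto the tuples of the previous paragraph, and it is weight preserving because $\#\{c:f_L(c)=j\}=\sum_r L_{r,j}=\mathrm{col}(L)_j$. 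Composing with $T\mapsto f$ and summing, Lemma~\ref{lem:hmatrix} gives $\kohnert_{D_{\comp a}}=\sum_T x^{\wt(T)}=\sum_L x^{\mathrm{col}(L)}=\h_{\comp a}$, and Theorem~\ref{thm:kohnert} then yields the theorem. I expect the delicate point to be the sufficiency half of the block characterization: processing blocks from the top down can strand the cell of a low column behind an already-lowered cell of a higher block, so the bottom-up block order, together with the weakly decreasing condition, is exactly what keeps every required Kohnert move available.
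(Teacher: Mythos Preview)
Your proof is correct and follows essentially the same route as the paper's: verify that $D_{\comp a}$ is southwest, characterize $\KD(D_{\comp a})$ as exactly those one-cell-per-column diagrams whose row values are weakly decreasing on each block $B_r$, and then biject with $\{L\in\mathcal L_n:\mathrm{row}(L)=\comp a\}$ via Lemma~\ref{lem:hmatrix}. The only cosmetic difference is that the paper phrases the bijection as a map $\phi$ from diagrams directly to matrices (reading off, for each block, the multiset of occupied rows) and then extracts the weakly-decreasing property to prove injectivity, whereas you first establish the block characterization of $\KD(D_{\comp a})$ via the tuple $f$ and only afterwards match with matrices; the underlying bijection and the inductive check that the weakly-decreasing condition is a Kohnert invariant are the same.
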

	\begin{figure}[ht]
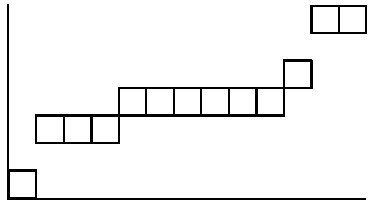

			\vline
			\begin{ytableau}
				\none&\none&\none&\none&\none&\none&\none&\none&\none&\none&\none&~&~\\
				\none\\
				\none&\none&\none&\none&\none&\none&\none&\none&\none&\none&~\\
				\none&\none&\none&\none&~&~&~&~&~&~\\
				\none&~&~&~\\
				\none \\
				~\\
				\hline
			\end{ytableau}
		\caption{\label{fig:bmod}The diagram $D_{(1,0,3,6,1,0,2)}$.}
	\end{figure}
	\begin{proof}
		The diagram is southwest as we see $(d,r),(c,s)\in D_{\comp a}$ with $r<s$ implies $d\le c$. Therefore it suffices to show $\h_{\comp a}=\kohnert_{D_{\comp a}}$ by Theorem \ref{thm:kohnert}. 
		
		We will show there is a bijection 
		\[
		\phi:\KD(D_{\comp a}) \overset{\sim}{\to} \{ L\in\mathcal L_n \mid \mathrm{row}(L)=\comp a \}
		\]
		such that the diagram weight corresponds to the column sums of the matrix. For $T\in\KD(D_{\comp a})$ we define 
		\[
		\phi(T)=\left( \left|\left\{ (c,r)\in T \mid r=j,\ \sum_{k=1}^{i-1}\comp a_k < c\le \sum_{k=1}^{i}\comp a_k  \right\} \right| \right)_{i,j}.
		\]
		There is exactly one cell in each column of $T$ up to $\sum_{k=1}^n \comp a_j$ so for fixed $i$ we have
		\[
		\sum_{j=1}^n \left|\left\{ (c,r)\in T \mid r=j,\ \sum_{k=1}^{i-1}\comp a_k < c\le \sum_{k=1}^{i}\comp a_k  \right\} \right|=\comp a_i.
		\]
		That is, $\mathrm{row}(\phi(T))=\comp a$. Additionally, for $(c,r)\in T$ with $\sum_{k=1}^{i-1}\comp a_k < c\le \sum_{k=1}^{i}\comp a_k$ we know that the only cell in column $c$ of $D_{\comp a}$ is $(c,i)$, which implies $r\le i$. Therefore $\phi(T)$ is always lower triangular, so $\phi$ is well-defined.
		
		Next for fixed $j$ we have
		\[
		\sum_{i=1}^n\left|\left\{ (c,r)\in T \mid r=j,\ \sum_{k=1}^{i-1}\comp a_k < c\le \sum_{k=1}^{i}\comp a_k  \right\} \right|=\left|\left\{ (c,r)\in T \mid r=j\right\} \right|=\wt(T)_j
		\]
		so $\phi$ translates weights to column sums as intended.
		
		We claim that any $T\in\KD(D_{\comp a})$ has the property that if 
		\[
		\left( \sum_{k=1}^{r-1}\comp a_k ,b_1 \right),\left( \sum_{k=1}^{r-1}\comp a_k+1 ,b_2 \right),\ldots,\left( \sum_{k=1}^{r}\comp a_k ,b_{\comp a_r} \right)
		\]
		are the (unique) cells in these columns then $b_1\ge b_2\ge\cdots\ge b_{\comp a_r}$. This is true for $D_{\comp a}$ where $r=b_1=\cdots=b_{\comp a_r}$. Supposing the property holds for some $T\in \KD(D_{\comp a})$, a Kohnert move can only replace a cell $\left( \sum_{k=1}^{r-1}\comp a_k+i ,b_i \right)$ with $\left( \sum_{k=1}^{r-1}\comp a_k+i ,b_i-1 \right)$ since there is only one cell in each column. Even then, this is only possible if there is no cell in the same row to the right of $\left( \sum_{k=1}^{r-1}\comp a_k+i ,b_i \right)$, which is to say $b_i>b_{i+1}$ when $i<\comp a_r$. The property $b_1\ge b_2\ge\cdots\ge b_{\comp a_r}$ then must be maintained under Kohnert moves.
		
		The above implies that for $L\in\mathcal L_n$ with $\mathrm{row}(L)=\comp a$ there is only one diagram of which $L$ can possibly be the image. Namely, we must have that within the columns $\sum_{k=1}^{r-1} \comp a_k,\ldots, \sum_{k=1}^r \comp a_k$ the rightmost $L_{r,1}$ columns must have cells in row $1$, the next rightmost $L_{r,2}$ columns have cells in row $2$, and so on to $L_{r,r}$.
		
		For any such $L$ the diagram just described is a Kohnert diagram. Indeed, starting with the diagram $D_{\comp a}$ and $r=1$, within the columns $\sum_{k=1}^{r-1} \comp a_k,\ldots, \sum_{k=1}^r \comp a_k$ we use Kohnert moves to put the rightmost put the cells in the rightmost $L_{r,1}$ columns into row 1, followed by the next rightmost $L_{r,2}$ columns into row 2, etc. We repeat this procedure for $r=2$ then $r=3$ and so on. These are all valid sequences of Kohnert moves because the cells in columns $\sum_{k=1}^{r-1} \comp a_k,\ldots, \sum_{k=1}^r \comp a_k$ start in a row strictly below any other cell to the right.
		
		Therefore $\phi$ is a bijection which allows us to conclude
		\[
		\kohnert_{D_{\comp a}}=\sum_{\substack{L\in\mathcal L_n\\\mathrm{row}(L)=\comp a}} x^{\mathrm{col}(L)}=\h_{\comp a}
		\]
		by Lemma \ref{lem:hmatrix}.
	\end{proof}

	It follows as a corollary that $\h_{\comp a}$ is a positive sum of key polynomials defined in Section \ref{sec:nonsym-macdonald}. Definitions of peelable tableaux and Yamanouchi Kohnert diagrams can be found in \cite{RS98} and \cite{AABE23} respectively.

	\begin{corollary}
		We have
		\[
		\h_{\comp b}=\sum_{\comp a} \tilde K_{\comp{a}\comp{b}} \key_{\comp a} 
		\]
		where $\tilde K_{\comp a\comp b}$ are non-negative integers. In particular, $\tilde K_{\comp a\comp b}$ is the number of $D_{\comp b}$-peelable tableaux whose left key is $\comp a$, and the number of Yamanouchi Kohnert diagrams of $D_{\comp b}$ with weight $\comp a$.
	\end{corollary}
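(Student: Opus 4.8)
The statement has three parts that need to be linked: (1) $\h_{\comp b}$ expands nonnegatively in key polynomials, (2) the coefficient $\tilde K_{\comp a\comp b}$ counts $D_{\comp b}$-peelable tableaux with left key $\comp a$, and (3) it also counts Yamanouchi Kohnert diagrams of $D_{\comp b}$ with weight $\comp a$. The unifying observation is that Theorem \ref{thm:character} has already identified $\h_{\comp b}=\kohnert_{D_{\comp b}}$ with $D_{\comp b}$ southwest, so all three claims should follow by quoting the appropriate structural results about Kohnert polynomials of southwest diagrams rather than by any fresh computation.

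First I would invoke Theorem \ref{thm:character} to write $\h_{\comp b}=\kohnert_{D_{\comp b}}$, and record that $D_{\comp b}$ is southwest (this is already shown in that proof). For part (3), the cleanest route is to cite the result of \cite{AABE23} (the same source as Theorem \ref{thm:kohnert}) that for a southwest diagram $D$ one has $\kohnert_D=\sum_{\comp a} y_{\comp a}(D)\,\key_{\comp a}$, where $y_{\comp a}(D)$ is the number of Yamanouchi Kohnert diagrams of $D$ of weight $\comp a$; these coefficients are manifestly nonnegative integers, which gives part (1) simultaneously. So I would set $\tilde K_{\comp a\comp b}:=y_{\comp a}(D_{\comp b})$ and observe that parts (1) and (3) are now immediate.

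For part (2), I would appeal to the Reiner–Shimozono theory \cite{RS98}: for a suitable class of diagrams (flagged Schur modules / column-convex or in any case the southwest diagrams covered in \cite{AABE23}), the character $\fSchur_D$ expands as $\sum_{\comp a}(\#\{D\text{-peelable tableaux with left key }\comp a\})\,\key_{\comp a}$. Since $\h_{\comp b}$ is the character of $\fSchur_{D_{\comp b}}$ by Theorem \ref{thm:character}, comparing this with the Kohnert expansion and using linear independence of the $\key_{\comp a}$ forces the peelable count to equal $\tilde K_{\comp a\comp b}$. An alternative, if one prefers not to cite the peelable-tableau Demazure expansion directly, is to cite the bijection between $D$-peelable tableaux with a given left key and Yamanouchi Kohnert diagrams of $D$ with the corresponding weight, which is exactly the content tying \cite{RS98} to \cite{AABE23}.

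The only real obstacle is bookkeeping about hypotheses: I must check that $D_{\comp b}$ lies in whatever class of diagrams the cited theorems actually require (southwest suffices for the Kohnert/Yamanouchi side by Theorem \ref{thm:kohnert}, but the peelability results of \cite{RS98} are stated for %\%-avoiding or column-convex diagrams, so I would verify that $D_{\comp b}$ meets that condition — it does, since each row of $D_{\comp b}$ occupies a contiguous block of columns and the blocks shift weakly leftward as the row index increases). Once the class membership is confirmed, the corollary is a formal consequence of Theorem \ref{thm:character}, Theorem \ref{thm:kohnert}, and the two cited expansion theorems, together with linear independence of key polynomials.
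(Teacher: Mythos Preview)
Your proposal is correct and follows essentially the same approach as the paper: invoke Theorem~\ref{thm:character} to identify $\h_{\comp b}$ as the character of $\fSchur_{D_{\comp b}}$ with $D_{\comp b}$ southwest, then cite \cite[Thm~20]{RS98} and \cite[Cor~4.1.3]{AABE23} for the two combinatorial interpretations. Your extra verification that $D_{\comp b}$ meets the hypotheses of \cite{RS98} is prudent but the paper simply cites the results directly.
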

	\begin{proof}
		Since $\h_{\comp b}$ is the character of $\fSchur_{D_{\comp b}}$ with $D_{\comp b}$ southwest, the two combinatorial interpretations for the coefficients follow immediately from \cite[Thm~20]{RS98} and \cite[Cor~4.1.3]{AABE23} respectively.
	\end{proof}

	In Section \ref{sec:nonsym-macdonald} we provide another combinatorial interpretation for the numbers $\tilde K_{\comp a\comp b}$ that is more closely analogous to the usual model for Kostka coefficients.
	
	%
	\section{Keys, Atoms, and Nonsymmetric Macdonald Polynomials}\label{sec:nonsym-macdonald}
	%
	Macdonald's symmetric functions $P_{\lambda}(x;q,t)$ \cite{Mac88} are symmetric functions indexed by partitions $\lambda$ with rational coefficients in $q,t$. They interpolate the Hall--Littlewood symmetric functions $H_{\lambda}(x;t) = P_{\lambda}(x;0,t)$ and the Jack symmetric functions $J_{\lambda,\alpha}(x) = \lim_{t \rightarrow 1} P_{\lambda}(x;t^{\alpha},t)$. They further specialize to the classical bases \[m_\lambda = P_\lambda(x;q,1),\, s_\lambda = P_{\lambda}(x;q,q), \text{ and }e_{\lambda'}=P_\lambda (x;1,t)\]
	regardless of the values of the free parameters.
	
	The nonsymmetric Macdonald polynomials $E_a(x_1,\ldots,x_n;q,t)$ were introduced by Opdam \cite{Opd95} and Macdonald \cite{Mac96}. They recover their symmetric analogues by
	\begin{equation}
		P_{\lambda}(x_1,\ldots,x_n;q,t)=\frac{\prod_{u\in\D(\comp a)}1-q^{\mathrm{leg}(u)+1}t^{\mathrm{arm}(u)+1}}{\prod_{u\in\D(\rev(\lambda))}1-q^{\mathrm{leg}(u)}t^{\mathrm{arm}(u)+1}}E_{0^n\times\comp a}(x_1,\ldots,x_n;q,t)
	\end{equation}
	where $\comp a$ is any rearrangement of $\lambda$, and $\mathrm{arm}$ and $\mathrm{leg}$ are defined below \cite[Cor~5.2.2]{HHL08}. Haglund, Haiman and Loehr gave a combinatorial formula for the monomial expansion of nonsymmetric Macdonald polynomials as follows \cite{HHL08}.
	
	Given a weak composition $\comp a$, its \newword{key diagram} is 
	\begin{equation}
		\D(\comp a)=\{ (c,r)\in\mathbb Z_+\times[n] \mid c\le \comp a_r \}.
	\end{equation} 
	A \newword{filling} of a diagram is a map from the diagram to $[n]$. The \newword{weight} $\wt(T)$ of a filling $T$ is the weak composition defined by $\wt_i(T)=|T^{-1}(i)|$. When $T$ is a filling of a key diagram the \newword{shape} $\sh(T)$ of $T$ is the corresponding weak composition.
	 
	Given a filling $T:\D(\comp a)\to\mathbb [n]$ of $\D(\comp a)$, define the \newword{augmented filling} $\hat T$ on $\hat\D(\comp a)=\D(\comp a)\sqcup\{ (0,1),\ldots,(0,n) \}$ by
	\[
	\hat T(u)=\begin{cases}
		T(u) &\text{if } u\in\D(\comp a)\\
		i&\text{if } u=(0,i)
	\end{cases}.
	\]
	The cells $\{ (0,1),\ldots,(0,n) \}$ are called the \newword{basement}.
	
	A pair of cells \newword{attack} each other if they lie in the same column or in adjacent columns with the cell on the left strictly higher than the cell on the right. A filling $T$ is \newword{non-attacking} if no pair of attacking cells have the same value in $T$.
	
	The \newword{arm} $\mathrm{arm}(u)$ of a cell $u\in\D(\comp a)$ is the number of cells below $u$ in the same column plus the number of cells above $u$ in the left adjacent column of $\hat\D(\comp a)$. The \newword{leg} $\mathrm{leg}(u)$ of a cell $u\in\D(\comp a)$ is the number of cells weakly to its right in the same row. Given a non-attacking filling $T$, the \newword{major index} of $T$, denoted by $\maj(T)$, is the sum of the legs of all cells $u$ such that $T(u)$ is strictly greater than the entry of the cell immediately to its left. Similarly, the \newword{co-major index}, denoted by $\comaj(T)$, is the sum of the legs of all cells $u$ such that $T(u)$ is strictly less than the entry immediately left of $u$. The conditions $\maj(T)=0$ and $\comaj(T)=0$, which will concern us, respectively say that the entries of $T$ are weakly decreasing and increasing within rows from left to right.
	
	A \newword{triple} of a non-attacking filling $T$ of $\D(\comp a)$ is three cells with distinct entries of either the form
	\begin{description}
		\item[Type I] $(r,c),(r,c+1),(s,c)\in\D(\comp a)$ with $r<s$ and $\comp a_r>\comp a_s$, or
		\item[Type II] $(r,c+1),(s,c),(s,c+1)\in\D(\comp a)$ with $r<s$ and $\comp a_r\le \comp a_s$.
	\end{description}
	Say that $i,j,k$ are the entries of a triple where $k$ is left of $i$ in the same row. The triple is a \newword{co-inversion triple} if $i<j<k$ or $j<k<i$ or $k<i<j$. An \newword{inversion triple} satisfies one of the reverse inequalities. The Type I and Type II co-inversion triples are illustrated in Fig.~\ref{fig:inv}. Let $\coinv(T)$ (resp. $\inv(T)$) denote the number of co-inversion (resp. inversion) triples of $T$.

	\begin{figure}[ht]
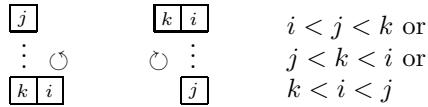

		\begin{displaymath}
			\begin{array}{l}
				\tableau{ j } \\[-0.5\cellsize] \hspace{0.4\cellsize} \vdots \hspace{0.5\cellsize} \circlearrowleft \\ \tableau{ k & i }
			\end{array}\hspace{2\cellsize}
			\begin{array}{r}
				\tableau{ k & i } \\[-0.5\cellsize] \circlearrowright \hspace{0.5\cellsize} \vdots \hspace{0.4\cellsize} \\ \tableau{ j }
			\end{array}\hspace{2\cellsize}
			\begin{array}{l}
				i < j < k \text{ or} \\
				j < k < i \text{ or} \\
				k < i < j
			\end{array}
		\end{displaymath}
		\caption{\label{fig:inv}The positions and orientation for co-inversion triples.}
	\end{figure}

	\begin{remark}
		Note that our definition of inversion triples is \textit{not} equivalent to the one used in \cite{HHL08} and elsewhere. The authors allow their inversion triples to contain repeat entries and use $\coinv'$ to denote the statistic we call $\inv$.
	\end{remark}

	\begin{remark}\label{rem:inv}
		We will only care about co-inversion triples when the major index is zero, and inversion triples when the co-major index is zero. In these cases, the only realizable inequalities given one of the arrangements in Figure \ref{fig:inv} are $i<j<k$ and $i>j>k$ respectively.
	\end{remark}
	
	The combinatorial identity is now as follows.
	\begin{theorem}[\cite{HHL08}]\label{thm:HHL}
		The nonsymmetric Macdonald polynomial is given by
		\[ E_{a}(x;q,t) =\hspace{-0.3cm} \sum_{\substack{T:\D(\comp a)\rightarrow [n] \\ \hat T\mathrm{non-attacking}}}\hspace{-0.3cm} q^{\maj(\hat T)} t^{\coinv(\hat T)} x^{\wt(T)} \hspace{-0.5cm}
		\prod_{\substack{u\in \D(\comp a)\\ \hat T(u) \neq \hat T(\mathrm{left}(u))}} \hspace{-0.5cm} \frac{1-t}{1 - q^{\mathrm{leg}(u)+1} t^{\mathrm{arm}(u)+1}} . \]
		
		where $\mathrm{left}(u)$ is the cell immediately left of $u$ in $\hat D(\comp a)$. Additionally,
		\[ E_{a}(x;q^{-1},t^{-1}) =\hspace{-0.3cm} \sum_{\substack{T:\D(\comp a)\rightarrow [n] \\ \hat T\mathrm{non-attacking}}} \hspace{-0.3cm} q^{\comaj(\hat T)} t^{\inv(\hat T)} x^{\wt(T)} \hspace{-0.5cm}
		\prod_{\substack{u\in \D(\comp a)\\ \hat T(u) \neq \hat T(\mathrm{left}(u))}}\hspace{-0.5cm} \frac{1-t}{1 - q^{\mathrm{leg}(u))+1} t^{\mathrm{arm}(u)+1}} . \]
		\label{thm:mac}
		
	\end{theorem}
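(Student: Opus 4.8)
The plan is to follow the strategy of \cite{HHL08}: the nonsymmetric Macdonald polynomials $E_{\comp a}$ are the unique family satisfying the Knop--Sahi recursions, so I would define $C_{\comp a}(x;q,t)$ to be the right-hand side of the claimed formula for $E_{\comp a}(x;q,t)$ and verify that $C_{\comp a}$ obeys those same recursions. Concretely they are: a base case $E_{0^n}=1$; a cyclic \emph{raising} recursion of the form $E_{\Phi\comp a}(x_1,\dots,x_n;q,t)=q^{\comp a_n}x_1\,E_{\comp a}(x_2,\dots,x_n,q^{-1}x_1;q,t)$ with $\Phi\comp a=(\comp a_n+1,\comp a_1,\dots,\comp a_{n-1})$; and, for $\comp a_i\neq\comp a_{i+1}$, a Demazure--Lusztig recursion expressing $E_{s_i\comp a}$ as $T_i$ applied to $E_{\comp a}$ plus a correction term that is rational in $q,t$ with exponents read off from the arm and leg of a distinguished cell, where $T_i=t+\tfrac{tx_i-x_{i+1}}{x_i-x_{i+1}}(s_i-1)$. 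Since any $\comp a$ is reached from $0^n$ by a sequence of $\Phi$'s and simple transpositions, verifying these three identities for $C_{\comp a}$ forces $C_{\comp a}=E_{\comp a}$.

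The base case is immediate: $\D(0^n)$ is empty, so the unique non-attacking filling of $\hat\D(0^n)$ is the basement alone, contributing $x^{0}=1$. For the raising recursion, I would observe that $\D(\Phi\comp a)$ is $\D(\comp a)$ with its rows cyclically permuted and one extra cell appended at the right end of the new bottom row, and build a bijection between the non-attacking fillings of the two shapes that cyclically relabels entries and reads off the (forced) value of the new cell; one then checks by direct inspection that the induced change in $\maj$, $\coinv$ and the arm/leg product is exactly the factor $q^{\comp a_n}x_1$ after the variable substitution $x_j\mapsto x_{j+1}$, $x_1\mapsto q^{-1}x_1$. This is the ``add a column'' manipulation of \cite{HHL08} and is a finite case-check with no real difficulty.

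The crux is the Demazure--Lusztig recursion, and this is where I expect the main obstacle. The approach would be to organize the non-attacking fillings of $\D(\comp a)$ and of $\D(s_i\comp a)$ according to the configuration of cells in rows $i$ and $i+1$, and within each group exhibit a weight-matching pairing of fillings that realizes the rational operator $T_i$ term by term: fillings fixed by the swap of rows $i,i+1$ account for the diagonal contribution of $T_i$, and the remaining fillings pair up to produce the off-diagonal part. The delicate points are (a) that a triple straddling rows $i$ and $i+1$ can switch between inversion and co-inversion type under the swap, so $\coinv$ shifts and must be matched against the $q,t$-exponents of the correction term; (b) that the factors $\tfrac{1-t}{1-q^{\mathrm{leg}(u)+1}t^{\mathrm{arm}(u)+1}}$ involve arm values depending on the heights $\comp a_i,\comp a_{i+1}$, and it is precisely the split of triples into Type~I ($\comp a_r>\comp a_s$) and Type~II ($\comp a_r\le\comp a_s$) that makes these factors transform consistently when $\comp a_i$ and $\comp a_{i+1}$ are interchanged; and (c) that the non-attacking constraint across the column boundary must be preserved by the pairing. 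Carrying this out requires separating the cases $\comp a_i<\comp a_{i+1}$ and $\comp a_i>\comp a_{i+1}$ and, in each, isolating the self-paired fillings from the rest.

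Finally, the second formula would follow by running the same argument on the image of the Knop--Sahi recursions under $q\mapsto q^{-1}$, $t\mapsto t^{-1}$: under that substitution the correction terms change shape and the co-major index and inversion triples take over the roles played by the major index and co-inversion triples, so it suffices to check that $\sum_T q^{\comaj(\hat T)}t^{\inv(\hat T)}x^{\wt(T)}\prod_u\tfrac{1-t}{1-q^{\mathrm{leg}(u)+1}t^{\mathrm{arm}(u)+1}}$ satisfies the transformed recursions, which is carried out by the same case analysis; no subtlety about repeated entries in triples arises, since the cells of a triple in a non-attacking filling are automatically distinctly labeled.
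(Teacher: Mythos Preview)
The paper does not prove this theorem at all: it is quoted verbatim from \cite{HHL08} and used as a black box, so there is no ``paper's own proof'' to compare your proposal against. Your outline is in fact a faithful sketch of the original Haglund--Haiman--Loehr argument (characterize $E_{\comp a}$ by the Knop--Sahi recursions, then check the combinatorial sum satisfies the base case, the cyclic raising operator $\Phi$, and the Demazure--Lusztig operator $T_i$), so in that sense you are reproducing the cited proof rather than offering an alternative.

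One genuine slip: your last sentence asserts that ``the cells of a triple in a non-attacking filling are automatically distinctly labeled.'' This is false. In a Type~I triple $(c,r),(c+1,r),(c,s)$ the two cells $(c,r)$ and $(c+1,r)$ lie in the same row, and same-row adjacent cells do \emph{not} attack, so they may carry the same value in a non-attacking filling (and similarly for Type~II). The present paper sidesteps this by \emph{defining} a triple to require three distinct entries, which is exactly why the Remark after the definitions warns that the statistic called $\inv$ here is HHL's $\coinv'$ and not their ``inversion triple'' count. If you re-run the recursion argument for the second formula using this paper's conventions you must keep track of that discrepancy; in \cite{HHL08} the second formula is instead obtained as a short algebraic corollary of the first (substitute $q\mapsto q^{-1}$, $t\mapsto t^{-1}$ and clear denominators), which avoids the issue entirely.
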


	The \newword{Ferrers diagrams} of a partition $\lambda$ are $\D(\lambda)$ and $\D(\rev(\lambda))$. The former is known as the French convention and the latter is the English convention. We use whichever is convenient.
	
	\begin{definition}
		A \newword{semi-standard Young tableau} (SSYT) is a filling of a Ferrers diagram in the French convention such that column entries strictly increase bottom to top and row entries weakly increase left to right. The set of SSYT is denoted $\SSYT$.
	\end{definition}

	This leads to the combinatorial definition of \newword{Schur polynomials}:
	\begin{equation*}
		s_\lambda(x_1,\ldots, x_n)=\sum_{\comp a\in\mathbb N^n} K_{\lambda\comp a}x^{\comp a}
	\end{equation*}
	where
	\[
	K_{\lambda\comp a}=|\{ Q\in\SSYT \mid \sh(Q)=\lambda, \wt(Q)=\comp a  \}|
	\]
	are the classical \newword{Kostka coefficients}. It is well-known that $K_{\lambda\comp a}$ depends only on $\lambda$ and $\sort(\comp a)$. The stable limit of a Schur polynomial as $n\to\infty$ is the \newword{Schur function} $s_\lambda$ with notation completely analogous to $h_\lambda$. The Schur polynomials and complete homogeneous symmetric polynomials are related by the expansion
	\[
	h_\mu(x_1,\ldots,x_n) = \sum_{\lambda} K_{\lambda\mu}  s_\lambda (x_1,\ldots,x_n)
	\]
	summed over partitions. This is a consequence of the RSK algorithm which is reviewed in Section \ref{sec:rsk}.
	
	The Schur polynomials are generalized by the \newword{key polynomials} $\key_{\comp a}$ indexed by weak compositions. These were first introduced by Demazure and are also known as Demazure characters \cite{Dem74}. They coincide with the characters of $\fSchur_{D(\comp a)}$. A simple consequence of Sanderson's relation between Macdonald polynomials and affine Demazure characters \cite{San00} is that nonsymmetric Macdonald polynomials specialize to key polynomials, which was also shown combinatorially by Assaf \cite{Ass18}.
	
	\begin{theorem}[\cite{San00}]\label{thm:key_mac}
		The key polynomial $\key_\comp a(x)$ is given by
		\begin{equation}\label{eq:key}
			\key_\comp a(x) = E_{\comp a}(x;0,0).
		\end{equation}
		
	\end{theorem}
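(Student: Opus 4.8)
The plan is to read the specialization off the Haglund--Haiman--Loehr formula (Theorem~\ref{thm:HHL}) and then identify the resulting combinatorics with $\key_\comp a$. Setting $q=t=0$ in the first expression for $E_\comp a(x;q,t)$: the factor $q^{\maj(\hat T)}$ becomes the indicator of $\maj(\hat T)=0$, the factor $t^{\coinv(\hat T)}$ becomes the indicator of $\coinv(\hat T)=0$, and each factor $\frac{1-t}{1-q^{\mathrm{leg}(u)+1}t^{\mathrm{arm}(u)+1}}$ tends to $1$, because $\mathrm{arm}(u)+1\ge 1$ makes the monomial $q^{\mathrm{leg}(u)+1}t^{\mathrm{arm}(u)+1}$ vanish at the origin while $1-t$ tends to $1$ (each summand is a rational function regular at $q=t=0$, so evaluating term by term is valid). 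Using Remark~\ref{rem:inv} and the preceding paragraph, the surviving terms are exactly the fillings $T\colon\D(\comp a)\to[n]$ such that $\hat T$ is non-attacking with weakly decreasing rows and no co-inversion triples, giving
\[
E_\comp a(x;0,0)=\sum_{\substack{T\colon\D(\comp a)\to[n]\\ \hat T\ \text{non-attacking},\ \maj(\hat T)=\coinv(\hat T)=0}} x^{\wt(T)}=:N_\comp a.
\]

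Next I would compare $N_\comp a$ with $\key_\comp a$. Since $\D(\comp a)$ is southwest (if $(d,r),(c,s)\in\D(\comp a)$ with $c<d$ and $r<s$, then $c<d\le\comp a_r$, so $(c,r)\in\D(\comp a)$), the identification of $\key_\comp a$ as the character of $\fSchur_{\D(\comp a)}$ together with Theorem~\ref{thm:kohnert} gives $\key_\comp a=\kohnert_{\D(\comp a)}$. One route is then a weight-preserving bijection between the fillings counted by $N_\comp a$ and $\KD(\D(\comp a))$: each cell of $\D(\comp a)$ keeps its column and moves to the row recorded by its value (note that weakly decreasing rows force a cell in row $r$ to have value at most $r$, matching the downward direction of Kohnert moves, and that non-attacking forces distinct values in each column), and one checks that non-attacking plus the absence of co-inversion triples is exactly what makes the corresponding diagram reachable from $\D(\comp a)$ by Kohnert moves, by an argument in the spirit of the bijection $\phi$ in the proof of Theorem~\ref{thm:character}. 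A second route is to verify that $N_\comp a$ obeys the recursion characterizing key polynomials: when $\comp a$ is weakly decreasing, the column supports of $\D(\comp a)$ are initial segments $\{1,\dots,m\}$, so the non-attacking and flag conditions ($T(c,r)\le r$) force the unique filling $T(c,r)=r$, whence $N_\comp a=x^\comp a=\key_\comp a$; and when $\comp a_i<\comp a_{i+1}$ one would show $N_{s_i\comp a}=\pi_i N_\comp a$ for the $i$th Demazure operator $\pi_i$, via a crystal-style bijection on non-attacking fillings.

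I expect the main obstacle to be precisely this last identification --- whichever route is taken, making rigorous how the ``topmost vacant position'' rule of Kohnert moves (equivalently, the action of $\pi_i$, equivalently a crystal lowering operator) interacts with the non-attacking condition and the vanishing of co-inversion triples, and confirming that the specialization yields the key polynomial rather than the (smaller) Demazure atom. A useful guardrail: the single-row case reproduces $\key_{(k,0,\dots,0)}=x_1^k$ and the two-row case reproduces $\key_{(0,k,0,\dots,0)}=h_k(x_1,x_2)$ (the atom would drop the term $x_1^k$), so already small examples distinguish the correct target and indicate that the general correspondence goes through.
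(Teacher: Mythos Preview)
The paper does not actually prove this theorem: it is quoted from Sanderson \cite{San00}, with the remark that a combinatorial proof was also given by Assaf \cite{Ass18}. So there is no ``paper's own proof'' to compare against; what you have written is a sketch of how one might redo \cite{Ass18}.

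Your first paragraph is correct and matches the paper exactly: the $q=t=0$ specialization of the HHL formula yields the generating function over fillings with $\hat T$ non-attacking and $\maj(\hat T)=\coinv(\hat T)=0$, which is precisely the paper's definition of $\SSKT$. So you have reduced the statement to
\[
\key_\comp a \;=\; \sum_{T\in\SSKT(\comp a)} x^{\wt(T)},
\]
which is the real content of the theorem and is \emph{not} proved in this paper.

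For the identification step, both of your proposed routes are viable in principle, but neither is close to complete as written. The Kohnert route requires the nontrivial fact $\kohnert_{\D(\comp a)}=\key_\comp a$ (this is Kohnert's original claim, made rigorous only later) together with a bijection $\SSKT(\comp a)\leftrightarrow\KD(\D(\comp a))$; your ``send each cell to the row given by its value'' map is well-defined on columns by the non-attacking condition, but showing the image is exactly $\KD(\D(\comp a))$ is the whole difficulty and is not analogous to the easy $\phi$ of Theorem~\ref{thm:character} (there the columns were disjoint and independent; here they are not). The recursion route --- checking $N_{s_i\comp a}=\pi_i N_\comp a$ --- is the standard argument, but the ``crystal-style bijection'' you gesture at is again the substance of \cite{Ass18} and is not a triviality. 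Your small-case checks are fine as sanity checks but of course do not substitute for either argument.
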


	A reverse partition yields the Schur polynomial:
	\begin{equation}
		s_\lambda(x_1,\ldots, x_n)=\key_{\rev(\lambda)}.
	\end{equation}
	
	Let $t_{i,j}$ denote the transposition exchanging $i$ and $j$ with $i<j$. For a permutation $w=w_1\cdots w_k$ let $\ell(w)$ denote the number of pairs $i<j$ such that $w_i>w_j$. \newword{Bruhat order} on the symmetric group is defined to be the transitive closure of the cover relation $w\le wt_{i,j}$ whenever $\ell(w t_{i,j}) = \ell(w) + 1$.
	
	The \newword{Demazure atoms} $\atom_\comp a(X)$ were originally studied under the name standard bases by Lascoux and Schützenberger \cite{LS90}. They are a monomial positive $\mathbb Z$-basis for polynomials characterized by
	\[
	\key_{\comp a}=\sum_{\comp b\le \comp a} \atom_{\comp b}
	\]
	where $\comp b\le\comp a$ means there exists a partition $\lambda$ and permutations $\sigma\le \tau$ such that $\comp b_i=\lambda_{\sigma(i)}$ and $\comp a_i=\lambda_{\tau(i)}$ for all $i$. See \cite{P16} for an overview. Mason showed that the Demazure atoms are also specializations of nonsymmetric Macdonald polynomials.
	
	\begin{theorem}[\cite{Mas09}]\label{thm:atom_mac}
		The Demazure atom $\atom_\comp a(x)$ is given by
		\begin{equation}\label{eq:atom}
			\atom_a(x) = E_{\rev(\comp a)}(x_n,x_{n-1},\ldots,x_1;\infty,\infty).
		\end{equation}
	\end{theorem}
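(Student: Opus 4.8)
The plan is to read \eqref{eq:atom} off of the second combinatorial identity in Theorem~\ref{thm:HHL}. Substituting $\comp a\mapsto\rev(\comp a)$ in that identity and then replacing each $x_i$ by $x_{n+1-i}$ turns its left-hand side into $E_{\rev(\comp a)}(x_n,\ldots,x_1;q^{-1},t^{-1})$ and its right-hand side into
\[
\sum_{\substack{T:\D(\rev(\comp a))\to[n]\\ \hat T\text{ non-attacking}}} q^{\comaj(\hat T)}\,t^{\inv(\hat T)}\,x^{\rev(\wt(T))}\prod_{\substack{u\in\D(\rev(\comp a))\\ \hat T(u)\neq\hat T(\mathrm{left}(u))}}\frac{1-t}{1-q^{\mathrm{leg}(u)+1}t^{\mathrm{arm}(u)+1}}.
\]
I would then let $q\to 0$ and $t\to 0$. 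Since $\mathrm{leg}(u)+1\ge 1$ for every cell, no denominator vanishes at the origin, so each summand is regular there, the finite sum is continuous, and the limit may be taken term by term: a summand survives exactly when $\comaj(\hat T)=0$ and $\inv(\hat T)=0$, in which case the product tends to $1$ and it contributes $x^{\rev(\wt(T))}$. Thus the target quantity $E_{\rev(\comp a)}(x_n,\ldots,x_1;\infty,\infty)$ equals the generating function $\sum x^{\rev(\wt(T))}$ over non-attacking fillings $T$ of $\D(\rev(\comp a))$ with $\comaj(\hat T)=0$ and $\inv(\hat T)=0$.

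The second step is to identify this generating function with $\atom_{\comp a}$. By Remark~\ref{rem:inv}, the condition $\comaj(\hat T)=0$ forces the rows of $\hat T$ to weakly increase from left to right, and then $\inv(\hat T)=0$ forbids precisely the triples realizing $i>j>k$. I would bridge to the atom side by reversing the alphabet, sending each entry $v$ to $n+1-v$ and making the corresponding reindexing of rows and basement so that the underlying shape becomes $\hat\D(\comp a)$: this converts ``weakly increasing rows'' into ``weakly decreasing rows,'' the inversion-triple condition into the co-inversion-triple condition (with Type~I and Type~II interchanged), and $x^{\rev(\wt(T))}$ into the monomial $x^{\wt(T')}$ of the relabelled filling $T'$. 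The claim is then that the resulting set of fillings is in weight-preserving bijection with the objects Mason uses in \cite{Mas09} to enumerate $\atom_{\comp a}$ (her semistandard augmented fillings), which yields \eqref{eq:atom}.

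I expect essentially all of the difficulty to be concentrated in that reindexing step. Reversing the alphabet and reflecting the diagram interchanges ``strictly above'' with ``strictly below,'' so it does not obviously respect the left/right-sensitive attacking relation or the arm/leg statistics appearing in the triple conditions; one must verify by hand that the non-attacking condition and the Type~I and Type~II co-inversion conditions, together with the behaviour of the basement, come out exactly as required. As a sanity check that the setup is calibrated correctly, running the same recipe on the \emph{first} identity of Theorem~\ref{thm:HHL} at $q=t=0$, with no reversals at all, collapses the product to $1$ and recovers $\key_{\comp a}=E_{\comp a}(x;0,0)$ as the generating function of non-attacking fillings of $\D(\comp a)$ with $\maj(\hat T)=0$ and $\coinv(\hat T)=0$; so it is precisely the reversal of composition and variables in \eqref{eq:atom} that cuts the key model down to the smaller atom model.
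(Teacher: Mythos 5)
The paper does not prove this theorem: it is stated with a citation to \cite{Mas09} and used as a black box, so there is no internal proof to compare against. Your plan is to specialize the second identity of Theorem~\ref{thm:HHL} at $q,t\to 0$ after substituting $\comp a\mapsto\rev(\comp a)$ and $x_i\mapsto x_{n+1-i}$, and then to match the surviving fillings against Mason's combinatorial model for $\atom_{\comp a}$. The first half of this is correct and is exactly the specialization the paper itself invokes (in the proof of Theorem~\ref{thm:key-expand}) to write $E_{\comp a}(x;\infty,\infty)$ as a generating function of reverse SSAF, so you end up with $E_{\rev(\comp a)}(x_n,\ldots,x_1;\infty,\infty)$ as a sum of $x^{\rev(\wt(T))}$ over $T\in\rSSAF$ of shape $\rev(\comp a)$.

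The second half --- the ``reindexing step'' you rightly flag --- is not a formality but the entire mathematical content, and it is left undone. Reversing rows and alphabet does \emph{not} cleanly swap Type~I with Type~II: Type~I has the singleton cell in column $c$ and the pair in the lower, strictly longer row, while Type~II has the singleton in column $c+1$ and the pair in the upper, weakly longer row, so under $r\mapsto n+1-r$ the triple-positions of a Type~I triple do not land on the Type~II template (the singleton ends up in the wrong column), and the strict/weak inequalities do not match either. Likewise the attacking relation, being defined by ``left strictly higher,'' is not preserved under the row flip. Mason's original SSAF are built with the mirror-image conventions precisely so that the equivalence the paper asserts (``up to reversal of the alphabet and shape,'' citing \cite{Mas08}) holds, but writing out that dictionary and checking it against Definition~\ref{def:snake-attack}'s style of conditions is exactly what remains to be done. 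As it stands your argument also reduces the theorem to Mason's combinatorial atom formula rather than deriving it ab initio, which is legitimate but should be stated as a hypothesis. In short: plausible plan, correct specialization step, but the bijection --- where you say ``one must verify by hand'' --- is where the proof lives and it has not been carried out.
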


	Theorem \ref{thm:HHL} along with the specializations \eqref{eq:key} and \eqref{eq:atom} motivate the following definitions.
	
	\begin{definition}[\cite{Ass18}]
		A \newword{semi-standard key tableau} (SSKT) is a filling $T$ of a key diagram such that $\hat T$ is non-attacking and $\maj(\hat T)=\coinv(\hat T)=0$. The set of SSKT is denoted $\SSKT$.
	\end{definition}
	
	\begin{definition}
		A \newword{reverse semi-skyline augmented filling} (reverse SSAF) is a filling $T$ of a key diagram such that $\hat T$ is non-attacking and $\comaj(\hat T)=\inv(\hat T)=0$. The set of reverse SSAF is denoted $\rSSAF$.
	\end{definition}

	\begin{figure}[ht]
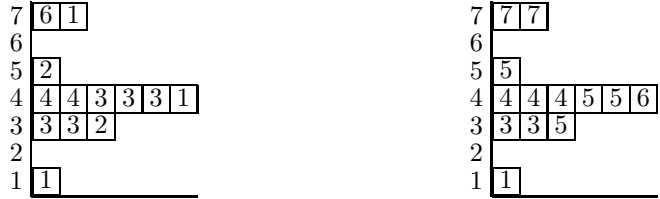

		\begin{displaymath}
			\ytableausetup{centertableaux,boxsize=1em}
			\begin{ytableau}
				\none[7]\\\none[6]\\\none[5]\\\none[4]\\\none[3]\\\none[2]\\\none[1]\\
			\end{ytableau}
			\vline
			\begin{ytableau}
				6&1\\
				\none \\
				2\\
				4&4&3&3&3&1\\
				3&3&2\\
				\none\\
				1\\
				\hline
			\end{ytableau}
			\hspace{100pt}
			\begin{ytableau}
				\none[7]\\\none[6]\\\none[5]\\\none[4]\\\none[3]\\\none[2]\\\none[1]\\
			\end{ytableau}
			\vline
			\begin{ytableau}
				7&7\\
				\none \\
				5\\
				4&4&4&5&5&6\\
				3&3&5\\
				\none\\
				1\\
				\hline
			\end{ytableau}
		\end{displaymath}
		\caption{\label{fig:fillings}An SSKT (left) and a reverse SSAF (right) of shape $(1,0,3,6,1,0,2)$.}
	\end{figure}

	The reverse SSAF are equivalent to Mason's semi-skyline augmented fillings up to reversal of the alphabet and shape \cite{Mas08}.
	
	What follow are some simple facts about SSKT that will find their use in later sections.
	\begin{lemma}\label{lem:ledge}
		If row $r$ of an SSKT is weakly shorter than row $s$ with $r<s$ then any entry in row $r$ is strictly less than the entry in the same column of row $s$.
	\end{lemma}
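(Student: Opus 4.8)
The plan is to induct on the column index, sweeping from left to right. It is convenient to treat the basement as a ``$0$th column'' and prove the slightly stronger statement that $\hat T(c,r) < \hat T(c,s)$ for every $c$ with $0 \le c \le \comp a_r$; the lemma is then the case range $1 \le c \le \comp a_r$. The base case $c = 0$ is free, since $\hat T(0,r) = r < s = \hat T(0,s)$.

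For the inductive step, I would assume $\hat T(c,r) < \hat T(c,s)$ with $0 \le c < \comp a_r$ and look at the three cells $(r,c+1),(s,c),(s,c+1)$. They all lie in $\hat\D(\comp a)$ because $c+1 \le \comp a_r \le \comp a_s$, and since $r < s$ and $\comp a_r \le \comp a_s$ they form a Type II configuration, with entries $k = \hat T(s,c)$, $j = \hat T(r,c+1)$, $i = \hat T(s,c+1)$ in the notation of Figure~\ref{fig:inv}. The engine of the argument is the hypothesis $\maj(\hat T) = 0$, i.e.\ weakly decreasing rows: it gives both $k \ge i$ and $\hat T(c+1,r) \le \hat T(c,r)$, and the second inequality combined with the inductive hypothesis yields $j = \hat T(c+1,r) \le \hat T(c,r) < \hat T(c,s) = k$, so $j < k$.

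The key step is then a short case split. If $k = i$, the row-$s$ entries in columns $c$ and $c+1$ coincide, there is no genuine triple, and one concludes directly that $\hat T(c+1,r) = j < k = i = \hat T(c+1,s)$. If $k > i$, then $i,j,k$ are pairwise distinct ($j < k$ is already known, and $j \ne i$ by the non-attacking condition in column $c+1$), so this is a genuine Type II triple; since $\coinv(\hat T) = 0$ it is not a co-inversion triple, hence none of $i<j<k$, $j<k<i$, $k<i<j$ holds, and as $j<k$ the only surviving order is $j<i<k$, giving $\hat T(c+1,r) = j < i = \hat T(c+1,s)$. (One may also invoke Remark~\ref{rem:inv} here, using $\maj(\hat T)=0$ to eliminate two of the three co-inversion patterns outright.) This closes the induction.

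The step I expect to need the most care is the bookkeeping around the triple rather than any serious difficulty: the entries of row $s$ in columns $c$ and $c+1$ can be equal, so one cannot invoke a triple at every column, and even when they differ one must know the ``free'' entry $j=\hat T(r,c+1)$ is distinct from $k=\hat T(s,c)$ before applying the co-inversion condition. Both points are handled by the same observation, namely that weakly decreasing rows together with the inductive hypothesis force $\hat T(c+1,r) \le \hat T(c,r) < \hat T(c,s)$, which at once disposes of the degenerate case and supplies $j<k$. The only other thing to watch is that for $c=0$ the triple involves the basement cell $(s,0)$, which is legitimate since triples and the statistics $\maj$, $\coinv$ are taken on the augmented diagram $\hat\D(\comp a)$.
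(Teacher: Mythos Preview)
Your proof is correct and follows essentially the same approach as the paper's: induction on the column index starting from the basement, using the Type~II triple on the cells $(c,s),(c+1,s),(c+1,r)$ together with $\maj(\hat T)=0$ and $\coinv(\hat T)=0$ to push the strict inequality one column to the right. The paper packages the inductive step as a contradiction (assume the inequality fails, obtain the co-inversion pattern $j'<i'<j$), whereas you run a direct case split on whether the two row-$s$ entries coincide; your version is slightly more explicit about the degenerate case $k=i$, but the content is the same.
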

	\begin{proof}
		This is true in the basement column. Suppose in some column we have entries $i<j$ in rows $r,s$ respectively. If there are entries $i',j'$ immediately right of $i,j$ respectively then we must have $i'<j'$ else $j'<i'<j$ would be a Type II co-inversion triple. We are done by induction.
	\end{proof}

	\begin{lemma}\label{lem:flat}
		Suppose row $r$ is strictly longer than row $s$ in an SSKT $S$ with $r<s$. Suppose for some column $c>1$ we have $S(c,r)<S(c,s)$. Then $S(c-1,r)<S(c-1,s)$ as well.
	\end{lemma}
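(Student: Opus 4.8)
The plan is to argue by contradiction and exhibit a forbidden Type I co-inversion triple. Assume, toward a contradiction, that $S(c-1,r)\ge S(c-1,s)$. Since the cells $(c-1,r)$ and $(c-1,s)$ lie in the same column and $\hat S$ is non-attacking, these two entries cannot be equal, so in fact $S(c-1,r)>S(c-1,s)$. This is the only place the non-attacking hypothesis is used.

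Next I would verify that the three cells $(c-1,r)$, $(c,r)$, $(c-1,s)$ form a Type I triple for $\comp a=\sh(S)$. Because $S(c,r)$ and $S(c,s)$ exist we have $c\le\comp a_r$ and $c\le\comp a_s$, hence $c-1\le\comp a_s<\comp a_r$, the strict inequality coming from row $r$ being strictly longer than row $s$; combined with $c>1$ this places all three cells in $\D(\comp a)$, and the hypotheses $r<s$ and $\comp a_r>\comp a_s$ are exactly the Type I conditions, with $(c-1,r)$ immediately left of $(c,r)$ in row $r$. Now I read off the entries: $\maj(\hat S)=0$ forces weak decrease along row $s$, so $S(c-1,s)\ge S(c,s)$, and with the hypothesis $S(c,s)>S(c,r)$ this gives $S(c-1,s)>S(c,r)$. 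Chaining in the assumed inequality $S(c-1,r)>S(c-1,s)$ yields $S(c,r)<S(c-1,s)<S(c-1,r)$, three pairwise distinct values, which in the notation of Figure \ref{fig:inv} (with $k=S(c-1,r)$ left of $i=S(c,r)$ in row $r$, and $j=S(c-1,s)$ the cell above) is precisely the pattern $i<j<k$ of a co-inversion triple. This contradicts $\coinv(\hat S)=0$, and therefore $S(c-1,r)<S(c-1,s)$.

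I expect the only delicate point to be the bookkeeping in the middle step: making sure the three cells genuinely belong to $\D(\comp a)$ and that their entries are pairwise distinct, so that what we have is an honest triple rather than a degenerate configuration. Once those checks are dispatched, the inequality chase is immediate and uses nothing beyond $\maj(\hat S)=\coinv(\hat S)=0$ together with the non-attacking condition in column $c-1$.
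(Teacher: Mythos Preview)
Your proof is correct and follows exactly the paper's approach: assume the conclusion fails, use the non-attacking condition to get strict inequality $S(c-1,r)>S(c-1,s)$, use $\maj(\hat S)=0$ along row $s$ to obtain $S(c-1,s)\ge S(c,s)>S(c,r)$, and conclude that $S(c,r)<S(c-1,s)<S(c-1,r)$ is a Type~I co-inversion triple. The paper's proof is the one-line version of what you wrote; your additional bookkeeping (checking the cells lie in $\D(\comp a)$ and that the entries are distinct) is correct and fills in details the paper leaves implicit.
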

	\begin{proof}
		If not then $S(c,r)<S(c-1,s)<S(c-1,r)$ creates a Type I co-inversion triple.
	\end{proof}
	
	Lascoux \cite{Las03} discovered the Cauchy-like identity
	\begin{equation}\label{eq:cauchy}
		\prod_{i+j\le n+1}(1-x_iy_j)^{-1}=\sum_{\comp{a}\in \mathbb N^n}\atom_{\rev(\comp a)}(x)\key_{\comp a}(y)
	\end{equation}
	which was generalized to classical types by Fu and Lacscoux \cite{FL09}. More recently, Choi and Kwon \cite{CK18} gave a crystal theoretic interpretation and proof. Azenhaus and Emami \cite{AE15} also generalized the result to truncated staircase shapes using Mason's translation of RSK to skyline fillings \cite{Mas09}.
	
	Reversing the alphabet $x_1,\ldots,x_n$ and applying Theorems \ref{thm:key_mac} and \ref{thm:atom_mac}, we get \eqref{eq:cauchy} in the following equivalent form.
	\begin{theorem}[\cite{Las03}]\label{thm:cauchy}
		We have
		\begin{equation}
			\prod_{1\le j\le i\le n}(1-x_iy_j)^{-1}=\sum_{\comp{a}\in \mathbb N^n}E_{\comp a}(x;\infty,\infty)E_{\comp a}(y;0,0).
		\end{equation}
	\end{theorem}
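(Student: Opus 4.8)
The plan is to obtain Theorem~\ref{thm:cauchy} directly from Lascoux's identity~\eqref{eq:cauchy} by a single change of alphabet, using the Macdonald specializations recorded in Theorems~\ref{thm:key_mac} and~\ref{thm:atom_mac}. Concretely, in~\eqref{eq:cauchy} I would replace the alphabet $(x_1,\ldots,x_n)$ by its reverse, i.e. substitute $x_i\mapsto x_{n+1-i}$ for each $i$, while leaving $(y_1,\ldots,y_n)$ untouched, and then read off what each side becomes.

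On the left-hand side the substitution turns $\prod_{i+j\le n+1}(1-x_iy_j)^{-1}$ into $\prod_{i+j\le n+1}(1-x_{n+1-i}y_j)^{-1}$; reindexing $i'=n+1-i$, the constraint $i+j\le n+1$ becomes $j\le i'$, so the product is exactly $\prod_{1\le j\le i\le n}(1-x_iy_j)^{-1}$. On the right-hand side, Theorem~\ref{thm:atom_mac} gives $\atom_{\rev(\comp a)}(x_n,\ldots,x_1)=E_{\comp a}(x_1,\ldots,x_n;\infty,\infty)$, so $\atom_{\rev(\comp a)}$ evaluated at the reversed alphabet is precisely $E_{\comp a}(x;\infty,\infty)$, while Theorem~\ref{thm:key_mac} identifies $\key_{\comp a}(y)=E_{\comp a}(y;0,0)$. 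Substituting both into~\eqref{eq:cauchy} yields the claimed identity. The only real obstacle is bookkeeping: one must track the two reversals hidden in Theorem~\ref{thm:atom_mac} (the $\rev$ on the composition index and the reversal of the variables) so that they compose to the identity, and apply the shift $i+j\le n+1\leftrightarrow j\le n+1-i$ consistently; there is no further content.

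A more self-contained argument, which also foreshadows Section~\ref{sec:rsk}, would avoid citing~\eqref{eq:cauchy} and instead build a bijection. Expanding each geometric factor, the left-hand side equals $\sum_{L\in\mathcal L_n} x^{\mathrm{row}(L)}y^{\mathrm{col}(L)}$ — the same computation as in Lemma~\ref{lem:hmatrix}, now recording both the row and the column sums of $L$. On the other side, the $q=t=0$ case of the first formula in Theorem~\ref{thm:HHL} gives $E_{\comp a}(y;0,0)=\sum_{S\in\SSKT,\ \sh(S)=\comp a} y^{\wt(S)}$, and letting $q,t\to\infty$ in the second formula gives $E_{\comp a}(x;\infty,\infty)=\sum_{T\in\rSSAF,\ \sh(T)=\comp a} x^{\wt(T)}$, since the factors $\tfrac{1-t}{1-q^{\cdots}t^{\cdots}}$ specialize to $1$ and the monomials $q^{\maj}t^{\coinv}$, $q^{\comaj}t^{\inv}$ survive only when the relevant statistics vanish. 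Thus it suffices to produce a weight-preserving bijection $\fRSK\colon \mathcal L_n \xrightarrow{\ \sim\ } \bigsqcup_{\comp a}\{(T,S): T\in\rSSAF,\ S\in\SSKT,\ \sh(T)=\sh(S)=\comp a\}$ with $\wt(T)=\mathrm{row}(L)$ and $\wt(S)=\mathrm{col}(L)$. Here the difficulty is genuine: one must design a key-tableau insertion-and-recording rule — an analogue of Mason's skyline RSK~\cite{Mas08} — under which the lower-triangularity $L_{ij}=0$ for $i<j$ is exactly the condition forcing the recording tableau $T$ into $\rSSAF$ while keeping its shape equal to that of the insertion tableau $S\in\SSKT$. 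Checking that the shapes stay equal and that this flag constraint is preserved at every insertion step is where all the work lies, and is the content of Section~\ref{sec:rsk}.
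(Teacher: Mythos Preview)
Your proposal is correct and mirrors the paper precisely on both fronts. The change-of-alphabet derivation from~\eqref{eq:cauchy} via Theorems~\ref{thm:key_mac} and~\ref{thm:atom_mac} is exactly how the paper arrives at the statement of Theorem~\ref{thm:cauchy}, and your bijective sketch---expanding the product as $\sum_{L\in\mathcal L_n}x^{\mathrm{row}(L)}y^{\mathrm{col}(L)}$, interpreting the right side via Theorem~\ref{thm:HHL} as a sum over $\fpairs_n$, and invoking a weight-preserving bijection $\fRSK$---is precisely the paper's formal proof at the end of Section~\ref{sec:rsk}, resting on Theorem~\ref{thm:bijection}.
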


	In Section \ref{sec:rsk} we reprove this fact by describing a bijection, equivalent to RSK, between lower triangular $\mathbb N$ matrices and \[\{(T,S)\in \SSKT\times\rSSAF \mid \sh(S)=\sh(T) \}.\] The section can be skipped without loss of continuity.
	
	The main new result for the current section is now a new interpretation for the coefficients $\tilde K_{\comp a\comp b}$.
	
	\begin{theorem}\label{thm:key-expand}
		The homogeneous complete flagged polynomials expand into key polynomials by
		\[
		\h_{\comp b}=\sum_{\comp a}\tilde K_{\comp a\comp b}\key_{\comp a}
		\]
		where
		\[
		\tilde K_{\comp a\comp b}=|\{ T\in \rSSAF\mid \sh(T)=\comp a,\ \wt(T)=\comp b\}|.
		\]
	\end{theorem}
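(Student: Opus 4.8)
The strategy is to combine the non-symmetric Cauchy identity of Theorem \ref{thm:cauchy} with the Haglund--Haiman--Loehr formula of Theorem \ref{thm:HHL} and the matrix description of $\h_{\comp b}$ from Lemma \ref{lem:hmatrix}. First I would observe that by Lemma \ref{lem:hmatrix},
\[
\sum_{\comp b} \h_{\comp b}(y)\, x^{\comp b} \;=\; \sum_{L\in\mathcal L_n} x^{\mathrm{row}(L)} y^{\mathrm{col}(L)} \;=\; \prod_{1\le j\le i\le n}(1-x_iy_j)^{-1},
\]
the last equality being the standard generating-function identity for lower-triangular $\mathbb N$-matrices (expand each geometric series $\prod_{j\le i}(1-x_iy_j)^{-1} = \sum_{L} \prod_{j\le i} (x_iy_j)^{L_{ij}}$, reading off $\mathrm{row}(L)$ from the $x$-exponents and $\mathrm{col}(L)$ from the $y$-exponents). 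Then Theorem \ref{thm:cauchy} rewrites the right-hand side as $\sum_{\comp a} E_{\comp a}(x;\infty,\infty)\,E_{\comp a}(y;0,0)$.

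Next I would extract the coefficient of $x^{\comp b}$ from both sides. On the left this is exactly $\h_{\comp b}(y)$. On the right, $E_{\comp a}(y;0,0) = \key_{\comp a}(y)$ by Theorem \ref{thm:key_mac}, so it remains to identify the coefficient of $x^{\comp b}$ in $E_{\comp a}(x;\infty,\infty)$. Here I apply Theorem \ref{thm:HHL} in its $E_{\comp a}(x;q^{-1},t^{-1})$ form and take the limit $q,t\to\infty$: the product factor $\prod \frac{1-t}{1-q^{\mathrm{leg}(u)+1}t^{\mathrm{arm}(u)+1}}$ tends to $1$, and the monomial $q^{\comaj(\hat T)}t^{\inv(\hat T)}$ survives into a finite limit only when $\comaj(\hat T)=\inv(\hat T)=0$, killing all other terms. (One should be slightly careful about the order/manner of taking $q,t\to\infty$; the clean way is to note that after clearing the product the surviving fillings are precisely those with $\comaj=\inv=0$, which is exactly the defining condition of $\rSSAF$.) Thus the coefficient of $x^{\comp b}$ in $E_{\comp a}(x;\infty,\infty)$ equals $|\{T\in\rSSAF \mid \sh(T)=\comp a,\ \wt(T)=\comp b\}|$, which is the claimed $\tilde K_{\comp a\comp b}$.

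Finally I would remark that the resulting formula $\h_{\comp b}=\sum_{\comp a}\tilde K_{\comp a\comp b}\key_{\comp a}$ must agree with the one from the Corollary after Theorem \ref{thm:character}, since $\{\key_{\comp a}\}$ is a basis; so the two descriptions of $\tilde K_{\comp a\comp b}$ (Yamanouchi Kohnert diagrams / peelable tableaux on one hand, $\rSSAF$ on the other) count the same integers.

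**Main obstacle.** The delicate point is the $q,t\to\infty$ specialization of the HHL formula: one must argue that the rational product factor really does tend to $1$ and that no cancellation or divergence occurs among the terms, so that only the $\comaj=\inv=0$ fillings contribute. This is essentially the content of Theorem \ref{thm:atom_mac}'s specialization $\atom_{\comp a}(x)=E_{\rev(\comp a)}(x_n,\ldots,x_1;\infty,\infty)$ combined with the relation between $\rSSAF$ and Mason's skyline fillings; the cleanest route is to cite Theorem \ref{thm:atom_mac} together with the definition of $\rSSAF$ rather than redo the limit by hand, after checking that the alphabet/shape reversal in Theorem \ref{thm:atom_mac} is compatible with the symmetric roles of $x$ in the Cauchy kernel $\prod_{j\le i}(1-x_iy_j)^{-1}$.
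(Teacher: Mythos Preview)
Your proposal is correct and follows essentially the same route as the paper: expand the Cauchy kernel $\prod_{j\le i}(1-x_iy_j)^{-1}$ both as $\sum_{\comp b}\h_{\comp b}(y)\,x^{\comp b}$ via Lemma~\ref{lem:hmatrix} and as $\sum_{\comp a}E_{\comp a}(x;\infty,\infty)\,\key_{\comp a}(y)$ via Theorems~\ref{thm:cauchy} and~\ref{thm:key_mac}, then extract the coefficient of $x^{\comp b}$ using the HHL formula to identify $E_{\comp a}(x;\infty,\infty)$ as the $\rSSAF$ generating function. The ``delicate'' limit you flag is in fact routine: in the second identity of Theorem~\ref{thm:HHL} one simply sets $q=t=0$ (so that $q^{-1}=t^{-1}=\infty$), making each product factor equal to $1$ and killing every term with $\comaj>0$ or $\inv>0$; no genuine limiting argument is needed.
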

	\begin{proof}
		We have
		\[
		\prod_{1\le j\le i\le n}(1-x_iy_j)^{-1}=\sum_{\comp{a}\in \mathbb N^n}E_{\comp a}(x;\infty,\infty) E_{\comp a}(y;0,0)=\sum_{\comp{a}\in \mathbb N^n}E_{\comp a}(x;\infty,\infty) \key_{\comp a}(y)
		\]
		using Theorem \ref{thm:cauchy} for the first equality and \ref{thm:key_mac} for the second. Note that the left hand side is
		\[
		\sum_{A \text{ lower triangular }\mathbb{N}-\text{matrix}} x^{\mathrm{row}(A)}y^{\mathrm{col}(A)}
		\]    
		so its $x^{\comp b}$ coefficient is $\h_{\comp b}(y)$ by Lemma \ref{lem:hmatrix}.
		
		We know from Theorem \ref{thm:mac} that
		\[
		E_{\comp a}(x;\infty,\infty)= \sum_{\substack{T\in \rSSAF\\ \sh(T)=\comp a}}x^{\wt(T)}.
		\]
		Then using Theorem \ref{thm:HHL} to take the coefficient of $x^{\comp b}$ in $\sum_{\comp{a}\in \mathbb N^n}E_{\comp a}(x;\infty,\infty) \key_{\comp a}(y)$ we get \[\sum_{\substack{T\in\rSSAF\\\wt(T)=\comp b }} \key_{\sh(T)}(y)=\sum_{\comp a} \tilde K_{\comp a\comp b}\key_{\comp a}\]
		with $\tilde K_{\comp a\comp b}$ as described.
	\end{proof}

	Given this combinatorial interpretation of $\tilde K_{\comp a\comp b}$, Mason \cite{Mas08} showed bijectively that these numbers are related to classical Kostka coefficents by
	\begin{equation}
	K_{\lambda\comp b}=\sum_{\sort(\comp a)=\lambda} \tilde K_{\comp a\comp b}.
	\end{equation}
	This should not be too surprising as one can also consider taking the expansion
	\[
	\h_{\comp b}=\sum_{\comp a}\tilde K_{\comp a\comp b}\key_{\comp a}
	\]
	to its symmetric stable limit, an argument that one can make more precise by using Lemma \ref{lem:iso-subspaces} to see the coefficients don't change when they are suitably identified under the limiting operation. The bijection used by Mason is described explicitly in Section \ref{sec:rsk}.
	
	Since key polynomials are a positive sum of Demazure atoms, Theorem \ref{thm:key-expand} implies the same is true for $\h_{\comp a}$. We can directly find the coefficients using the same tools as before.
	\begin{theorem}
		We have
		\[
		\h_{\comp b}=\sum_{\comp a}\tilde K^{\comp a\comp b}\atom_{\comp a}
		\]
		where
		\[
		\tilde K^{\comp a\comp b}=|\{ T\in \SSKT\mid \sh(T)=\rev(\comp a),\ \wt(T)=\rev(\comp b)\}|.
		\]
	\end{theorem}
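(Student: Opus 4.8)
The plan is to run the argument of Theorem~\ref{thm:key-expand} ``on the other side'': instead of extracting a monomial in $x$ from Theorem~\ref{thm:cauchy} and leaving $\key_{\comp a}(y)$ intact, I would extract a monomial in $y$ from Lascoux's identity in the form \eqref{eq:cauchy}, where a Demazure atom already sits on the $x$-side. Concretely, combining Theorem~\ref{thm:key_mac} with Theorem~\ref{thm:HHL} at $q=t=0$ gives the tableau generating function $\key_{\comp a}(y)=E_{\comp a}(y;0,0)=\sum_{T\in\SSKT,\ \sh(T)=\comp a}y^{\wt(T)}$, parallel to the identity $E_{\comp a}(x;\infty,\infty)=\sum_{T\in\rSSAF,\ \sh(T)=\comp a}x^{\wt(T)}$ used in the proof of Theorem~\ref{thm:key-expand}. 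Taking the coefficient of the monomial $y^{\comp b}$ on the right-hand side of \eqref{eq:cauchy} therefore produces $\sum_{\comp a}\bigl|\{T\in\SSKT:\sh(T)=\comp a,\ \wt(T)=\comp b\}\bigr|\,\atom_{\rev(\comp a)}(x)$.

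The other half is to compute the coefficient of $y^{\comp b}$ on the left-hand side of \eqref{eq:cauchy}. Expanding each geometric factor gives $\prod_{i+j\le n+1}(1-x_iy_j)^{-1}=\sum_M x^{\mathrm{row}(M)}y^{\mathrm{col}(M)}$, the sum over $\mathbb N$-matrices $M$ supported on the staircase $\{(i,j):i+j\le n+1\}$, so the coefficient of $y^{\comp b}$ is $\sum_{\mathrm{col}(M)=\comp b}x^{\mathrm{row}(M)}$. I would then observe that reversing the column order, $M\mapsto(M_{i,\,n+1-j})_{i,j}$, is a weight-preserving bijection from staircase-supported matrices with column sums $\comp b$ onto upper-triangular matrices with column sums $\rev(\comp b)$ and the same row sums; transposing and invoking Lemma~\ref{lem:hmatrix} then identifies this coefficient with $\h_{\rev(\comp b)}(x)$.

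Equating the two computations gives $\h_{\rev(\comp b)}=\sum_{\comp a}\bigl|\{T\in\SSKT:\sh(T)=\comp a,\ \wt(T)=\comp b\}\bigr|\,\atom_{\rev(\comp a)}$. To finish I would substitute $\rev(\comp b)$ for $\comp b$ and reindex the sum by $\comp a\mapsto\rev(\comp a)$ -- both legitimate since reversal is an involution on weak compositions of length at most $n$ -- which turns the identity into $\h_{\comp b}=\sum_{\comp a}\tilde K^{\comp a\comp b}\atom_{\comp a}$ with $\tilde K^{\comp a\comp b}=|\{T\in\SSKT:\sh(T)=\rev(\comp a),\ \wt(T)=\rev(\comp b)\}|$, exactly the claimed formula.

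No new combinatorial ideas are needed beyond those in Theorem~\ref{thm:key-expand}; the only thing requiring care is the bookkeeping of reversals. There are three independent sources of a $\rev(\cdot)$: the index of the atom relative to its nonsymmetric Macdonald specialization (Theorem~\ref{thm:atom_mac}, already folded into \eqref{eq:cauchy}), the column-reversal bijection on staircase matrices, and the final substitution $\comp b\mapsto\rev(\comp b)$ together with the reindexing of the atom sum. Keeping these consistent -- so that precisely the combination giving $\sh(T)=\rev(\comp a)$ and $\wt(T)=\rev(\comp b)$ survives -- is the main (and essentially only) subtlety.
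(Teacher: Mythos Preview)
Your argument is correct and is essentially the same as the paper's: both extract a monomial coefficient from Lascoux's Cauchy identity after expanding $\key_{\comp a}$ via the $\SSKT$ formula. The only difference is packaging: where the paper swaps $x\leftrightarrow y$ and then reverses the $x$-alphabet so that the left side becomes $\prod_{1\le j\le i\le n}(1-x_iy_j)^{-1}$ and the $x^{\comp b}$-coefficient is directly $\h_{\comp b}(y)$, you instead keep \eqref{eq:cauchy} as written, take the $y^{\comp b}$-coefficient, and achieve the same reversal by the explicit column-flip/transpose bijection on staircase matrices (landing on $\h_{\rev(\comp b)}$ and then substituting $\comp b\mapsto\rev(\comp b)$); these two manipulations are equivalent.
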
 
	\begin{proof}
		The left hand side of \eqref{eq:cauchy} is unchanged by interchanging the $x$'s and $y$'s. So
		\[
		\prod_{i+j\le n+1}(1-x_iy_j)^{-1}=\sum_{\comp{a}\in \mathbb N^n}\key_{\comp a}(x)\atom_{\rev(\comp a)}(y).
		\]
		Reversing the alphabet $x_1,\ldots,x_n$ and applying Theorem \ref{thm:key_mac} we get
		\[
		\begin{aligned}
		\prod_{1\le j\le i\le n}(1-x_iy_j)^{-1}&=\sum_{\comp{a}\in \mathbb N^n}\key_{\comp a}(x_n,\ldots,x_1)\atom_{\rev(\comp a)}(y_1,\ldots,y_n)\\
		&=\sum_{\comp{a}\in \mathbb N^n}E_{\comp a}(x_n,\ldots,x_1;0,0)\atom_{\rev(\comp a)}(y_1,\ldots,y_n).
		\end{aligned}
		\]
		The coefficient of $x^{\comp b}$ on the left is $\h_{\comp b}$ as before. Using Theorem \ref{thm:HHL} the coefficient of $x^{\comp b}$ on the right is
		\[
		\sum_{\substack{T\in\SSKT\\\wt(T)=\rev(\comp b)}} \atom_{\rev(\sh(T))}(y_1,\ldots,y_n)=\sum_{\comp a}\tilde K^{\comp a\comp b}\atom_\comp a(y_1,\ldots,y_n)
		\]
		which is therefore equated to $\h_{\comp b}(y_1,\ldots,y_n)$.
	\end{proof}
	Here also there is a relation to the classical Kostka coefficients. Namely
	\begin{equation}
		K_{\lambda\comp b}=\tilde K^{\lambda\comp b}
	\end{equation}
	for a partition $\lambda$. In short, this is because $s_\lambda(x_1,\ldots,x_n)=\key_{\rev(\lambda)}$.
	
	%
	\section{Flagged RSK}\label{sec:rsk}
	%
	The celebrated RSK correspondence associates to each $\mathbb N$-matrix a pair of SSYT of the same shape in such a way that the weights of the SSYT correspond to the column and row sums of the matrix. Myriad resources such as \cite{Sta99} and \cite{Ful97} give an in-depth introduction to the bijection. Mason gave an equivalent characterization of RSK using the combinatorics of semi-skyline augmented fillings \cite{Mas08}, which was generalized by Haglund, Mason and Remmel \cite{HMR13}. Azenhas and Emami \cite{AE15} used Mason's algorithm to prove \eqref{eq:cauchy}, as a special case, which is ultimately our application as well. 
	
	Our RSK analogue is equivalent to the restriction of the full correspondence to lower triangular matrices, and is therefore more specialized than Mason's. In our context it is consequently simpler and more uniquely suited. In comparison to Mason who associates to each matrix a pair of semi-skyline augmented fillings whose shapes are rearrangements of each other, we put the lower triangular matrices into explicit bijection with pairs of SSKT and reverse SSAF that share a shape. Just as the RSK algorithm shows how the monomial expansion of a complete symmetric function factors through the Schur expansion, our ``flagged'' RSK specialization is shows how the monomial expansion of $\h_{\comp a}$ factors through the key expansion.
	
	Up until this point there has been an implicit dependence on a fixed value $n$, particularly in the definitions of our combinatorial objects $\SSYT,\SSKT,\rSSAF$. In this section it will help to make the dependence explicit, so we instead use the notation $\SSYT_n,\SSKT_n,\rSSAF_n$.
	
	There is one more combinatorial object we must introduce in order to continue.
	\begin{definition}
		A \textbf{reverse semi-standard Young tableau} (reverse SSYT) is a filling of a Ferrers diagram in the French convention with entries such that columns strictly decrease bottom to top and rows weakly decrease left to right. The set of reverse SSYT with entries in $[n]$ is denoted $\rSSYT_n$.
	\end{definition}

	For any set $\SSYT_n,\rSSYT_n,\rSSAF_n,\SSKT_n$ of tableau-like objects we may append an argument restricting the shape. For instance
	\[
	\SSKT_n(\comp a)=\{ S\in\SSKT_n\mid \sh(T)=\comp a \}.
	\]
	
	First we review the full RSK correspondence. Given a reverse SSYT $P$ and positive integer $j$, we define $P\leftarrow j$ to be the reverse SSYT obtained by the following procedure.
	\begin{enumerate}
		\item Let $r=1$.
		\item Place $j$ in the leftmost position in the $r$th row from the bottom not occupied by a weakly larger entry, removing the entry $j'$ that occupies the position if necessary.
		\item If an entry $j'$ was indeed removed from the row, go back to step 2 replacing the values $j$ with $j'$, and $r$ with $r+1$.
	\end{enumerate}

	This insertion algorithm is the central component of RSK. We also need to know how to interpret an $\mathbb N$-matrix in $\mathcal M_n$ as a \newword{biword}, which is a multiset in the alphabet $\left\{ \begin{pmatrix} i\\ j \end{pmatrix} \mid i,j\in[n] \right\}$. A matrix $A$ corresponds to the biword whose number of elements $\begin{pmatrix} i\\ j \end{pmatrix}$ is equal to the $i,j$ entry of $A$. We represent a biword as a two-line array $\begin{pmatrix} i_1&i_2&\cdots&i_\ell\\ j_1&j_2&\cdots&j_\ell \end{pmatrix}$ where $\begin{pmatrix} i_k\\ j_k \end{pmatrix}$ are the elements of the biword, $i_k\le i_{k+1}$, and if $i_k=i_{k+1}$ then $j_k\ge j_{k+1}$.
	
	Let
	\begin{equation}
		\pairs_n=\{ (P,Q)\in\rSSYT_n\times \SSYT_n \mid \sh(P)=\sh(Q)  \}.
	\end{equation}
	Given $(P,Q)\in \pairs_n$ define
	\[
	(P,Q)\leftarrow \begin{pmatrix} i\\ j \end{pmatrix} = (P\leftarrow j, Q')
	\]
	where $Q'$ is obtained from $Q$ by adding the box $\sh(P\leftarrow j)\setminus\sh(P)$ with entry $i$.
	
	Then we define 
	\[
	A \overset{\RSK}{\mapsto} \left(\cdots\left(\left((\emptyset,\emptyset)\leftarrow \begin{pmatrix} i_1\\ j_1 \end{pmatrix} \right)\leftarrow \begin{pmatrix} i_2\\ j_2 \end{pmatrix} \right)\cdots\leftarrow \begin{pmatrix} i_\ell\\ j_\ell \end{pmatrix}\right)
	\]
	where $A$ is the matrix corresponding to the biword $\begin{pmatrix} i_1&i_2&\cdots&i_\ell\\ j_1&j_2&\cdots&j_\ell \end{pmatrix}$. This is a bijection from $\mathcal M_n$ to $\pairs_n$ which differs from the usual construction only in the superficial way that we have replaced $\SSYT\times\SSYT$ with $\rSSYT\times\SSYT$. When $A \overset{\RSK}{\mapsto} (P,Q)$ we call $P$ the \newword{insertion tableau} and $Q$ the \newword{recording tableau}. By construction, $\wt(P)$ and $\wt(Q)$ are the column and row sums respectively of the corresponding matrix.

	As an example, the tableaux in Figure \ref{fig:RSK} are the image of
	\[
	\setcounter{MaxMatrixCols}{13}
	\begin{pmatrix} 1&3&3&4&4&4&5&5&5&5&6&7&7\\1&3&2&4&3&1&4&4&3&2&1&6&3  \end{pmatrix}.
	\]
	The final insertion computation $\leftarrow 3$ is shown in Figure \ref{fig:classical-insertion}.
	\begin{figure}[ht]
		\begin{ytableau}
			1\\
			2\\
			3&1\\
			4&3&2\\
			6&4&3&3&3&1
		\end{ytableau}\hspace{100pt}
		\begin{ytableau}
		7\\
		5\\
		4&7\\
		3&4&5\\
		1&3&4&5&5&6
	\end{ytableau}
	\caption{\label{fig:RSK} A pair of fillings in $\pairs_n$.}
	\end{figure}

	\begin{figure}[ht]
		\begin{tikzpicture}
			\node at (0,0) (A) {
				\begin{ytableau}
					1\\
					2\\
					3\\
					4&3&1\\
					6&4&3&3&2&1&\none[\ \leftarrow]&\none[3]
				\end{ytableau}
			};
			
			\node at (3.2,0) (A) {
				\begin{ytableau}
					1\\
					2\\
					3\\
					4&3&1&\none&\none&\none&\none[\ \leftarrow]&\none[2]\\
					6&4&3&3&*(green)3&1
				\end{ytableau}
			};
			\node at (6.4,0) (A) {
				\begin{ytableau}
					1\\
					2\\
					3&\none&\none&\none&\none&\none&\none[\ \leftarrow]&\none[1]\\
					4&3&*(green)2\\
					6&4&3&3&3&1
				\end{ytableau}
			};
			\node at (9.6,0) (A) {
				\begin{ytableau}
					1\\
					2\\
					3&*(green)1\\
					4&3&2\\
					6&4&3&3&3&1
				\end{ytableau}
			};
		\end{tikzpicture}
		\caption{\label{fig:classical-insertion}The insertion procedure $\leftarrow3$.}
	\end{figure}
	
	Before describing the flagged RSK algorithm, let us see how its proposed image \begin{equation}
		\fpairs_n=\{ (S,T)\in \SSKT_n\times \rSSAF_n \mid \sh(S)=\sh(T) \}
	\end{equation}
	embeds into $\pairs_n$.
	
	\begin{definition}[\cite{Mas08}]
		The map
		\[
		\rho:\rSSAF_n\to\SSYT_n
		\]
		takes $T\in\rSSAF_n$ and yields the unique SSYT $\rho(T)$ whose $i$th column consists of the same set of entries as does the $i$th column of $T$.
	\end{definition}

	The map is clearly weight-preserving. It is also a well-defined bijection where $\rho^{-1}(Q)$ is constructed column by column from left to right by placing each entry in the corresponding column of $Q$, from smallest to largest, into the topmost available position immediately right of a weakly lesser entry (allowing the basement column into consideration) \cite{Mas08}. See Figure \ref{fig:embed}.

	\begin{definition}[\cite{ASch18}]
		The map
		\[
		\tau:\SSKT_n\to\rSSYT_n
		\]
		takes $S\in \SSKT_n$ and yields the unique reverse SSYT $\tau(S)$ whose $i$th column consists of the same set of entries as does the $i$th column of $S$.
		
		Let 
		\[
		\tau_{\comp a}:\SSKT_n(\comp a)\to\rSSYT_n(\sort(\comp a))
		\] denote the restriction of $\tau$.
	\end{definition}
	In contrast to $\rho$, $\tau$ is not a bijection, but $\tau_{\comp a}$ is a well-defined embedding. The latter has the left inverse $\tau_{\comp a}^\dagger$ which takes $P\in \rSSYT_n(\sort(\comp a))$ and yields a filling $\tau^\dagger_{\comp a}(P):\D(\comp a)\to [n]$ defined column by column from right to left and bottom to top, at each cell selecting the smallest remaining entry in the column set that maintains the decreasing row condition \cite{ASch18}. The filling $\tau^\dagger_{\comp a}(P)$ is an SSKT exactly when $P$ is in the image of $\tau_{\comp a}$.  Once again, see Figure \ref{fig:embed}.
	
	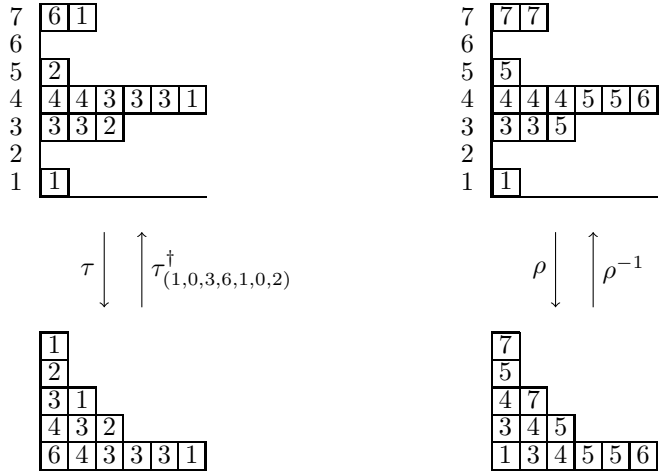
\begin{figure}[ht]
		\begin{tikzpicture}
			\node at (0,0) (A) {
			\begin{ytableau}
				\none[7]\\\none[6]\\\none[5]\\\none[4]\\\none[3]\\\none[2]\\\none[1]\\
			\end{ytableau}
			\vline
			\begin{ytableau}
				6&1\\
				\none \\
				2\\
				4&4&3&3&3&1\\
				3&3&2\\
				\none\\
				1\\
				\hline
			\end{ytableau}
			};
		
			\node at (6,0) (B) {
			\begin{ytableau}
				\none[7]\\\none[6]\\\none[5]\\\none[4]\\\none[3]\\\none[2]\\\none[1]\\
			\end{ytableau}
			\vline
			\begin{ytableau}
				7&7\\
				\none \\
				5\\
				4&4&4&5&5&6\\
				3&3&5\\
				\none\\
				1\\
				\hline
			\end{ytableau}
			};
		
			\node at (0.25,-4) (C) {
				\begin{ytableau}
					1\\
					2\\
					3&1\\
					4&3&2\\
					6&4&3&3&3&1
				\end{ytableau}
			};
		
			\node at (6.25,-4) (D) {
				\begin{ytableau}
					7\\
					5\\
					4&7\\
					3&4&5\\
					1&3&4&5&5&6
				\end{ytableau}
			};
		
			\draw [-to] (0,-1.75)--(0,-2.75) node[midway,left]{$\tau$};
			\draw [-to] (0.5,-2.75)--(0.5,-1.75) node[midway,right]{$\tau^\dagger_{(1,0,3,6,1,0,2)}$};
			\draw [-to] (6,-1.75)--(6,-2.75) node[midway,left]{$\rho$};
			\draw [-to] (6.5,-2.75)--(6.5,-1.75) node[midway,right]{$\rho^{-1}$};
		\end{tikzpicture}
		\caption{\label{fig:embed}An illustration of the maps $\tau$ and $\rho$.}
	\end{figure}

	Abusing notation, we let $\tau\times\rho$ refer to the actual product map restricted to $\fpairs_n\to\pairs_n$. This is our embedding.

	\begin{lemma}\label{lem:subset}
		The map 
		\[
		\tau\times\rho:\fpairs_n\to\pairs_n
		\]
		is well-defined and injective with the left inverse
		\[
		(P,Q)\mapsto (\tau^\dagger_{\sh(\rho^{-1}(Q))}(P), \rho^{-1}(Q)).
		\]
		.
	\end{lemma}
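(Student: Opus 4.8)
The plan is to verify three things in sequence: that $\tau \times \rho$ actually lands in $\pairs_n$, that the proposed formula is a left inverse, and that injectivity follows. Since $\rho$ is already known to be a weight-preserving bijection $\rSSAF_n \to \SSYT_n$ (from Mason's work, recalled above) and $\tau_{\comp a}$ is a well-defined embedding $\SSKT_n(\comp a) \to \rSSYT_n(\sort(\comp a))$ with left inverse $\tau^\dagger_{\comp a}$, the individual pieces are in hand. The content is therefore almost entirely bookkeeping about shapes.

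First I would check well-definedness. Take $(S,T) \in \fpairs_n$, so $\sh(S) = \sh(T) =: \comp a$. Then $\tau(S) = \tau_{\comp a}(S) \in \rSSYT_n(\sort(\comp a))$ and $\rho(T) \in \SSYT_n$ with shape $\sort(\comp a)$ as well, since $\rho$ takes a reverse SSAF of shape $\comp a$ to the SSYT whose $i$th column is the same set as the $i$th column of $T$, and that column-set data has underlying partition $\sort(\comp a)$. (More carefully: the columns of a key diagram $\D(\comp a)$ have the same multiset of lengths as the Ferrers diagram $\D(\sort(\comp a))$, so reading off column-sets and repacking them as an SSYT or reverse SSYT produces shape $\sort(\comp a)$.) Hence $\sh(\tau(S)) = \sh(\rho(T)) = \sort(\comp a)$ and $(\tau(S), \rho(T)) \in \pairs_n$. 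This step is routine.

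Next I would verify the left-inverse claim. Given $(S,T) \in \fpairs_n$, write $(P,Q) = (\tau(S), \rho(T))$. Applying the proposed map, we first form $\rho^{-1}(Q)$; since $\rho$ is a genuine bijection and $Q = \rho(T)$, we get $\rho^{-1}(Q) = T$, and in particular $\sh(\rho^{-1}(Q)) = \comp a$. Therefore the first coordinate is $\tau^\dagger_{\comp a}(P) = \tau^\dagger_{\comp a}(\tau_{\comp a}(S))$, which equals $S$ because $\tau^\dagger_{\comp a}$ is a left inverse of $\tau_{\comp a}$ (this is the cited fact from \cite{ASch18}, with the equivalence of SSKT definitions from \cite{Ass18}). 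So the composite returns $(S,T)$, as needed. The one subtlety worth stating explicitly is that the formula uses $\sh(\rho^{-1}(Q))$ to pick which $\tau^\dagger$ to apply — this is exactly what makes the definition independent of knowing $\comp a$ in advance, and the computation above shows it recovers the correct $\comp a$.

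Finally, injectivity of $\tau \times \rho$ is immediate once a left inverse exists: if $(\tau \times \rho)(S,T) = (\tau \times \rho)(S',T')$ then applying the left inverse to both sides gives $(S,T) = (S',T')$. I expect no genuine obstacle here; the only place demanding care is confirming that $\rho(T)$ and $\tau(S)$ genuinely have the \emph{same} shape $\sort(\comp a)$ — i.e.\ that passing from a (reverse) skyline/key filling of shape $\comp a$ to its column-set repacking yields precisely the partition $\sort(\comp a)$ rather than some other rearrangement — which is what licenses writing down a pair in $\pairs_n$ at all. Everything else is a direct appeal to the already-established properties of $\rho$, $\tau_{\comp a}$, and $\tau^\dagger_{\comp a}$.
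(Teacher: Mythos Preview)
Your proof is correct and follows essentially the same approach as the paper: both verify well-definedness by checking that $\sh(\tau(S))=\sh(\rho(T))=\sort(\comp a)$ via the column structure of $\D(\comp a)$, then confirm the left inverse by the direct computation $\rho^{-1}(\rho(T))=T$ and $\tau^\dagger_{\comp a}(\tau_{\comp a}(S))=S$. The only cosmetic difference is that the paper phrases the shape check in terms of conjugate partitions (the $i$th part being the number of cells in column $i$), while you phrase it as ``same multiset of column lengths,'' which is the same observation.
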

	\begin{proof}
		Let $S\in\SSKT_n$. The partition conjugate to $\sort(\sh(S))$ is also the partition conjugate to $\sh(\tau(S))$, the $i$th part of either being the number of cells in column $i$ of $S$. Thus $\sort(\sh(S))=\sh(\tau(S))$. In the same way, $\sort(\sh(T))=\sh(\rho(T))$ for $T\in\rSSAF_n$. Then for $(S,T)\in\fpairs_n$ we have $\sh(S)=\sh(T)$ which implies $\sh(\tau(S))=\sh(\rho(T))$ so indeed $(\tau\times\rho)(S,T)\in \pairs_n$ and the map is well-defined.
		
		For $(S,T)\in\fpairs_n$ let $(P,Q)=(\tau(S),\rho(T))$. We have $\sh(S)=\sh(T)$ so
		\begin{align*}
		(\tau^\dagger_{\sh(\rho^{-1}(Q))}(P), \rho^{-1}(Q))&=(\tau^\dagger_{\sh(\rho^{-1}(\rho(T)))}(\tau(S)), \rho^{-1}(\rho(T)))\\&=(\tau^\dagger_{\sh(T)}(\tau_{\sh(T)}(S)),T)\\&=(S,T)
		\end{align*}
		which demonstrates the left inverse for $\tau\times\rho$.
	\end{proof}
	\begin{remark}
		Since SSYT and reverse SSYT are completely determined by their column sets, Lemma \ref{lem:subset} says that an element of $\fpairs_n$ is also completely determined by the column sets of its constituent fillings.
	\end{remark}

	We now construct the flagged RSK correspondence. Given $S\in\SSKT_n$ and $j\in[n]$ define $S\leftharpoondown_n j$ to be the filling obtained as follows, always allowing entries in the basement column into consideration.
	\begin{enumerate}
		\item Let $c=\infty$.
		\item Let $c'\le c$ be maximal such that column $c'$ contains strictly fewer entries weakly greater than $j$ than does column $c'-1$.
		\item Place the entry $j$ in the topmost position in column $c'$ not occupied by a weakly larger entry, and immediately right of an entry weakly greater than $j$. Remove the entry $j'$ that occupies the position if it exists.
		\item If an entry $j'$ was indeed removed, go back to step 2 replacing the values $c$ with $c'$ and $j$ with $j'$.
	\end{enumerate}

	\begin{remark}
		We consider $\SSYT_n$, $\rSSYT_n$, $\rSSAF_n$, $\SSKT_n$, $\pairs_n$, $\fpairs_n$ to all be subsets of $\SSYT_{n+1}$, $\rSSYT_{n+1}$, $\rSSAF_{n+1}$, $\SSKT_{n+1}$, $\pairs_{n+1}$, $\fpairs_{n+1}$ respectively in the obvious way. 
		
		In particular, we may write $S\leftharpoondown_{n} j$ for $S\in\SSKT_i$ with $i\le n$. The subtlety obscured by the notation $\hat S$ is that the number of cells in the basement depends on whether we view $S$ as an element of $\SSKT_n$ or $\SSKT_i$. When we write $S\leftharpoondown_{n} j$ we are implicitly considering $S$ as an element of $\SSKT_n$, and because of the basement this is not necessarily equivalent to $S\leftharpoondown_{i} j$.
	\end{remark}

	Given $(S,T)\in \fpairs_n$ and $j\in[n]$ define
	\[
	(S,T)\leftharpoondown \begin{pmatrix} n\\ j \end{pmatrix} = (S\leftharpoondown_n j, T')
	\]
	where $T'$ is obtained from $T$ by adding the cell $\D(\sh(S\leftharpoondown_n j))\setminus\D(\sh(S))$ with entry $n$.
	
	Given a biword $\begin{pmatrix} i_1&i_2&\cdots&i_\ell\\ j_1&j_2&\cdots&j_\ell \end{pmatrix}$ with each $j_k\le i_k$, i.e. corresponding to a lower triangular matrix $L$, we define
	\[
	L \overset{\fRSK}{\mapsto} \left(\cdots\left(\left((\emptyset,\emptyset)\leftharpoondown \begin{pmatrix} i_1\\ j_1 \end{pmatrix} \right)\leftharpoondown \begin{pmatrix} i_2\\ j_2 \end{pmatrix} \right)\cdots\leftharpoondown \begin{pmatrix} i_\ell\\ j_\ell \end{pmatrix}\right).
	\]
	
	The pair of fillings in Figure \ref{fig:fillings} is in fact the image of 
	\[
	\setcounter{MaxMatrixCols}{13}
	\begin{pmatrix} 1&3&3&4&4&4&5&5&5&5&6&7&7\\1&3&2&4&3&1&4&4&3&2&1&6&3  \end{pmatrix}
	\]
	whose final insertion computation $\leftharpoondown_7 3$ is shown in Figure \ref{fig:insertion}.
	
	\begin{figure}[ht]
		\begin{tikzpicture}
			\node at (0,0) (A) {
				\begin{ytableau}
					\none\\\none\\\none[7]\\\none[6]\\\none[5]\\\none[4]\\\none[3]\\\none[2]\\\none[1]\\
				\end{ytableau}
				\vline
				\begin{ytableau}
					\none&\none&\none&\none&\none[3]&\none\\
					\none&\none&\none&\none&\none[\downarrow]&\none\\
					6\\
					\none \\
					2\\
					4&4&3&3&2&1\\
					3&3&1\\
					\none\\
					1\\
					\hline
				\end{ytableau}
			};
		
			\node at (3.2,0) (A) {
				\begin{ytableau}
					\none\\\none\\\none[7]\\\none[6]\\\none[5]\\\none[4]\\\none[3]\\\none[2]\\\none[1]\\
				\end{ytableau}
				\vline
				\begin{ytableau}
					\none&\none&\none[2]&\none&\none&\none\\
					\none&\none&\none[\downarrow]&\none&\none&\none\\
					6\\
					\none \\
					2\\
					4&4&3&3&*(green) 3&1\\
					3&3&1\\
					\none\\
					1\\
					\hline
				\end{ytableau}
			};
			\node at (6.4,0) (A) {
				\begin{ytableau}
						\none\\\none\\\none[7]\\\none[6]\\\none[5]\\\none[4]\\\none[3]\\\none[2]\\\none[1]\\
				\end{ytableau}
				\vline
				\begin{ytableau}
					\none&\none[1]&\none&\none&\none&\none\\
					\none&\none[\downarrow]&\none&\none&\none&\none\\
					6\\
					\none \\
					2\\
					4&4&3&3&3&1\\
					3&3&*(green)2\\
					\none\\
					1\\
					\hline
				\end{ytableau}
			};
			\node at (9.6,0) (A) {
				\begin{ytableau}
					\none\\\none\\\none[7]\\\none[6]\\\none[5]\\\none[4]\\\none[3]\\\none[2]\\\none[1]\\
				\end{ytableau}
				\vline
				\begin{ytableau}
					\none&\none&\none&\none&\none&\none\\
					\none&\none&\none&\none&\none&\none\\
					6&*(green)1\\
					\none \\
					2\\
					4&4&3&3&3&1\\
					3&3&2\\
					\none\\
					1\\
					\hline
				\end{ytableau}
			};
		\end{tikzpicture}
		\caption{\label{fig:insertion}The insertion procedure $\leftharpoondown_73$.}
	\end{figure}
	
	\begin{lemma}\label{lem:insertion}
		If $S\in\SSKT_n$, and $j_0\in[n]$, then the operation $S\leftharpoondown_{n}j_0 $ is well-defined and the result is in $\SSKT_n$.
	\end{lemma}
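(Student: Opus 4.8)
The plan is to track the insertion $S \leftharpoondown_n j_0$ step by step, maintaining two invariants: first, that each time we arrive at step 2 with a current value $j$ and a bound $c$, the column $c$ we are about to operate within actually admits a valid placement (so the procedure never gets stuck); and second, that after placing $j$ and possibly bumping $j'$, the partial filling produced so far still satisfies the SSKT conditions. Since the SSKT condition is characterized by $\hat S$ being non-attacking with $\maj(\hat S)=\coinv(\hat S)=0$, I would phrase these invariants concretely: within each row the entries are weakly decreasing (that is $\maj=0$), no two equal entries attack (non-attacking), and there is no Type~I or Type~II co-inversion triple (that is $\coinv=0$). The termination argument is easy: at step 3 either we append a new cell (the procedure halts) or we bump an entry $j'$ which, by the placement rule, satisfies $j' > j$ is false — in fact $j'$ is weakly larger than $j$, and actually strictly larger because equal entries would attack — wait, more carefully, $j'$ is the entry we displace, and one shows $j' \ge j$, with the sequence of bumped values strictly increasing and bounded by $n$, so the process stops.

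First I would establish the geometric fact underlying step 2: for any column $c'$ of $\hat S$, the number of entries weakly greater than $j$ in column $c'$ is weakly less than the corresponding count in column $c'-1$. This follows from Lemma~\ref{lem:ledge} applied columnwise — if column $c'$ is weakly longer past some threshold than column $c'-1$, the extra cells would force a co-inversion triple or an attack — so there is always a maximal $c' \le c$ with a strict drop in this count, hence step 2 is well-defined, and moreover column $c'-1$ has at least one more entry $\ge j$ than column $c'$, which guarantees an admissible slot in step 3 (there is an entry weakly greater than $j$ in column $c'-1$ at the height where we insert, and no weakly larger entry in column $c'$ blocks the topmost available position). This is the step I expect to be the main obstacle: pinning down exactly why the "topmost position not occupied by a weakly larger entry and immediately right of an entry weakly greater than $j$" both exists and is unique, and why inserting there cannot create a Type~I triple with cells further to the right or a Type~II triple with cells further to the left.

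Next I would verify the invariants are preserved across one bump. Placing $j$ into column $c'$ at the prescribed cell keeps rows weakly decreasing by the choice of position; it creates no attacking pair of equal entries because the displaced $j'$ (if any) was the unique equal-or-larger occupant and $j$ differs from its neighbors appropriately; and the triple conditions are checked by the same local arguments as in Lemmas~\ref{lem:flat} and~\ref{lem:ledge} — a new Type~I or Type~II co-inversion triple involving the freshly placed $j$ would contradict either the decreasing-row property just verified or the columnwise count inequality that dictated the choice of $c'$. The displaced entry $j'$ then re-enters step 2 with the updated bound $c := c'$, and one checks $j' \ge j$ so the monotonicity of the bumping sequence holds; since values lie in $[n]$ the recursion terminates, at which point the final cell is appended in the topmost vacant spot of its column, which is legal precisely because the column to its left is longer there (again by the count inequality). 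Assembling these, the output is a filling of a key diagram $\D(\sh(S\leftharpoondown_n j_0))$ satisfying all SSKT conditions, so $S \leftharpoondown_n j_0 \in \SSKT_n$.
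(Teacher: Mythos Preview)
Your overall architecture matches the paper's: induct on the number of bumps, maintain that each intermediate filling is an SSKT, and verify that step~(2) always finds a column. But there are two genuine problems.

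First, you have the bumping direction reversed. Step~(3) places $j$ in the topmost position \emph{not} occupied by a weakly larger entry, so any displaced $j'$ satisfies $j' < j$, not $j' \ge j$. The paper records $j_0 > j_1 > \cdots > j_t$, and this strict decrease drives termination. The direction also feeds into the well-definedness of step~(2) after a bump: the paper uses that the just-displaced value $j_t$ no longer appears in column $c_t$, then locates the leftmost column $b \le c_t$ missing the entry $j_t$ in row $j_t$ of the augmented filling. Your column-count monotonicity observation (which follows directly from $\maj=0$, not from Lemma~\ref{lem:ledge}) gives only weak decrease and does not by itself supply the \emph{strict} drop you need within the restricted range $c' \le c_t$.

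Second, and more seriously, the co-inversion triple verification is where essentially all the difficulty lives, and you have not engaged with it. The paper's proof spends over a page on case analysis: for Type~I triples it separately treats the inserted entry playing each of the three roles $u$, $v$, $w$, in each case deriving a contradiction by exhibiting a co-inversion triple in the \emph{previous} filling $\hat S_t$ or even $\hat S_{t-1}$; it then separately handles the possibility that inserting a cell lengthens a row and thereby creates new Type~I triples not containing the inserted entry at all. Type~II triples require a parallel analysis plus a delicate special case where the insertion changes which of two rows is longer. None of this reduces to a direct invocation of Lemmas~\ref{lem:ledge} or~\ref{lem:flat}; those lemmas appear only as ingredients inside specific sub-cases. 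Your sentence ``a new Type~I or Type~II co-inversion triple involving the freshly placed $j$ would contradict either the decreasing-row property just verified or the columnwise count inequality'' is not a proof---several of the paper's cases genuinely require reaching back to $\hat S_{t-1}$ and exploiting the precise choice of insertion position made there.
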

	\begin{proof}
		Suppose we are at the beginning of step (2) of the insertion $S\leftharpoondown_{n} j_0$ and we have reached step (4) exactly $t$ times. Let $j_0,j_1,\ldots j_t$ be the successive values of $j$ in the procedure so far, $\infty=c_0,c_1,\ldots, c_t$ the successive values of $c$, and $S=S_0,S_1,\ldots, S_t$ the successive intermediate fillings. Assume that the procedure thus far has been well-defined, and $S_t\in\SSKT_n$. It can be seen from the insertion definition that $j_0>j_1>\cdots>j_t$.
		
		Our first order of business is to show that the next steps (2), (3), (4) of the insertion are well-defined. If $c'$ is determined as in step (2) then step (3) is well-defined, and (4) certainly is as well. So we need only show step (2) makes sense.
		
		If $t=0$ so that $c=\infty$, a column $c'$ as described in step (2) certainly exists as the basement column contains the entry $j_0$.
		
		Suppose $t>0$. Column $c_t$ cannot contain an entry equal to $j_t$, since an entry $j_t$ has just been replaced by a strictly larger entry and the previous filling could only have had a single entry $j_t$ in the column due to the non-attacking condition on $S_t$. Then there is a leftmost column $b\le c$ which does not contain the entry $j_t$ in row $j_t$, and $b>0$. Since the rows of $S_t$ weakly decrease left to right and there is an entry $j_t$ in row $j_t$ column $b-1$, any entry in row $j_t$ column $b$ must be strictly less than $j_t$. Therefore column $b$ contains strictly fewer entries weakly greater than $j_t$ than does column $b-1$, so step (2) is well-defined.
		
		Now we must show that the filling $S_{t+1}$ obtained by applying step (3) is an SSKT.
		
		\begin{description}
			\item[$\maj(\hat S_{t+1})=0$] The entry immediately left of the insertion position is weakly greater than $j_t$, and if an entry exists immediately to the right of the insertion position then it is weakly less than the entry $j_{t+1}<j_t$ that $j_t$ replaces. Therefore $\maj(\hat S_{t+1})$ remains zero.
			
			\item[$\hat S_{t+1}$ is non-attacking] We have seen above that column $c_t$ of $S_t$ does not contain an entry $j_t$. Then none of the columns $c',c'+1,\ldots, c_t$ can contain an entry $j_t$ without contradicting the choice of $c'$. Therefore no two cells share an entry in column $c'$ of $S_{t+1}$.
			
			If there is an entry $j_t$ in column $c'-1$ strictly above the row of the newly inserted entry $j_t$, then the position immediately right of the former entry cannot contain another entry $j_t$, but this contradicts the insertion position. Therefore $\hat S_{t+1}$ remains non-attacking.
			
			\item[$\hat S_{t+1}$ contains no Type I co-inversion triples] Because $\maj(\hat S_{t+1})=0$, recall that the only possible Type I co-inversion triples are of the form $u<v<w$ with $u,w$ in the strictly longer row $r$ below the row $s$ containing $v$ (cf. $i<j<k$ in Fig. \ref{fig:inv}).
			
			If $j_t$ takes the role of $u$, then there must be an entry $v'>j_t$ immediately right of $v$ to justify the insertion position. Then there must be an entry immediately right of $j_t$ since the row is strictly longer than row $s$. This means that in $\hat S_t$ there must have been an entry $j_{t+1}<j_t$ occupying the position of $j_t$. Then $j_{t+1}<v<w$ is a co-inversion triple in $\hat S_t$ which is a contradiction.
			
			If $j_t$ takes the role of $w$, then we must have that the previous entry $j_{t+1}<j_t$ in the position satisfies $j_{t+1}< v$. To be consistent with the choice of insertion position, the entry $v'$ immediately left of $v$ satisfies $v'<j_t$. The entry $w'$ immediately left of the insertion position satisfies $w'\ge j_{t}>v'\ge v>j_{t+1}$. In particular $j_{t+1}<v'<w'$ is a co-inversion triple in $\hat S_t$, a contradiction.
			
			Suppose $j_t$ takes the role of $v$. Since $u<j_t<w$ with $u$ in column $c'+1$, we must have $c_t=c'$ and $t>0$ by choice of $c'$. Now we know that $j_{t}$ lies in column $c_t=c'$ of $\hat S_{t-1}$. It must lie weakly below row $r$, else $j_t<w$ contradicts Lemma \ref{lem:ledge}, or we again have a co-inversion triple $u<j_t<w$. Say that $v'$ is the entry in $(c_t-1,s)$, i.e. immediately left of $j_t$ in $\hat S_{t+1}$ and $j_{t+1}$ in $\hat S_{t-1}$. In $\hat S_{t-1}$ the row containing $j_t$ strictly below row $s$ must be strictly longer than row $s$, else $j_{t+1}<j_t<v'$ is a Type II co-inversion triple. Since $j_{t-1}$ is inserted into the position occupied by $j_t$ in $\hat S_{t-1}$, and $j_{t-1}>j_{t+1}$, we must have $v'<j_{t-1}\le p$ to justify this insertion position. Then $j_t<v'<p$ is a co-inversion triple in $\hat S_{t-1}$ which is impossible.
			
			Finally, we must consider that in obtaining $\hat S_{t+1}$ we may have changed the relative length of rows and to create new Type I triples that do not contain the newly inserted entry. That is, suppose row $r,s$ are the same length in $\hat S_{t}$, but row $r$ is strictly longer in $\hat S_{t+1}$. By Lemma \ref{lem:ledge} every entry in row $s$ is strictly greater than the entry in the same column of row $r$ in $\hat S_{t}$. This carries over to $\hat S_{t+1}$ which means none of the new Type I triples can be co-inversion triples.
			
			\item [$\hat S_{t+1}$ contains no Type II co-inversion triples]
			Because $\maj(\hat S_{t+1})=0$, the only Type II co-inversion triples $u<v<w$ have $u,w$ in the weakly longer row $s$ above the row $r$ containing $v$. 
			
			First we consider the special case where we have such a co-inversion triple in a pair of rows whose relative lengths have been changed by the insertion of $j_t$. This is to say, in $\hat S_{t+1}$ the entry $j_t$ lies in the insertion position $(c',s)$, and some entry $q$ of the cell $(c',r)$ is the rightmost entry in row $r$ of both $\hat S_t$ and $\hat S_{t+1}$. Let $q'$ be the entry immediately left of $q$, and $p$ the entry of $(c'-1,s)$. If $q<j_t$ then $q<p$ which means $p>q'$ to avoid a Type I co-inversion triple in $\hat S_t$. In this case, by Lemma \ref{lem:flat}, every entry of row $s$ is strictly greater than the entry in row $r$ of the same row, in both $\hat S_t$ and $\hat S_{t+1}$. This prevents any co-inversion triples in these rows.
			
			If instead $q>j_t$, recalling the position immediately right of $q$ is empty, by choice of insertion column $c'$ we must have that $c'=c_t$ and $t>0$. Then in $\hat S_{t-1}$ we must have the entry $j_t$ in column $c_t$, necessarily weakly below row $r$ by Lemma \ref{lem:ledge}. Let $h$ be the entry immediately left of $j_t$ in $\hat S_{t-1}$. We must have $p<h$ since $j_{t-1}$ gets inserted immediately right of $h$ rather than $p$. This leads to the Type I co-inversion triple $j_t<p<h$ in $\hat S_{t-1}$ which is a contradiction.
			
			Now we consider the Type II co-inversion triples that contain the inserted entry $j_t$. If $j_t$ takes the role of $u$, the same position must be empty in $\hat S_t$ else it would be occupied by some $j_{t+1}<j_t$ leading to the co-inversion triple $j_{t+1}<v<w$. This puts us back into the above special case which has already been discounted.
			
			We cannot have $j_t$ take the role of $v$ as this would contradict the choice of insertion position.
			
			If $j_t$ takes the role of $w$ then it must have replaced an entry $j_{t+1}$ in $\hat S_{t}$. So the row $s$ is weakly longer than row $r$ in $\hat S_t$ as well. By Lemma \ref{lem:ledge} we then have $u>v$, a contradiction.
		\end{description}
		We are done by induction on $t$.
	\end{proof}

	We now see that the flagged insertion algorithm is a special case of the usual insertion algorithm for RSK.

	\begin{proposition}\label{prop:insertion}
		For $j_0\in[n]$ the diagram
		\begin{center}
			\begin{tikzcd}
				\SSKT_n \arrow[r, "\leftharpoondown_n j_0"] \arrow[d, "\tau"]
				& \SSKT_n \arrow[d, "\tau"] \\
				\rSSYT_n \arrow[r, "\leftarrow j_0"]
				& \rSSYT_n
			\end{tikzcd}
		\end{center}
		commutes.
	\end{proposition}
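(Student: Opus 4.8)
The plan is to reduce to an identity on column content sets. A reverse semistandard Young tableau is determined by the unordered sets of entries in its columns (each column being strictly decreasing), and $\tau$ is defined so as to preserve these sets; moreover $S\leftharpoondown_n j_0\in\SSKT_n$ by Lemma~\ref{lem:insertion}, so $\tau$ does apply on the left of the square. Hence it is enough to show that $S\leftharpoondown_n j_0$ and $\tau(S)\leftarrow j_0$ produce the same collection of column sets (with the basement counted as column $0$, which is never altered). The idea is to run the two insertion procedures in parallel and check that they perform the same sequence of column modifications. We will use the structural fact, valid in $\SSKT_n$ and in $\rSSYT_n$ alike, that consecutive column sets $C_c,C_{c+1}$ satisfy $|C_c|\ge|C_{c+1}|$ and, sorted decreasingly, the $k$-th entry of $C_c$ is $\ge$ the $k$-th entry of $C_{c+1}$; in particular $g_c(v):=|\{x\in C_c : x\ge v\}|$ is weakly decreasing in $c$ for every value $v$, which is what makes step~(2) of $\leftharpoondown$ well-posed.

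For $\tau(S)\leftarrow j_0$ the usual bookkeeping gives a bumping route through columns $d_1\ge d_2\ge\cdots\ge d_{T+1}$ carrying values $j_0=m_0>m_1>\cdots>m_T$: in row $t+1$ the value $m_t$ is deposited in column $d_{t+1}$, evicting $m_{t+1}$, which is the largest element of $C_{d_{t+1}}$ lying below $m_t$; the value $m_T$ is finally appended. The monotonicity $d_{t+1}\le d_t$ follows from the row/column inequalities of reverse SSYT. The net effect on column sets is that each column $d_{t+1}$ swaps $m_{t+1}$ for $m_t$ (for $t<T$) and column $d_{T+1}$ gains $m_T$. I will show by induction on $t$ that $S\leftharpoondown_n j_0$ is \emph{synchronized} with this, i.e., writing $j_0>j_1>\cdots$ and $\infty=c_0\ge c_1\ge\cdots$ for its successive values and columns: the intermediate fillings $S_t$ and $P_t$ (after $t$ bumps) have the same column sets, $j_t=m_t$, and $c_t=d_t$ (with $d_0:=\infty$).

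In the inductive step I must identify both the next column $c_{t+1}$ and the next evicted entry. For the column, note that all modifications turning $\tau(S)$ into $P_t$ occur in columns $\ge d_t$, so columns $c<d_t$ of $P_t$ agree with those of $\tau(S)$; using this I would show $g_c(m_t)=t$ for all $c$ with $d_{t+1}\le c\le d_t$: one gets $g_c(m_t)\ge t$ for $c<d_t$ because the row-$t$ entry there is $\ge m_{t-1}>m_t$, $g_c(m_t)\le t$ for $c\ge d_{t+1}$ because the row-$(t+1)$ entry there is $<m_t$, and $g_{d_t}(m_t)=t$ because replacing $m_t$ by the larger $m_{t-1}$ in row $t$ of column $d_t$ leaves this count unchanged. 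Since $g_{\bullet}(m_t)$ strictly drops at $d_{t+1}$ (its left neighbour has a row-$(t+1)$ entry $\ge m_t$; in the extreme case $d_{t+1}=1$ one checks $g_0(m_t)=n-m_t+1\ge t+1$, using $m_t\le j_0-t\le n-t$) but is constant on $(d_{t+1},d_t]$, the ``maximal $c'\le c_t=d_t$ at which a drop occurs'' chosen in step~(2) is exactly $d_{t+1}$. For the evicted entry, I would invoke the SSKT combinatorics already developed in the proof of Lemma~\ref{lem:insertion} — Lemmas~\ref{lem:ledge} and~\ref{lem:flat}, the non-attacking condition, and $\maj=0$ — to verify that the placement rule in step~(3) puts $m_t$ in the cell of column $d_{t+1}$ carrying the largest entry below $m_t$, namely $m_{t+1}$ (and, if $C_{d_{t+1}}$ has no entry below $m_t$, appends $m_t$ and halts, exactly as the classical procedure does). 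Thus both procedures perform the swap $m_{t+1}\mapsto m_t$ in column $d_{t+1}$; the induction goes through, and at termination both have appended $m_T$ to column $d_{T+1}$, so the final column sets coincide, which finishes the proof.

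I expect the main obstacle to be this last verification: step~(3) is phrased using the (vertically unsorted) rows of an SSKT and the condition ``immediately right of a weakly greater entry'', so one must argue that the cell it selects in column $d_{t+1}$ holds precisely $m_{t+1}$ and not some other entry below $m_t$ — essentially the analysis already carried out for Lemma~\ref{lem:insertion}, which should be quoted or lightly adapted rather than repeated. Proving $d_{t+1}\le d_t$ and the run $g_c(m_t)=t$ on $(d_{t+1},d_t]$ is routine once the invariant is in hand, relying on the consecutive-column comparison above; the degenerate cases — the path reaching the basement, or terminating by creating a fresh cell — must be matched against the conventions but raise no real difficulty.
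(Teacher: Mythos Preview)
Your approach has a genuine gap: the step-by-step synchronization claim $j_t=m_t$, $c_t=d_t$ is false in general. The example in the Remark immediately following the proposition already breaks it. Take $n=3$, $S$ the SSKT with entries $1,2$ in rows $1,2$ of column $1$, and $j_0=3$. Classical insertion $\tau(S)\leftarrow 3$ runs through three rows of column $1$ with $m_0=3$, $m_1=2$, $m_2=1$, so $T=2$ and $d_1=d_2=d_3=1$. Flagged insertion $S\leftharpoondown_3 3$ finds $c_1=1$ and step~(3) places $3$ in the \emph{empty} cell $(1,3)$ (the only cell in column $1$ immediately right of a basement entry $\ge 3$), so it terminates after a single iteration with no eviction at all. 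Your inductive hypothesis would require $j_1=m_1=2$, but there is no $j_1$.

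The error is in your description of step~(3): you assert that it ``puts $m_t$ in the cell of column $d_{t+1}$ carrying the largest entry below $m_t$'', but the rule actually places $j$ in the topmost cell that is both not occupied by something $\ge j$ \emph{and} immediately right of something $\ge j$. In an SSKT column these two conditions need not single out the cell holding the largest entry below $j$; the chosen cell may be empty, or hold an entry strictly smaller than $m_{t+1}$. Consequently the two procedures can take different numbers of iterations, and neither refines the other in general.

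The paper avoids this by \emph{not} matching iterations one-to-one. It first rewrites classical insertion purely in terms of column sets (where same-column reinsertions are invisible), obtaining a characterization of the next column $c'$ and the next entry $j'$ that actually leaves the column. It then checks that the flagged procedure satisfies the same column-set characterization, again after collapsing same-column reinsertions. Your column-count argument for identifying $c'=d_{t+1}$ is on the right track and essentially what the paper does; what needs to change is the treatment of the evicted entry: you must allow several flagged steps (or several classical steps) to stay in the same column and show that the entry that eventually exits that column is the same for both, rather than matching each individual bump.
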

	\begin{proof}
		Consider step (2) of the insertion algorithm $\tau(S)\leftarrow j_0$. We have some value of $j$ we must insert, some row value $r$, and also some column $c$ from which the entry $j$ was just removed (taking $c=\infty$ if $j=j_0$). The entry $j$ is to be inserted in the leftmost position in the $r$th row not occupied by a weakly larger entry, say in column $c'$. Before applying step (2) of the algorithm, column $c'$ then contains exactly $r-1$ entries weakly greater than $j$ since the column entries are sorted. If $c'>1$ then the entry in row $r$ column $c'-1$ is weakly greater than $j$ by choice of insertion position. Then if $c'>1$, column $c'$ contains strictly fewer entries strictly greater than $j$ than does column $c'-1$. In fact $c'$ is the maximal column index $c'\le c$ with this property as it is exactly the lowest $r-1$ entries in columns $c'+1,c'+2,\ldots, c$ that are weakly greater than $j$.
		
		This is to say we can restate the algorithm $\tau(S)\leftarrow j_0$ in terms of column sets as follows.
		
		\begin{enumerate}
			\item Set $j=j_0$ and $c=\infty$.
			\item Let $c'$ be the rightmost column $c'\le c$ such that either $c'=1$ or column $c'$ contains strictly fewer entries strictly greater than $j$ than does column $c'-1$. Let $j'$ be the largest entry $j'<j$ such that columns $c'-1$ and $c'$ contain the same number of entries weakly greater than $j'$, if it exists.
			
			\item Place $j$ in column $c'$ and remove $j'$ if it exists.
			\item Go back to step (2) setting $j=j'$ and $c=c'$.
		\end{enumerate}
		
		Note that if $j'$ did not satisfy the condition we place on it, then it would be reinserted into the same column, a step that this characterization freely ignores. If we show that the column sets of $S\leftharpoondown_n j_0$ are characterized in this same way then we are done by definition of $\tau$.
		
		Now considering step (3) of the insertion $S\leftharpoondown_n j_0$, if there are strictly fewer entries weakly greater than $j'$ in column $c'$ than in column $c'-1$, then this is still true after replacing $j'$ by $j$. Then $j'$ will be inserted into the same column $c'$, which is ignorable if we care only about column sets.
		
		Suppose there is an entry $k$ in column $c'$, maximal such that $k<j$ and both columns $c'$ and $c'-1$ contain the same number of entries weakly greater than $k$. If $j$ is to be inserted immediately right of the entry $u\ge j>k$ then by choice of $k$ there exists $j'$ immediately right of $u$ with $j'\ge k$. If $j'>k$ then $j'$ will be inserted back into the same columns as above.
		
		We conclude as follows that the column set characterization above holds for $S\leftharpoondown_n j_0$. If there is no entry $k$ as described then we may terminate they algorithm as any additional steps will see us merely adding and removing entries from the same fixed column. If there is such an entry $k$ then the algorithm for $S\leftharpoondown_n j_0$ will eventually remove $k$ from the column, and any intermediate steps are ignorable in terms of column sets. The entry $k$ is determined exactly the same way as $j'$ in the column set characterization for $\tau(S)\leftarrow j_0$, so we have found that the same algorithm computes the column sets of $S\leftharpoondown_n j_0$.
	\end{proof}
	\begin{remark}
		Notice that the operation
		\[
		\left(\begin{ytableau}
			\none[3]\\\none[2]\\\none[1]\\
		\end{ytableau}
		\vline
		\begin{ytableau}
			\none\\
			2\\
			1\\
			\hline
		\end{ytableau} \quad\leftharpoondown_33\right)=
		\begin{ytableau}
		\none[3]\\\none[2]\\\none[1]\\
		\end{ytableau}
		\vline
		\begin{ytableau}
		3\\
		2\\
		1\\
		\hline
		\end{ytableau}
		\]
		requires only one iteration of its defining algorithm whereas the equivalent insertion
		\[
		\left(\begin{ytableau}
			1\\
			2\\
		\end{ytableau} \quad\leftarrow3\right)=
		\begin{ytableau}
			1\\
			2\\
			3\\
		\end{ytableau}
		\]
		requires more replacements to be made: 2 by 3 and 1 by 2. In fact it is true in general that the classical insertion algorithm requires at least as many applications of its core loop as does the equivalent flagged operation. Consider that when an entry $j$ is added to a column set by classical insertion, each existing entry in the column less than $j$ and greater than the entry $j'$ that is ultimately removed from the column (if it exists) must have its position shifted up. Meanwhile the flagged insertion algorithm shuffles some subset of these entries.
	\end{remark}

	We are ready to show the flagged RSK algorithm is a restriction of the classical algorithm.

	\begin{theorem}\label{thm:rsk}
		The map $\fRSK:\mathcal L_n\to \fpairs_n$ is well-defined and the diagram
		\begin{center}
			\begin{tikzcd}
				\mathcal L_n \arrow[r, "\fRSK"] \arrow[d,hookrightarrow]
				& \fpairs_n \arrow[d,hookrightarrow, "\tau\times\rho"] \\
				\mathcal M_n \arrow[r, "\RSK"]
				& \pairs_n
			\end{tikzcd}
		\end{center}
		commutes.
	\end{theorem}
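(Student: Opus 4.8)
The plan is to prove both assertions --- that $\fRSK$ is well-defined and that the square commutes --- simultaneously by induction on the number $\ell$ of biletters of the biword of $L$. The base case $\ell=0$ is the pair of empty fillings on both sides. For the inductive step, factor the biword of $L$ as the biword of a lower triangular matrix $L'$ followed by a single biletter $\binom{i}{j}$ with $j\le i$; set $\fRSK(L')=(S,T)$ and $\RSK(L')=(P,Q)$, so that by induction $(S,T)\in\fpairs_n$ is well-defined and $(\tau\times\rho)(S,T)=(P,Q)$, that is, $\tau(S)=P$ and $\rho(T)=Q$.

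By Lemma~\ref{lem:insertion}, $S\leftharpoondown_n j$ is a well-defined element of $\SSKT_n$, and by Proposition~\ref{prop:insertion} we have $\tau(S\leftharpoondown_n j)=\tau(S)\leftarrow j=P\leftarrow j$, so the insertion tableaux on both sides already agree. Since $\tau$ preserves column sets and sends a filling of shape $\comp c$ to one of shape $\sort(\comp c)$ (as in the proof of Lemma~\ref{lem:subset}), the flagged insertion enlarges the key diagram of $S$ by exactly one cell, lying in the column $c'$ whose column set grows, and this $c'$ is the same column in which $P\leftarrow j$ gains its new box. Classical RSK then forms $Q'$ from $Q$ by adjoining $i$ in column $c'$, at the outer corner of that column.

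It remains to analyze the flagged recording filling $T'$, obtained from $T$ by placing $i$ in the cell $\D(\sh(S\leftharpoondown_n j))\setminus\D(\sh S)$. Its shape equals $\sh(S\leftharpoondown_n j)$ because $\sh(T)=\sh(S)$, so $(S\leftharpoondown_n j,T')$ has matching shapes. One must check that $T'$ is a reverse SSAF: the recorded value $i$ is weakly larger than every entry of $T$ by the ordering conditions on the biword, the column-$c'$ entries of $T$ coincide as a set with those of column $c'$ of $Q$ and hence --- since $Q'$ is a genuine SSYT --- are all strictly smaller than $i$, and the new cell occupies the position prescribed by step~(3) of the insertion; a case analysis of the possible attacking pairs and of Type~I/II triples involving the new cell then shows $\hat T'$ is non-attacking with $\comaj(\hat T')=\inv(\hat T')=0$. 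Equivalently, one identifies $T'$ with $\rho^{-1}(Q')$ directly by tracking how the column-by-column construction of $\rho^{-1}$ responds to adjoining $i$ to column $c'$ of $Q$. This identification --- in particular, pinning down the \emph{row} of the new recorded cell so that the reverse-SSAF conditions survive --- is the step I expect to demand the most care, and is the principal obstacle.

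Granting that $T'\in\rSSAF_n$, the column sets of $T'$ are those of $T$ with $i$ adjoined to column $c'$, hence those of $Q$ with $i$ adjoined to column $c'$, i.e.\ the column sets of $Q'$; since an SSYT is determined by its column sets and $\rho$ preserves them (the remark following Lemma~\ref{lem:subset}), $\rho(T')=Q'$. Combined with $\tau(S\leftharpoondown_n j)=P\leftarrow j$, this yields $(S\leftharpoondown_n j,T')\in\fpairs_n$ and $(\tau\times\rho)(\fRSK(L))=(P\leftarrow j,Q')=\RSK(L)$, completing the induction.
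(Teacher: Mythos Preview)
Your plan matches the paper's proof essentially step for step: the paper also inducts on $\ell$, invokes Proposition~\ref{prop:insertion} to align the insertion tableaux and identify the common column $c'$, and then establishes $T'=\rho^{-1}(Q')$ (your second suggested route) to conclude $T'\in\rSSAF_n$. The obstacle you flag is exactly the one the paper addresses, and it closes it with two short observations rather than a direct case analysis of triples. First, by the standard RSK fact that if $p\ge q$ then the box created by $R\leftarrow p$ lies in a column strictly left of the box created by $(R\leftarrow p)\leftarrow q$, every entry of $Q'$ in a column strictly right of $c'$ is strictly less than $i$; consequently the column-by-column construction of $\rho^{-1}(Q')$ agrees with that of $\rho^{-1}(Q)=T$ in every column except that in column $c'$ the value $i$ is processed last and therefore lands in the topmost empty position of $\hat T$ immediately right of an occupied cell. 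Second, Lemma~\ref{lem:ledge} applied to $S$ shows that the entries of column $c'-1$ of $\hat S$ lying immediately left of an empty cell increase from bottom to top, which forces the new cell $\D(\sh(S\leftharpoondown_n j))\setminus\D(\sh(S))$ to be that same topmost position. Together these give $\rho^{-1}(Q')=T'$, after which your final paragraph goes through verbatim.
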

	\begin{proof}
		Assume that for any biword of length at most $\ell$, its image under the flagged RSK algorithm is in $\fpairs_n$ and the maps commute. Take a biword $\begin{pmatrix} i_1&\cdots&i_{\ell+1}\\ j_1&\cdots&j_{\ell+1} \end{pmatrix}$ with each $j_k\le i_k$. Let $(S,T)$ and $(P,Q)$ be the respective images of $\begin{pmatrix} i_1&\cdots&i_{\ell}\\ j_1&\cdots&j_{\ell} \end{pmatrix}$ under $\fRSK$ and $\RSK$, so that $(P,Q)=(\tau(S),\rho(T))$ by assumption. Let
		\[
		(S',T')=\fRSK \begin{pmatrix} i_1&\cdots&i_{\ell+1}\\ j_1&\cdots&j_{\ell+1} \end{pmatrix} = (S,T)\leftharpoondown \begin{pmatrix} i_{\ell+1}\\ j_{\ell+1} \end{pmatrix}
		\]
		and
		\[
		(P',Q')=\RSK \begin{pmatrix} i_1&\cdots&i_{\ell+1}\\ j_1&\cdots&j_{\ell+1} \end{pmatrix} = (P,Q)\leftarrow  \begin{pmatrix} i_{\ell+1}\\ j_{\ell+1} \end{pmatrix}.
		\]
		
		By Proposition \ref{prop:insertion} we know $\tau(S')=P'$. Then, the column $c$ of $\D(\sh(S'))\setminus \D(\sh(S))$ is also the column of $\sh(P')\setminus \sh(P)$. Therefore both $T'$ and $Q'$ are obtained by adding the entry $i_{\ell+1}$ to column $c$ of $T$ and $Q$ respectively. Since $Q=\tau(T)$ the column sets of $T'$ and $Q'$ still match and the only question is whether $T'$ is in $\rSSAF_n$. We will show $\rho^{-1}(Q')=T'$ to prove this is the case.
		
		Note that for a reverse SSYT $R$ and $p\ge q$, the box added from $R\leftarrow p$ is in a column strictly left of the second box added from $(R\leftarrow p)\leftarrow q$. This is because we can inductively assume the first entry inserted into a row $r$ is weakly greater than the second entry inserted into row $r$ if it exists, and consequently the first entry removed from the row is strictly left of and weakly greater than the second removed entry, if it exists. In particular, this fact implies any entry of $Q'$ strictly right of column $c$ is strictly less than $i_{\ell+1}$.
		
		The constructions of $\rho^{-1}(Q')$ and $\rho^{-1}(Q)$ are identical in the first $c-1$ columns where the column sets are identical. In the construction of $\rho^{-1}(Q')$, the entry $i_{\ell+1}$ is the last to be added to column $c$, so the positions of all other entries of the column coincide in both reverse SSAF. Since all entries strictly right of column $c$ are strictly less than $i_{\ell+1}$, the rest of the $\rho^{-1}(Q')$ and $\rho^{-1}(Q)$ construction proceed identically. Therefore, $\rho^{-1}(Q')$ can be obtained from $T=\rho^{-1}(Q)$ by placing the entry $i_{\ell+1}$ in the topmost empty position of $\hat T$ immediately right of an occupied cell (treating $T$ as an element of $\rSSAF_{i_{\ell+1}}$).
		
		By Lemma \ref{lem:ledge} the entries of column $c-1$ of $\hat S$ immediately left of an empty position must be increasing from bottom to top. It follows that the cell $\D(\sh(S'))\setminus \D(\sh(S))$ is also the topmost empty position of $\hat S$ immediately right of an occupied cell (treating $S$ and an element of $\rSSAF_{i_{\ell+1}}$). Therefore $\rho^{-1}(Q')=T'$ which shows $T'\in\rSSAF_n$. We are done by induction on $\ell$.
	\end{proof}

	We can now show that the flagged RSK algorithm is indeed a bijection as with the classical algorithm.
	\begin{theorem}\label{thm:bijection}
		The map $\fRSK:\mathcal L_n\to \fpairs_n$ is a bijection. Moreover, for $L\in \mathcal L_n$ and $(S,T)=\fRSK(L)$, the column and row sums of $L$ respectively correspond to $\wt(S)$ and $\wt(T)$.
	\end{theorem}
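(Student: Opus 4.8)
The plan is to exploit Theorem~\ref{thm:rsk}, which realizes $\fRSK$ as the restriction of the classical bijection $\RSK\colon\mathcal M_n\to\pairs_n$ along the inclusion $\mathcal L_n\hookrightarrow\mathcal M_n$ and the injection $\tau\times\rho\colon\fpairs_n\hookrightarrow\pairs_n$ of Lemma~\ref{lem:subset}. Injectivity of $\fRSK$ is then immediate: if $\fRSK(L_1)=\fRSK(L_2)$, applying $\tau\times\rho$ and the commuting square gives $\RSK(L_1)=\RSK(L_2)$, hence $L_1=L_2$. The weight statement follows just as quickly once one notes that $\tau$ and $\rho$ are weight-preserving (they preserve column sets, hence the underlying multiset of entries): writing $(P,Q)=\RSK(L)$, we get $\wt(S)=\wt(\tau(S))=\wt(P)=\mathrm{col}(L)$ and $\wt(T)=\wt(\rho(T))=\wt(Q)=\mathrm{row}(L)$, since in the classical correspondence the weights of the insertion and recording tableaux are the column and row sums of the matrix.

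It remains to prove that $\fRSK$ is surjective, and for this it suffices to show: for every $(S,T)\in\fpairs_n$ the matrix $A:=\RSK^{-1}(\tau(S),\rho(T))$ is lower triangular. Indeed, granting this, Theorem~\ref{thm:rsk} gives $(\tau\times\rho)(\fRSK(A))=\RSK(A)=(\tau(S),\rho(T))=(\tau\times\rho)(S,T)$, and injectivity of $\tau\times\rho$ forces $\fRSK(A)=(S,T)$.

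I would prove this claim by induction on the number of cells of the common shape $\comp a$ of $S$ and $T$, peeling off the last biword letter $\binom{i}{j}$ of $A$ by a single step of reverse classical RSK and showing $j\le i$. That inequality drops out of the ``flag'' shapes of the two tableau families: in an SSKT the basement entry of row $r$ is $r$ and rows weakly decrease (as $\maj(\hat S)=0$), so every entry of $S$, hence every entry of $P:=\tau(S)$, is at most the index $L$ of the last nonempty row of $\comp a$; dually, in a reverse SSAF the basement entry of row $r$ is $r$ and rows weakly increase (as $\comaj(\hat T)=0$), so the entries of row $L$ of $T$ are $\ge L$, whence $\max T\ge L$. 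Since reverse RSK removes the cell of maximal value, $i=\max Q=\max(\rho(T))=\max T$, while $j$ is an entry bumped out of row~$1$ of $P$, so $j\le\max P\le L$; thus $j\le L\le i$. To close the induction one checks that the peeled pair $(P',Q')$ is again of the form $(\tau(S'),\rho(T'))$ with $(S',T')\in\fpairs_n$: then the hypothesis makes $\RSK^{-1}(P',Q')$ lower triangular, and $A$, obtained from it by appending $\binom{i}{j}$ with $j\le i$, is lower triangular too. On the $\rho$-side this is routine, since reverse RSK deletes the corner of $Q$ of maximal value in the rightmost column, which corresponds under $\rho$ to removing the last-added cell of $T$ (the $\max$-valued cell of $T$ in the rightmost column), and one verifies directly that this leaves a reverse SSAF with $\rho$-image $Q'$.

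The main obstacle is the $\tau$-side of that last step: one must show the reverse-bumped tableau $P'$ equals $\tau(S')$ for an SSKT $S'$ of shape $\sh(T')$ --- equivalently, that $\tau^{\dagger}_{\sh(T')}(P')$ is an SSKT, i.e.\ that ``reverse flagged insertion'' (popping the designated outer corner of $S$ and bumping outward through columns, the mirror of $\leftharpoondown_n$) is well defined with image in $\SSKT_n$ and commutes with classical reverse insertion via $\tau$. I expect this to require a case analysis paralleling the proof of Lemma~\ref{lem:insertion}, tracking the reverse-bumping route and ruling out new Type~I and Type~II co-inversion triples with Lemmas~\ref{lem:ledge} and~\ref{lem:flat}; it resists shortcuts such as guessing that the growth cell of $\leftharpoondown_n$ is always the topmost empty cell immediately right of an occupied cell, which already fails for general SSKT inputs. (Alternatively one could bypass surjectivity by invoking Theorem~\ref{thm:cauchy}: $\fRSK$ is a weight-preserving injection, and by that identity the $\wt$-fibers over $\mathcal L_n$ and over $\fpairs_n$ are finite sets of equal cardinality, forcing surjectivity --- but this undercuts the stated aim of reproving Theorem~\ref{thm:cauchy} here.)
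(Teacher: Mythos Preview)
Your injectivity and weight arguments are fine and match the paper's. The bound $j\le L\le i$ for the last peeled biword letter is also correct.

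The gap you flag on the $\tau$-side is genuine and is the whole difficulty: to close your induction you must show that the reverse-bumped $P'$ equals $\tau(S')$ for some $S'\in\SSKT_n(\sh(T'))$, i.e.\ that reverse flagged insertion is well defined on $\SSKT_n$ and commutes with classical reverse insertion via $\tau$. You do not prove this, and it is not a formality---it would require a case analysis at least as long as Lemma~\ref{lem:insertion}. Without it your induction does not close, since the inequality $j\le i$ for the last letter says nothing about earlier letters unless the peeled pair is again of the form $(\tau(S'),\rho(T'))$. (Your $\rho$-side sketch is plausible but also glosses over why the max-valued cell of $T$ in the relevant column is necessarily terminal in its row and why deleting it leaves a reverse SSAF.)

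The paper sidesteps reverse insertion altogether with a doubling trick. It embeds $\mathcal M_n\hookrightarrow\mathcal L_{2n}$ via $F\colon\binom{i}{j}\mapsto\binom{i+n}{j}$ and applies Theorem~\ref{thm:rsk} at size $2n$; one then computes that the induced map $\fpairs_n\to\fpairs_{2n}$ is the explicit shift $(S,T)\mapsto(S^{\uparrow},T^{\uparrow})$ (rows moved up by $n$, entries of $T$ incremented by $n$). For $W\in\mathcal M_n$ not lower triangular, say with first violation $i_t<j_t$, running \emph{forward} flagged insertion on $F(W)$ places $j_t$ in column~$1$ of row $i_t+n$, forcing the resulting SSKT to have an entry exceeding $i_t$ there; no shift $S^{\uparrow}$ with $S\in\SSKT_n$ can have this. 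Hence $\RSK(W)$ lies outside $\mathrm{image}(\tau\times\rho)$, giving $\mathrm{image}(\tau\times\rho)=\RSK(\mathcal L_n)$ and thus surjectivity of $\fRSK$---using only the forward direction already established.
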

	\begin{proof}
		The second assertion is by construction. Since $\RSK:\mathcal M_n\to \fpairs_n$ is a bijection, it is also immediate from Theorem \ref{thm:rsk} that $\fRSK$ is injective.
		
		Let $F: \mathcal M_n\to\mathcal L_{2n}$ be the map
		\[
		\begin{pmatrix} i_1&\cdots&i_{\ell}\\ j_1&\cdots&j_{\ell} \end{pmatrix}\mapsto\begin{pmatrix} i_1+n&\cdots&i_{\ell}+n\\ j_1&\cdots&j_{\ell} \end{pmatrix}
		\]
		to see we have the following commutative diagram
		\begin{center}
			\begin{tikzcd}[sep=large]
				\mathcal L_n \arrow[d, "\fRSK"] \arrow[r,hookrightarrow]
				& \mathcal M_n \arrow[d, "\RSK"]  \arrow[r,hookrightarrow, "F"]
				& \mathcal L_{2n} \arrow[d, "\fRSK"] \arrow[r,hookrightarrow]
				& \mathcal M_{2n} \arrow[d, "\RSK"] \\
				\fpairs_n \arrow[r,hookrightarrow, "\tau\times\rho"]
				& \pairs_n \arrow[r,hookrightarrow]
				&\fpairs_{2n} \arrow[r,hookrightarrow, "\tau\times\rho"]
				&\pairs_{2n}
			\end{tikzcd}
		\end{center}
		where the map $\pairs_n\to\fpairs_{2n}$ is $\fRSK\circ F\circ\RSK^{-1}$. 
		
		Let $G:\pairs_n\to\pairs_{2n}$ be the map that adds $n$ to each entry of the recording tableau. From definitions we have $\RSK\circ F= G\circ \RSK$ so $G$ is contextualized by the commutative diagram. Also let $(\tau\times\rho)^\dagger$ be the left inverse for $\tau\times\rho$ from Lemma \ref{lem:subset}. 
	
		For $(S,T)\in\fpairs_n$, say we increment the entries of $T$ by $n$ and shift each entry in both fillings up by $n$ rows. Call the result $(S^{\uparrow},T^\uparrow)$. Each entry in the first column of $T^\uparrow$ is still in the row matching its entry, so $T^\uparrow\in\rSSAF_{2n}$. The only non-trivial property to check to ensure $S^\uparrow \in \SSKT_{2n}$ is that there are no Type II co-inversion triples involving the basement column, but this follows from Lemma \ref{lem:ledge}. Notice that $(S^{\uparrow},T^\uparrow)$ have the same column sets as 
		\[
		(\tau\times\rho)^\dagger\circ G\circ (\tau\times\rho)(S,T)
		\]
		and so must be the object we get by following the diagram from $\fpairs_n$ to $\fpairs_{2n}$.
		
		Now instead take a biword $W=\begin{pmatrix} i_1&\cdots&i_{\ell}\\ j_1&\cdots&j_{\ell} \end{pmatrix}$ not associated with a lower triangular matrix, so $i_t<j_t$ for some minimal $t$. When we insert $j_t$ into the SSKT during the operation $\fRSK(F(W))$ it will be the largest entry thus far and therefore be placed in the first column of row $i_t+n$. Letting $(S',T')=\fRSK(F(W))$ this implies that the $S'(1,i_t+n)\ge j_t$. Then there is no $S\in\SSKT_n$ for which $S'=S^\uparrow$, as that would require $\hat S(1,i_t)\ge j_t>i_t=\hat S(0,i_t)$ hence $\maj(\hat S)>0$.
		
		The image of $\tau\times\rho:\fpairs_n\to \pairs_n$ is therefore contained in, and equal to, the image of $\RSK|_{\mathcal L_n}:\mathcal L_n\to \pairs_n$. We conclude $\fRSK$ is a bijection.
	\end{proof}
	\begin{remark}
		While $\fRSK$ is perhaps best thought of as a restriction of $\RSK$ as suggested by Theorem \ref{thm:rsk}, the proof of Theorem \ref{thm:bijection} is interesting in that it implies $\fRSK$ is actually equivalent to $\RSK$. Indeed, we have \[\RSK=G^{-1}\circ(\tau\times\rho)\circ \fRSK\circ F\]
		(abusing notation so that $G^{-1}$ is really just the obvious left inverse for $G$).
	\end{remark}
	
	Finally, our motivating application for this bijection was a new proof of Theorem \ref{thm:cauchy}.
	\begin{proof}[Proof of Theorem \ref{thm:cauchy}]
		We can see
		\[
		\prod_{1\le j\le i\le n}(1-x_iy_j)^{-1}=\sum_{L\in\mathcal L_n}x^{\mathrm{row}(L)}y^{\mathrm{col}(L)}.
		\]
		By Theorem \ref{thm:mac} we also have
		\[
		\sum_{\comp{a}\in \mathbb N^n}E_{\comp a}(X;\infty,\infty)E_{\comp a}(Y;0,0)=\sum_{(S,T)\in \fpairs_n}x^{\wt(T)}y^{\wt(S)}.
		\]
		The right hand sides are equal by Theorem \ref{thm:bijection}.
	\end{proof}
	
	%
	\section{Schubert Expansions}\label{sec:schubert}
	%
	
	Another generalization of the Schur polynomials are the Schubert polynomials $\schubert_w$ \cite{LS82}. They are indexed by permutations $w\in S_{\infty}$ that fix all but finitely many positive integers and represent Schubert classes in the cohomology of the complete flag variety. A permutation $v$ is a \newword{$k$-grassmanian} if $v(i)<v(i+1)$ whenever $i\ne k$. The $k$-grassmannians correspond to partitions $\lambda$ of at most $k$ parts by taking $\lambda_{k+1-i}=v(i)-i$, and we write $v=v(\lambda,k)$. We recover the Schur polynomials as
	\[
	s_{\lambda}(x_1,\ldots,x_k)=\schubert_{v(\lambda,k)}.
	\]
	One can find a discussion of Schubert polynomials in \cite{Ful97}. Using a Pieri rule we will see that the complete flagged homogeneous polynomials are a positive sum of Schubert polynomials.
	
	The \newword{$k$-Bruhat order} defined by Bergeron and Sottile \cite{BS98} is the transitive closure of the cover relation $w\le_k wt_{i,j}$ whenever $\ell(w t_{i,j}) = \ell(w) + 1$ and $i\le k<j$. The following Pieri rule was conjectured by Bergeron and Billey \cite{BB93}, before being proved geometrically by Sottile \cite{Sot96} and combinatorially by Kogan and Kumar \cite{KK02}. 
	
	\begin{theorem}[\cite{Sot96}]
		For $u$ a permutation and $m,k$ positive integers, we have
		\[
		\schubert_u \cdot \schubert_{v((m),k)} = \sum_{\substack{ u \le_k w \\ w = u t_{i_1,j_1} \cdots t_{i_m,j_m} \\ j_1,\ldots, j_m \ \text{distinct} }} \schubert_{w} .
		\]
		\label{thm:pieri}
	\end{theorem}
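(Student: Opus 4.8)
This Pieri rule is a substantial known result; I describe the route I would take, which is essentially the geometry underlying Sottile's proof \cite{Sot96}. The starting observation is that $v((m),k)$ is $k$-Grassmannian, so by the identity $s_\lambda(x_1,\dots,x_k)=\schubert_{v(\lambda,k)}$ recorded above we have $\schubert_{v((m),k)}=s_{(m)}(x_1,\dots,x_k)=h_m(x_1,\dots,x_k)$. Under the projection $\pi\colon \mathrm{Fl}_n\to\mathrm{Gr}(k,n)$ sending a flag to its $k$-dimensional part, this class is the pullback $\pi^\ast\sigma_{(m)}$ of the special single-row Schubert class; equivalently $\schubert_{v((m),k)}=[X_{v((m),k)}]$, where $X_{v((m),k)}=\{F_\bullet : \dim(F_k\cap E)\ge 1\}$ for a fixed subspace $E$ of the appropriate dimension. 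So the identity to prove is the Schubert-basis expansion of $[X_u]\cdot[X_{v((m),k)}]$ in $H^\ast(\mathrm{Fl}_n)$.

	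The plan is to realize this product as an intersection of $X_u$, taken relative to the standard flag, with a generic translate $g\cdot X_{v((m),k)}=\{F_\bullet:\dim(F_k\cap gE)\ge 1\}$. First I would show that for generic $g$ this intersection is reduced with every irreducible component a Schubert variety, so that all structure constants are $0$ or $1$ and the product is the sum of the corresponding $\schubert_w$. Second, I would identify which $w$ occur using the Bergeron--Sottile description of the $k$-Bruhat order \cite{BS98}: saturated chains $u\lessdot_k\cdots\lessdot_k w$ correspond to ways of adding $m$ boxes to a partition sitting inside the $k\times(n-k)$ rectangle, multiplication by $\sigma_{(m)}$ selects exactly those additions that form a size-$m$ \emph{horizontal} strip, and a horizontal strip is precisely a chain whose covering transpositions $t_{i_1,j_1},\dots,t_{i_m,j_m}$ have pairwise distinct upper indices $j_1,\dots,j_m$---which is the condition in the statement.

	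The main obstacle is the first step: proving the intersection is transverse and that its components are themselves Schubert varieties. A crude dimension count does not suffice; this requires a careful degeneration of $g\cdot X_{v((m),k)}$ inside $\mathrm{Fl}_n$, which is the technical heart of Sottile's argument. To avoid geometry I would instead follow Kogan and Kumar \cite{KK02}: model $\schubert_u$ by its pipe dreams, observe that the monomials of $h_m(x_1,\dots,x_k)$ are indexed by size-$m$ multisets in $[k]$, and construct a weight-preserving bijection from pairs consisting of a pipe dream of $u$ together with such a multiset onto $\bigsqcup_w\{\text{pipe dreams of }w\}$ with $w$ ranging over the claimed set; the obstacle then becomes showing this bijection is well defined and surjective, which is handled by an explicit set of local operations on pipe dreams. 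A third, purely algebraic attempt would iterate Monk's rule (the $m=1$ case) together with the recursion $h_m(x_1,\dots,x_k)=h_m(x_1,\dots,x_{k-1})+x_k\,h_{m-1}(x_1,\dots,x_k)$, but then the difficulty migrates to controlling the extensive cancellation among the spurious non-chain terms---exactly the bookkeeping that the structural description of $\le_k$ is designed to organize.
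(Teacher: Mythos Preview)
The paper does not prove this theorem; it is quoted from the literature with citation \cite{Sot96} (and the paper also remarks that Kogan--Kumar \cite{KK02} gave a combinatorial proof). There is therefore no ``paper's own proof'' to compare against: the authors simply use Theorem~\ref{thm:pieri} as a black box to deduce the Schubert positivity corollary and Theorem~\ref{thm:h-schubert}.

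Your proposal is a reasonable high-level survey of the known routes---Sottile's geometric transversality argument, the Kogan--Kumar pipe-dream bijection, and an iterated Monk approach---and you correctly flag the genuine technical bottleneck in each. As a proof \emph{plan} this is fine and honest about what it does not do; as an actual proof it would of course need one of those routes carried out in full. For the purposes of this paper, matching what the authors do means citing the result rather than reproving it.
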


	We saw the $\h$-basis does not contain the complete homogeneous symmetric polynomials and so is not truly a multiplicative basis.	Its structure constants can even be negative. For instance
	\[ \h_{(0,1)}^2 = \h_{(0,2)} + \h_{(1,1)} - \h_{(2,0)}. \]
	
	Interestingly, it follows from Theorem~\ref{thm:pieri} that the product of $\h$'s remains Schubert positive.
	
	\begin{corollary}
		Given compositions $\comp a$ and $\comp b$, we have
		\[ \h_{\comp a} \h_{\comp b} = \sum_{w} c_{{\comp a},{\comp b}}^{w} \schubert_w \]
		where $c_{{\comp a},{\comp b}}^{w}$ are nonnegative integers.
	\end{corollary}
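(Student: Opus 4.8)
The plan is to recognize $\h_{\comp a}\h_{\comp b}$ as a finite product of Schubert polynomials of the very special form that appears on the left-hand side of the Pieri rule, and then to apply Theorem~\ref{thm:pieri} one factor at a time. The starting observation is that a single-row complete homogeneous symmetric polynomial is itself a Schubert polynomial: since the partition $(m)$ has one part, $h_m(x_1,\ldots,x_k)=s_{(m)}(x_1,\ldots,x_k)=\schubert_{v((m),k)}$ for every $k\ge 1$, where for $m=0$ this reads $\schubert_{\mathrm{id}}=1$. Substituting this into Definition~\ref{def:nonsym-h} gives
\[
\h_{\comp a}=\prod_{i\ge 1}h_{\comp a_i}(x_1,\ldots,x_i)=\prod_{i\ge 1}\schubert_{v((\comp a_i),i)},
\]
a product with only finitely many nontrivial factors, one for each nonzero part of $\comp a$. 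Hence $\h_{\comp a}\h_{\comp b}$ is a product $\schubert_{v((m_1),k_1)}\cdots\schubert_{v((m_r),k_r)}$ of Schubert polynomials, each indexed by a grassmannian permutation attached to a one-row shape.

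It then suffices to prove, by induction on $r$, that any such product expands nonnegatively in the Schubert basis. The base case $r=0$ is the empty product $1=\schubert_{\mathrm{id}}$. For the inductive step, write $\schubert_{v((m_1),k_1)}\cdots\schubert_{v((m_{r-1}),k_{r-1})}=\sum_u d_u\,\schubert_u$ with all $d_u$ nonnegative integers, and multiply by the last factor; Theorem~\ref{thm:pieri} applied to each term $\schubert_u\cdot\schubert_{v((m_r),k_r)}$ rewrites it as a nonnegative (indeed $0/1$) sum of Schubert polynomials, so the full product is $\sum_u d_u\big(\text{nonnegative sum of }\schubert_w\big)$, again a nonnegative integer combination. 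Taking $r$ to be the total number of nonzero parts of $\comp a$ and $\comp b$ yields the claim, and in fact exhibits the $c^w_{\comp a,\comp b}$ as an explicit iterated Pieri count.

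The argument is essentially immediate once this reduction is in place. The only point that requires a little care is that each factor $h_{\comp a_i}(x_1,\ldots,x_i)$ is genuinely a single Schubert polynomial $\schubert_{v((\comp a_i),i)}$ rather than a sum — this is exactly the one-row instance of the identity $s_\lambda(x_1,\ldots,x_k)=\schubert_{v(\lambda,k)}$, valid because $v((m),k)$ is a well-defined $k$-grassmannian for every $k\ge 1$ (including $k=1$, where it gives $\schubert_{v((m),1)}=x_1^m$). I do not anticipate a genuine obstacle here; the content of the corollary is carried entirely by the Pieri rule of Theorem~\ref{thm:pieri}.
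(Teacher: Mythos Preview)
Your proof is correct and follows essentially the same approach as the paper: factor $\h_{\comp a}\h_{\comp b}$ into a product of one-row Schubert polynomials $\schubert_{v((m),k)}$ via the identity $h_m(x_1,\ldots,x_k)=\schubert_{v((m),k)}$, then apply the Pieri rule of Theorem~\ref{thm:pieri} iteratively. The paper's argument is more terse, but your explicit induction on the number of factors is exactly the mechanism it has in mind.
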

	\begin{proof}
		For a weak composition $0^k\times(m)$ with at most one nonzero part, we have
		\[
		\h_{0^k\times(m)}=s_{(m)}(x_1,\ldots,x_k)=\schubert_{v((m),k)}.
		\]
		Any product of complete flagged homogeneous polynomials is therefore a product of such Schubert polynomials which has a non-negative Schubert expansion by Theorem \ref{thm:pieri}.
	\end{proof}

	We will explicitly describe the Schubert expansion of a single $\h_{\comp a}$ polynomial. To reduce notation we say $w/u$ is a \newword{horizontal $m$-strip in $k$-Bruhat order} if $u\le_k w$ and there exists a saturated chain in $k$-Bruhat order of length $m$, say exemplified by $w = u t_{i_1,j_1}\cdots t_{i_m,j_m}$, where $j_1,\ldots,j_m$ are all distinct.

	\begin{theorem}\label{thm:h-schubert}
		For a weak composition $\comp b = (\comp b_1,\ldots,\comp b_m)$, we have
		\[ \h_{\comp b} = \sum_{w} C_{w,{\comp b}} \schubert_w, \]
		where $C_{w,{\comp b}}$ is the number of sequences $\mathrm{id} = w^{(0)},\ldots,w^{(m)} = w$ such that each $w^{(k)}/w^{(k-1)}$ is a horizontal $(\comp b_k)$-strip in $k$-Bruhat order.
	\end{theorem}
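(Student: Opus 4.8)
The strategy is to realize $\h_{\comp b}$ as an ordered product of Schubert polynomials, one for each part of $\comp b$, and then to strip these factors off one at a time using the Pieri rule of Theorem~\ref{thm:pieri}. By Definition~\ref{def:nonsym-h} we have $\h_{\comp b}=\prod_{i=1}^{m}h_{\comp b_i}(x_1,\ldots,x_i)$, and since $h_{\comp b_i}=s_{(\comp b_i)}$ is the Schur polynomial of a one-row shape, the identity $s_\lambda(x_1,\ldots,x_k)=\schubert_{v(\lambda,k)}$ recalled at the start of this section gives
\[
\h_{\comp b}=\schubert_{v((\comp b_1),1)}\,\schubert_{v((\comp b_2),2)}\cdots\schubert_{v((\comp b_m),m)}.
\]
Commutativity of the polynomial ring lets me evaluate this product by multiplying in the factors in the order $i=1,2,\ldots,m$, and this ordering is exactly what makes the $k$-Bruhat order appear at the $k$-th step.

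I would then induct on the number of factors. Suppose the partial product $\prod_{i=1}^{k-1}\schubert_{v((\comp b_i),i)}$ has been expanded as $\sum_u C'_u\,\schubert_u$, where $C'_u$ counts the sequences $\mathrm{id}=w^{(0)},\ldots,w^{(k-1)}=u$ with each $w^{(j)}/w^{(j-1)}$ a horizontal $(\comp b_j)$-strip in $j$-Bruhat order. Multiplying through by $\schubert_{v((\comp b_k),k)}$ and invoking Theorem~\ref{thm:pieri} with its $(m,k)$ specialized to $(\comp b_k,k)$ --- which, in the horizontal-strip language set up just before the present statement, says $\schubert_u\cdot\schubert_{v((\comp b_k),k)}=\sum_w\schubert_w$ over all $w$ with $w/u$ a horizontal $(\comp b_k)$-strip in $k$-Bruhat order --- each term $C'_u\schubert_u$ contributes $\schubert_w$ once for every such $w$. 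Appending $w$ to the chain counted by $C'_u$ realizes precisely the length-$k$ sequences described in the statement, so after summing over $u$ the coefficient of $\schubert_w$ in the new partial product is $C_{w,(\comp b_1,\ldots,\comp b_k)}$. With $k=m$ this is the theorem; the base case is $1=\schubert_{\mathrm{id}}$, counted by the empty chain.

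I do not expect a genuine obstacle here: all the geometry and the hard combinatorics sit inside Theorem~\ref{thm:pieri}, and what remains is bookkeeping. The one point to handle explicitly is the degenerate case $\comp b_k=0$, where $v((0),k)=\mathrm{id}$ so the $k$-th factor is $\schubert_{\mathrm{id}}=1$ and contributes nothing; this is consistent with the $m=0$ instance of Theorem~\ref{thm:pieri}, in which the empty transposition word forces $w=u$, i.e.\ the only horizontal $0$-strip in $k$-Bruhat order out of $u$ is $u$ itself. The other thing to stay alert to is the bookkeeping itself --- the matching of the index $i$ on the alphabet $x_1,\ldots,x_i$ in the $i$-th factor of $\h_{\comp b}$ with the use of $i$-Bruhat order when that factor is multiplied in, which is exactly why the theorem is phrased with $k$-Bruhat order at the $k$-th step.
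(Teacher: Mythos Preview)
Your proposal is correct and follows essentially the same route as the paper: both arguments factor $\h_{\comp b}$ as $\prod_i \schubert_{v((\comp b_i),i)}$, induct on the number of factors, and apply Theorem~\ref{thm:pieri} at the $k$-th step to produce the horizontal $(\comp b_k)$-strip condition in $k$-Bruhat order. Your treatment is slightly more explicit about the $\comp b_k=0$ degeneracy, but otherwise the two proofs are the same.
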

	
	\begin{proof}
		We proceed by induction on $m$. For $m=0$ (i.e. $\comp b$ has no nonzero parts) we have
		\[
		\h_{\comp b}=1=s_\emptyset=\schubert_{\mathrm{id}}.
		\]
		The only sequence enumerated by a $C_{w,\comp b}$ is $\mathrm{id}=w^{(0)}$ so $C_{\mathrm{id},\comp b}=1$ is the only nonzero value. This proves the base case.
		
		Now take $m>0$ and assume the result holds for weak compositions of length $m-1$. We have
		\[
		\h_{\comp b}=\h_{(\comp b_1,\ldots,\comp b_{m-1})}s_{(\comp b_m)}(x_1,\ldots,x_m)\\
		=\sum_{w} C_{w,{(\comp b_1,\ldots,\comp b_{m-1})}} \schubert_w \schubert_{v((\comp b_m),m)}
		\]
		and applying Theorem \ref{thm:pieri} yields the result by induction.  
	\end{proof}

	%
	\section{The Inverse Kostka Analogue}\label{sec:inverse}
	%
	
	Eğecioğlu and Remmel found a signed combinatorial formula for the entries of the classical inverse Kostka matrix \cite{ER90}. We generalize their notion of a rim hook tabloid from the context of Ferrers diagrams to key diagrams in order to analogously expand the key polynomials into the basis of complete flagged homogeneous polynomials.
	
	First let us reconstruct the definition of a rim hook. \textbf{Young's lattice} is the set of partitions with length at most $n$ partially ordered by the relation $\lambda\preceq_Y \mu$ whenever $\lambda_i\le\mu_i$ for all $1\le i\le n$. We will say two cells are \newword{connected} if they are either vertically or horizontally adjacent. Taking the transitive closure of the ``connected'' relation decomposes a diagram into equivalence classes, which we call \newword{connected components}. We say the diagram is connected if there is a unique connected component.
	\begin{definition}\label{def:rim-hook}
		A \newword{rim hook} $R$ of a partition $\mu$ is a subset of the English Ferrers diagram $\D(\rev(\mu))$ such that
		\begin{enumerate}
			\item $R$ is connected,
			\item $\D(\rev(\mu) )\setminus R=\D(\rev(\lambda))$ for some $\lambda\preceq_Y \mu$, and
			\item $R$ does not contain a quadruple of cells $(c,r),(c+1,r),(c,r+1),(c+1,r+1)$.
		\end{enumerate}
	\end{definition}

	\begin{figure}[ht]
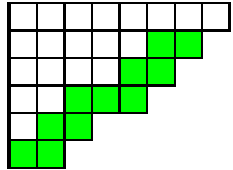

		\vline
		\begin{ytableau}
			~&~&~&~&~&~&~&~\\
			~&~&~&~&~&*(green)~&*(green)~\\
			~&~&~&~&*(green)~&*(green)~\\
			~&~&*(green)~&*(green)~&*(green)~\\
			~&*(green)~&*(green)~\\
			*(green)~&*(green)~
		\end{ytableau}
		\caption{\label{fig:rimhook}A rim hook of $\lambda=(8,7,6,5,3,2)$.}
	\end{figure}

	Now the replacement for Young's lattice on weak compositions is the following partial order.
	\begin{definition}[\cite{AW22}]\label{def:key-poset}
		The \textbf{key poset} on weak compositions of length at most $n$ is the partial order defined by $\comp a\preceq\comp b$ whenever
		\begin{enumerate}[(i)]
			\item $\comp a_i\le\comp b_i$ for all $1\le i\le n$, and
			\item if $\comp a_i>\comp a_j$ for some $1\le i<j\le n$ then $\comp b_i>\comp b_j$.
		\end{enumerate}
	\end{definition}
	As an example
	\[
	(0,6,0,1,2,8,4)\preceq(3,7,0,2,5,8,6)\quad\text{but}\quad (0,6,0,1,5,8,2)\not\preceq(3,7,0,2,5,8,6).
	\]

	We say that two cells are \newword{weakly connected} if they share a column or lie in adjacent columns with the cell on the left in a weakly higher row (cf. attacking cells). As before, this generates an equivalence relation on the cells in a diagram, whose classes we call \newword{weakly connected components}, and we say the diagram is weakly connected if there is a unique weakly connected component. An example of the concept is illustrated in Figure \ref{fig:connected}.
	
	\begin{figure}[ht]
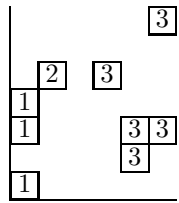

		\vline
		\begin{ytableau}
			\none&\none&\none&\none&\none&3\\
			\none\\
			\none&2&\none&3\\
			1&\none&\none&\none\\
			1&\none&\none&\none&3&3\\
			\none&\none&\none&\none&3 \\
			1 \\
			\hline
		\end{ytableau}
		\caption{\label{fig:connected}A diagram with enumerated weakly connected components.}
	\end{figure}
	
	Rim hooks are now generalized by the following definition.
	\begin{definition}\label{def:snake}
		A \textbf{snake} $S$ of a weak composition $\comp b$ is a subset of the key diagram $\D(\comp b)$ such that
		\begin{enumerate}
			\item $S$ is weakly connected,
			\item $\D(\comp b)\setminus S=\D(\comp a)$ for some weak composition $\comp a\preceq\comp b$, and
			\item $S$ does not contain a triple of cells $(c,s),(c+1,s),(c+1,r)$ with $r<s$.
		\end{enumerate}
	\end{definition}

	\begin{figure}[ht]
		\vline
		\begin{ytableau}
			~&~&~&~&*(green)~&*(green)~\\
			~&~&~&~&~&~&~&~\\
			~&~&*(green)~&*(green)~&*(green)~\\
			~&*(green)~\\
			\none\\
			~&~&~&~&~&~&*(green)~\\
			*(green)~&*(green)~&*(green)~\\
			\hline
		\end{ytableau}
		\caption{\label{fig:snake}A snake of $\comp b=(3,7,0,2,5,8,6)$.}
	\end{figure}
	
	The next result shows that the generalization is a natural one.
	\begin{proposition}\label{prop:hook}
		Let $\mu$ be a partition. The snakes of $\rev(\mu)$ are exactly the rim hooks of $\mu$.
	\end{proposition}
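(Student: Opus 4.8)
The plan is to check that, writing $\comp b=\rev(\mu)$ (a weakly increasing composition, since $\mu$ is a partition), the three clauses defining a snake of $\comp b$ agree one by one with the three clauses defining a rim hook of $\mu$; both are conditions on the same kind of subset of the same diagram $\D(\rev(\mu))$. Indeed, a subset $S\subseteq\D(\comp b)$ for which $\D(\comp b)\setminus S$ is again of the form $\D(\comp a)$ must meet each row $r$ in a right-justified segment $(\comp a_r+1,r),\ldots,(\comp b_r,r)$; this is the shared ``skew-shape'' form, and every comparison below is understood for such $S$. For the removal clause (item~(2) in both definitions) I would show that, since $\comp b$ is weakly increasing, $\comp a\preceq\comp b$ in the key poset (Definition~\ref{def:key-poset}) if and only if $\comp a$ is weakly increasing with $\comp a_i\le\comp b_i$ for all $i$: clause~(i) of the key poset is exactly $\comp a_i\le\comp b_i$, while clause~(ii) asks that $\comp a_i>\comp a_j$ with $i<j$ force $\comp b_i>\comp b_j$, a conclusion that is never available because $\comp b_i\le\comp b_j$ whenever $i<j$, so (ii) holds precisely when there is no such pair, i.e.\ when $\comp a$ is weakly increasing. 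A weakly increasing $\comp a$ is exactly $\rev(\lambda)$ for the partition $\lambda=\rev(\comp a)$, and then $\comp a_i\le\comp b_i$ for all $i$ becomes $\lambda_i\le\mu_i$ for all $i$, i.e.\ $\lambda\preceq_Y\mu$; so the two removal clauses impose the same constraint on $S$.

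For the connectivity clause (item~(1)), connectedness always implies weak connectedness, since vertical adjacency is a special case of sharing a column and horizontal adjacency is a special case of lying in adjacent columns with the left cell weakly higher. For the converse, restricted to skew shapes of the above form, I would prove the key geometric fact: if $(c,r),(c+1,s)\in S$ with $r\ge s$, then $(c,t)\in S$ for every $s\le t\le r$. This is a pure monotonicity computation --- $(c,r)\in S$ gives $\comp a_t\le\comp a_r<c$ for $t\le r$, while $(c+1,s)\in S$ gives $\comp b_t\ge\comp b_s\ge c+1$ for $t\ge s$, so $(c,t)$ lies in $\D(\comp b)\setminus\D(\comp a)=S$ --- and the analogous statement holds when two cells of $S$ merely share a column. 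Consequently a single weak-adjacency step between cells of $S$ is always realized by an honest connected path inside $S$, namely a vertical run followed, if needed, by one horizontal step. Passing to transitive closures, $S$ is weakly connected if and only if it is connected.

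For the forbidden-configuration clause (item~(3)), a $2\times2$ block $(c,r),(c+1,r),(c,r+1),(c+1,r+1)\subseteq S$ visibly contains the snake-forbidden triple $(c,r+1),(c+1,r+1),(c+1,r)$. Conversely, suppose $S$ contains $(c,s),(c+1,s),(c+1,r)$ with $r<s$ (so $s\ge 2$ as $r\ge1$); then $(c+1,r)\in S$ gives $\comp b_{s-1}\ge\comp b_r\ge c+1$ using $r\le s-1$, and $(c,s)\in S$ gives $\comp a_{s-1}\le\comp a_s\le c-1$, so both $(c,s-1)$ and $(c+1,s-1)$ lie in $\D(\comp b)\setminus\D(\comp a)=S$ and, together with $(c,s),(c+1,s)$, form a $2\times2$ block. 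Thus, on our skew shapes, $S$ avoids the snake triple if and only if it avoids every $2\times2$ block. Combining the three equivalences identifies the snakes of $\rev(\mu)$ with the rim hooks of $\mu$.

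The only step that essentially uses that $\mu$ is a partition is the connectivity clause: the real content there is that on a skew shape whose inner and outer boundaries are both weakly increasing, ``weak connectedness'' collapses to ordinary connectedness. I expect this to be the main obstacle, although it still reduces to the one-line monotonicity estimate above; the removal and forbidden-configuration clauses are essentially bookkeeping about right-justified row segments and the weak monotonicity of $\comp a$ and $\comp b$.
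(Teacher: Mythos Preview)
Your proof is correct and follows essentially the same approach as the paper: both arguments identify the key-poset and Young's-lattice removal clauses via the observation that $\comp a\preceq\rev(\mu)$ forces $\comp a$ to be weakly increasing, use the same monotonicity computation on the skew shape to pass between weak connectedness and connectedness, and derive a $2\times2$ block from a snake triple by showing $(c,s-1),(c+1,s-1)\in S$. The only difference is organizational---you prove the three clauses equivalent one by one, whereas the paper packages the same computations into two implications (rim hook $\Rightarrow$ snake and snake $\Rightarrow$ rim hook)---but the underlying content is identical.
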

	\begin{proof}
		Suppose $\lambda\preceq_Y\mu$ for a partition $\lambda$. Then $\rev(\lambda)\preceq\rev(\mu)$ as (ii) in Definition \ref{def:key-poset} is vacuously satisfied. Conversely, if $\comp b\preceq \rev(\mu)$ then the fact that the parts of $\rev(\mu)$ are weakly increasing implies the same is true for $\comp b$. Thus $\comp b=\rev(\lambda)$ for a partition $\lambda$. That is, $\lambda\mapsto \rev(\lambda)$ is an injective morphism of posets and its image is closed under ``going down'' in the poset.
		
		Now say that $R=\D(\rev(\mu))\setminus\D(\rev(\lambda))$ is a rim hook of $\mu$ with $\lambda\preceq_Y\mu$. A connected pair of cells is also weakly connected, so $R$ is weakly connected. We have also seen that $\rev(\lambda)\preceq\rev(\mu)$. 
		
		Suppose $R$ contains a triple $(c,s),(c+1,s),(c+1,r)$ of cells with $r<s$. Then $(c,s),(c+1,s)\notin \D(\rev(\lambda))$ which implies $(c,s-1),(c+1,s-1)\notin \D(\rev(\lambda))$ since the parts of $\rev(\lambda)$ weakly increase. However, $(c,s-1),(c+1,s-1)\in \D(\rev(\mu))$ since $\rev(\mu)_s\ge\rev(\mu)_r\ge c+1$. In particular $(c,s),(c+1,s),(c,s-1),(c+1,s-1)$ all belong to $R$ which contradicts that $R$ is a rim hook. We must conclude that $R$ is a snake of $\rev(\mu)$.
		
		On the other hand, suppose $S$ is a snake of $\rev(\mu)$. We have seen that (2) in Definition \ref{def:snake} implies $S=\D(\rev(\mu))\setminus \D(\rev(\lambda))$ for a partition $\lambda\preceq_Y\mu$. Condition (3) in Definition \ref{def:snake} is actually stronger than (3) in Definition \ref{def:rim-hook} so the latter is satisfied. It remains to argue $S$ is connected.
		
		Say $u,v\in S$ form a weakly connected pair. Suppose first that they lie in the same column with $u$ below $v$. Since the parts of $\rev(\mu)$ weakly increase and $u\in\D(\rev(\mu))$ we have that every cell above $u$ in the same column is in $\D(\rev(\mu))$. Similarly, since $v\notin\D(\rev(\lambda))$ every cell below $v$ in the same column is not in $\D(\rev(\lambda))$. Then there is a connected vertical strip of cells in $S$ between $u$ and $v$ putting them in the same connected component.
		
		Now suppose $u,v$ lie in adjacent columns, say with $v$ left of, and weakly above $u$. Take the cell $w$ immediately left of $u$. As before, $v\notin\D(\rev(\lambda))$ implies $w\notin\D(\rev(\lambda))$. So $w\in S$, $(w,u)$ is connected, and $w$ and $v$ share a connected component by the above argument. It follows that $u$ and $v$ lie in the same component. Therefore the unique weakly connected component of $S$ is also the unique connected component of $S$, so $S$ is a rim hook.
	\end{proof}
	
	A snake is \textbf{special} if it is empty or contains the lowest cell in column 1 of the key diagram. The objects that index our expansion into the $\h$ basis are the special snake tabloids.
	
	\begin{definition}\label{def:snake-tabloid}
		A \textbf{special snake tabloid} $(S_1,\ldots,S_n)$ of shape $\comp b$ with at most $n$ parts is a decomposition $\D(\comp b)=S_1\sqcup\cdots\sqcup S_n$ such that each $S_i$ is a special snake of $\D(\comp b)\setminus(S_1\sqcup\cdots\sqcup S_{i-1})$, with $S_i=\emptyset$ if and only if row $i$ of this subdiagram is empty.
	\end{definition}
	
	The \textbf{weight} of a special snake tabloid $U=(S_1,\ldots,S_n)$ is the weak composition
	\begin{equation}
		\wt(U)=(|S_1|,\ldots, |S_n|).
	\end{equation}
	
	The \textbf{height} of a snake $S$, denoted $\height(S)$, is the number of rows in which $S$ contains a cell, except where $S=\emptyset$ in which case the height is 1. The \textbf{sign} of a snake is 
	\begin{equation}
		\sgn(S)=(-1)^{\height(S)-1},
	\end{equation}
	and the sign of a special snake tabloid is the product of signs of its snakes.
	
	\begin{figure}[ht]
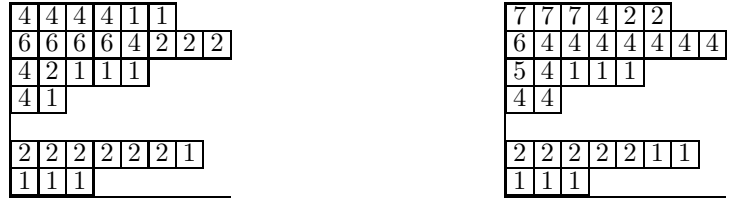

		\vline
		\begin{ytableau}
			4&4&4&4&1&1\\
			6&6&6&6&4&2&2&2\\
			4&2&1&1&1\\
			4&1\\
			\none\\
			2&2&2&2&2&2&1\\
			1&1&1\\
			\hline
		\end{ytableau}\hspace{100pt}
		\vline
		\begin{ytableau}
			7&7&7&4&2&2\\
			6&4&4&4&4&4&4&4\\
			5&4&1&1&1\\
			4&4\\
			\none\\
			2&2&2&2&2&1&1\\
			1&1&1\\
			\hline
		\end{ytableau}
		\caption{\label{fig:tabloids}Two special snake tabloids of shape $(3,7,0,2,5,8,6)$, with respective weights $(10,10,0,7,0,4,0)$ and $(8,7,0,11,1,1,3)$ and signs $-1$ and $+1$.}
	\end{figure}
	
	We can now state our theorem.
	\begin{theorem}\label{thm:snakes}
		For a weak composition $\comp b$ we have
		\begin{equation}\label{eq:snake}
			\key_{\comp b}=\sum_{\comp a\in\mathbb N^n}\tilde K^{-1}_{\comp a\comp b}\h_{\comp a}
		\end{equation}
		where
		\[
		\tilde K^{-1}_{\comp a\comp b}=\sum_U \sgn(U)
		\]
		summed over special snake tabloids $U$ of weight $\comp a$ and shape $\comp b$.
	\end{theorem}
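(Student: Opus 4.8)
The plan is to deduce \eqref{eq:snake} from a single matrix identity and then prove that identity by a sign-reversing involution, generalizing the argument of Eğecioğlu and Remmel in the classical case. By Theorem \ref{thm:key-expand} the array $(\tilde K_{\comp a\comp b})$ is the transition matrix from the basis $\{\kappa_{\comp a}\}$ to the basis $\{\h_{\comp a}\}$, hence invertible; in fact it is block-unitriangular, since $\tilde K_{\comp a\comp b}=0$ unless $\sort(\comp b)$ is dominated by $\sort(\comp a)$ and $\tilde K_{\comp a\comp b}=\delta_{\comp a\comp b}$ whenever $\sort(\comp a)=\sort(\comp b)$ (both facts coming from Mason's identity $K_{\lambda\comp b}=\sum_{\sort(\comp a)=\lambda}\tilde K_{\comp a\comp b}$ together with the existence of the reverse SSAF of shape $\comp a$ whose row $r$ consists of $r$'s), so its inverse has integer entries. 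It therefore suffices to show that the integers $\tilde K^{-1}_{\comp a\comp b}$ defined by the signed snake-tabloid count satisfy
\[
\sum_{\comp c}\tilde K_{\comp a\comp c}\,\tilde K^{-1}_{\comp c\comp b}=\delta_{\comp a\comp b},
\]
for then $(\tilde K^{-1}_{\comp a\comp b})$ is forced to equal the matrix inverse of $(\tilde K_{\comp a\comp b})$, which is exactly the content of \eqref{eq:snake}.

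Unwinding the definitions, the left-hand side equals $\sum_{(T,U)}\sgn(U)$, summed over all pairs $(T,U)$ in which $T$ is a reverse SSAF of shape $\comp a$, $U=(S_1,\dots,S_n)$ is a special snake tabloid of shape $\comp b$, and $\wt(T)=\wt(U)$. The goal then becomes to produce a shape-preserving, sign-reversing involution $\iota$ on the set of such pairs, so that only pairs with $\sgn(U)=+1$ survive, and to show that a surviving pair exists if and only if $\comp a=\comp b$, in which case it is unique. (Because $T$ always contributes $+1$, ``sign-reversing'' means the involution flips $\sgn(U)$.)

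To build $\iota$, I would peel the snakes $S_1,\dots,S_n$ off $U$ in order while reading $T$ in parallel, and locate the first index $i$ at which $S_i$ disagrees with the snake that $T$ ``prescribes'' at step $i$, the prescription coming from the rows of $T$ occupied by the letter $i$ and the column-monotonicity of reverse SSAF. At this first disagreement one applies a local surgery that either splits $S_i$ into two consecutive special snakes or fuses $S_i$ with an adjacent snake; such a move changes exactly one height by one, hence flips $\sgn(U)$, and leaves $\comp a$, $\comp b$, $T$, and the weight fixed, and it is undone by running the same recipe on the modified tabloid, so $\iota^2=\mathrm{id}$. A pair is fixed precisely when $S_i$ matches $T$'s prescription for every $i$; one then argues that this pins down $\comp b=\comp a$ and that there is a single such pair, of sign $+1$, yielding $\delta_{\comp a\comp b}$.

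The crux, and the step I expect to be the main obstacle, is making the surgery rigorous: one must verify that each split or fuse produces an honest special snake tabloid of the same shape, i.e. that weak connectivity, the ``special'' condition (each successive snake contains the lowest cell of the first column of the current sub-diagram), and the forbidden-triple condition (3) of Definition \ref{def:snake} are all preserved, and that the disagreement index is genuinely unchanged by the move so that $\iota$ is well defined. A subtlety absent in the rim-hook setting is that snakes are only weakly connected and may slip across empty rows, so every local move must be organized around weakly connected components rather than ordinary adjacency; the row-comparison lemmas of Section \ref{sec:nonsym-macdonald} (in particular Lemmas \ref{lem:ledge} and \ref{lem:flat}) should supply the control needed to keep the moves legal, and Proposition \ref{prop:hook} should let the whole argument specialize to recover Eğecioğlu and Remmel's classical formula.
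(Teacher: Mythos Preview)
Your reduction to the matrix identity $\sum_{\comp c}\tilde K_{\comp a\comp c}\,\tilde K^{-1}_{\comp c\comp b}=\delta_{\comp a\comp b}$ is correct, and an involution on pairs $(T,U)$ with $\sh(T)=\comp a$, $\sh(U)=\comp b$, and $\wt(T)=\wt(U)$ is a legitimate way to attack it. But the surgery you propose is incompatible with this setup: splitting $S_i$ into two snakes, or fusing $S_i$ with a neighbor, changes the vector $\wt(U)=(|S_1|,\dots,|S_n|)$, so the output pair violates $\wt(T)=\wt(U)$ and lies outside your index set. Your assertion that the move ``leaves \dots the weight fixed'' is false for any nontrivial split or fuse. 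A workable involution here must either redistribute cells among the $S_j$ while keeping every $|S_j|$ constant, or modify $T$ in tandem so that $\wt(T)$ tracks the new $\wt(U)$; you do neither, and the ``prescription'' coming from $T$ is not specified precisely enough to guess what the intended replacement would be. This is not merely a rigor gap but a structural one: the map you describe does not land in the right set, so it cannot be an involution on it.

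The paper avoids this difficulty by not proving the matrix identity directly. It inducts on the number of nonzero parts of $\comp b$: assuming $\comp b_1>0$, every nonempty special snake contains all of row $1$, so peeling it off drops a nonzero part and the inductive hypothesis reduces the claim to
\[
\sum_{S}\sgn(S)\,x_1^{|S|}\,\key_{\sh(\D(\comp b)\setminus S)}=\key_{\comp b}.
\]
This is rewritten as a signed sum over pairs $(S,T)$ with $T\in\mathcal G(S)$, and the involution (Definition \ref{def:involution}, Theorem \ref{thm:inv}) toggles a single row-segment $B(y)$ in or out of the one snake $S$, located via a canonical $S$-attack. Crucially there is no weight constraint on $S$ to preserve here---the sum runs over all special snakes---so the toggle is unobstructed; the technical work is in Lemmas \ref{lem:inv-snake}--\ref{lem:attack-invariant}, which verify that the result is again a special snake and that the set of $S$-attacks is unchanged. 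The case $\comp b_1=0$ is handled separately by the reindexing isomorphism of Lemma \ref{lem:iso-subspaces}. If you want to salvage the direct matrix-inverse approach, you will need an entirely different surgery, one that moves cells between snakes without altering any $|S_j|$; designing and verifying such a move is likely at least as delicate as the paper's argument.
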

	The notation $\tilde K^{-1}_{\comp a\comp b}$ reflects that these coefficients, and the coefficients $\tilde K_{\comp a\comp b}$ define inverse transition matrices.
	
	It is interesting to compare Theorem \ref{thm:snakes} with Eğecioğlu and Remmel's result. In light of Proposition \ref{prop:hook} it can be stated as follows.
	\begin{theorem}[\cite{ER90}]\label{thm:ER}
		For a partition $\mu$ we have
		\begin{equation}\label{eq:ER}
			s_\mu(x_1,\ldots,x_n)=\sum_\lambda K^{-1}_{\lambda\mu} h_\lambda(x_1,\ldots,x_n)
		\end{equation}
		summed over partitions, and with
		\[
		K^{-1}_{\lambda\mu}=\sum_U \sgn(U)
		\]
		summed over special snake tabloids $U$ of weight $\lambda$ and shape $\mu$.
	\end{theorem}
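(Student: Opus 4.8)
The plan is to deduce this from the original theorem of Eğecioğlu and Remmel \cite{ER90} by verifying that the reformulation in terms of snakes is faithful; no combinatorics beyond Proposition~\ref{prop:hook} is needed. Recall that \cite{ER90} proves $K^{-1}_{\lambda\mu}=\sum_T\sgn(T)$, the sum over \emph{special rim-hook tabloids} $T$ of the Young diagram of $\mu$ with content $\lambda$: ordered disjoint decompositions of the diagram into rim hooks $R_1,\dots,R_n$, each meeting the first column, with $\sgn(T)=(-1)^{\sum_i(\height(R_i)-1)}$; that $(K_{\lambda\mu})$ and $(K^{-1}_{\lambda\mu})$ are inverse matrices, hence \eqref{eq:ER}, is the elementary consequence of $h_\mu=\sum_\lambda K_{\lambda\mu}s_\lambda$ and unitriangularity of the Kostka matrix.

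First I would set up the dictionary between rim-hook tabloids and snake tabloids. By Proposition~\ref{prop:hook}---applied to $\mu$ or to $\rev(\mu)$ according to whether one draws the Young diagram in the French or the English convention---the rim hooks of the Young diagram of $\mu$ are precisely the snakes of the corresponding monotone-shaped key diagram $\D(\comp b)$, $\comp b\in\{\mu,\rev(\mu)\}$; and, as established inside the proof of that proposition, removing such a snake again yields a diagram of the form $\D(\comp a)$ with $\comp a\preceq\comp b$ still weakly monotone. Consequently a nested sequence of rim-hook removals corresponds step by step to a nested sequence of snake removals of the same sizes, with matching intermediate partial decompositions. This gives a weight-preserving bijection between the two families once the side conditions are reconciled.

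Then I would match the decorations. A rim hook meets the first column iff it contains the lowest cell of column $1$ of the ambient diagram (if it met column $1$ higher up, the complement would omit a lower cell of column $1$ and fail to be a key diagram), which is exactly the \emph{special} condition of Section~\ref{sec:inverse}; Eğecioğlu and Remmel's convention assigning the parts of $\lambda$ to successive rim hooks amounts to the rule in Definition~\ref{def:snake-tabloid} that $S_i=\emptyset$ exactly when row $i$ of the remaining diagram is empty, and for a partition-shaped diagram the resulting weight $(|S_1|,\dots,|S_n|)$ is automatically weakly decreasing, hence equal to $\lambda$; finally $\height$ of a snake, being the number of rows it meets, is the rim-hook height, so $\sgn$ of a special snake tabloid equals $\sgn$ of the matching special rim-hook tabloid. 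Summing signs, $\sum_U\sgn(U)=\sum_T\sgn(T)=K^{-1}_{\lambda\mu}$, which is \eqref{eq:ER}. It is worth remarking that this is also consistent with Theorem~\ref{thm:snakes} in the dominant case $\comp b=\mu$, where every special snake tabloid of shape $\mu$ has partition weight and one expects \eqref{eq:ER} to fall out by the same stabilization argument (via Lemma~\ref{lem:iso-subspaces}) used after Theorem~\ref{thm:key-expand}.

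The main obstacle is entirely bookkeeping rather than substance: one must be scrupulous about the French/English conventions used in \cite{ER90} versus Section~\ref{sec:inverse} (and hence whether Proposition~\ref{prop:hook} is invoked at $\mu$ or at $\rev(\mu)$), about the identification of ``special'' with ``contains the lowest cell of column~$1$,'' about reconciling Eğecioğlu and Remmel's ordering of the rim hooks with the row-indexed structure of Definition~\ref{def:snake-tabloid}, and---if one pursues the alternative derivation from Theorem~\ref{thm:snakes}---about checking that partition-shaped special snake tabloids have partition weight and about the precise form of the limiting step. Modulo these checks the statement is exactly \cite{ER90} read through Proposition~\ref{prop:hook}.
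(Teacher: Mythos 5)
Your overall route is exactly the paper's: Theorem~\ref{thm:ER} is stated as a citation of \cite{ER90}, and the paper offers no proof beyond the remark that Proposition~\ref{prop:hook} lets one rewrite special rim hook tabloids as special snake tabloids. Your reconstruction of that dictionary (rim hooks $=$ snakes of $\D(\rev(\mu))$, iterated removals matching because each complement is again partition-shaped, ``special'' $=$ contains the lowest cell of column $1$, heights and signs agreeing) is the intended argument and is essentially sound.

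There is, however, one concrete false step: the claim that for a partition-shaped diagram the weight $(|S_1|,\dots,|S_n|)$ of a special snake tabloid ``is automatically weakly decreasing, hence equal to $\lambda$.'' This is not true, and the paper's own Figure~\ref{fig:non-cancel} is a counterexample: the two tabloids there have shape $\rev((2,2,1,1))$ and weights $(1,3,0,2)$ and $(2,0,3,1)$, neither of which is a partition; the paper explicitly says these two terms cancel in \eqref{eq:ER} ``because their sorted weights yield the same partition.'' (The same failure occurs if one instead uses the French-convention diagram $\D(\mu)$.) So ``weight $\lambda$'' in Theorem~\ref{thm:ER} must be read as ``sorted weight equal to $\lambda$,'' matching Eğecioğlu and Remmel's notion of the \emph{type} of a tabloid as the multiset of hook lengths. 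As written, your identification would retain only those tabloids whose weight composition is literally weakly decreasing and would therefore miscount $K^{-1}_{\lambda\mu}$ (e.g.\ it drops one of the two tabloids in Figure~\ref{fig:non-cancel} and destroys the cancellation the paper is highlighting). The same slip propagates into your closing remark about deducing the theorem from Theorem~\ref{thm:snakes}: the correct version of that alternative is to apply Theorem~\ref{thm:snakes} to $\comp b=\rev(\mu)$, pass to the stable limit $\h_{\comp a}\mapsto h_{\sort(\comp a)}$ via Proposition~\ref{prop:stable_h}, and then group tabloids by sorted weight. With ``weight $\lambda$'' corrected to ``type $\lambda$'' throughout, your bookkeeping goes through and the proof is complete.
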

	Applying Theorem \ref{thm:snakes} to $\key_{\rev(\mu)}=s_\mu(x_1,\ldots,x_n)$ we see that the same objects index the expansion of $s_\mu$ into both bases $\{ h_\lambda \}$ and $\{ \h_\comp a \}$. This is in spite of the fact that the former basis is not contained in the latter, but consistent with Proposition \ref{prop:stable_h}. 
	
	There is no known cancellation-free formula for the inverse Kostka coefficients $K^{-1}_{\lambda\mu}$. Theorem \ref{thm:snakes} is not cancellation-free either as seen by Figure \ref{fig:cancel}, however it \textit{is} cancellation free in the special case of Schur polynomials.
	\begin{figure}[ht]
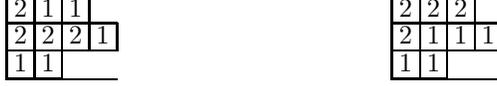

		\vline
		\begin{ytableau}
			2&1&1\\
			2&2&2&1\\
			1&1\\
			\hline
		\end{ytableau}\hspace{100pt}
		\vline
		\begin{ytableau}
			2&2&2\\
			2&1&1&1\\
			1&1\\
			\hline
		\end{ytableau}
		\caption{\label{fig:cancel}Special snake tabloids with the same weight and shape but opposite signs.}
	\end{figure}
	\begin{theorem}\label{thm:cancelfree}
		For a partition $\mu$, the formula
		\begin{equation}\label{eq:nocancel}
		\tilde K^{-1}_{\comp a\rev(\mu)}=\sum_U \sgn(U)
		\end{equation}
		summed over special snake tabloids $U$ of weight $\comp a$ and shape $\rev(\mu)$ is cancellation-free, i.e. does not contain both positive and negative terms.
	\end{theorem}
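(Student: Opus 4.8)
The plan is to prove something slightly stronger than the stated cancellation-freeness: that when the shape is $\rev(\mu)$ for a partition $\mu$, the weight map $U\mapsto\wt(U)$ is \emph{injective} on special snake tabloids. Since $\sgn(U)=(-1)^{\sum_i(\height(S_i)-1)}$, injectivity immediately forces each coefficient $\tilde K^{-1}_{\comp a\rev(\mu)}$ in \eqref{eq:nocancel} to be a single signed term (so in fact $\tilde K^{-1}_{\comp a\rev(\mu)}\in\{-1,0,1\}$), and cancellation is impossible. The reason injectivity should hold is that a partition-type shape peels only into partition-type shapes, and the special snakes available at each stage are distinguished by their sizes.

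First I would set up the peeling. Write $D^{(i)}=\D(\rev(\mu))\setminus(S_1\sqcup\cdots\sqcup S_i)$. By Definition~\ref{def:snake} removing a snake from a key diagram $\D(\comp c)$ yields a key diagram $\D(\comp c')$ with $\comp c'\preceq\comp c$, and, exactly as in the proof of Proposition~\ref{prop:hook}, $\comp c'\preceq\rev(\nu)$ with $\rev(\nu)$ weakly increasing forces $\comp c'$ weakly increasing; hence $D^{(i)}=\D(\rev(\nu^{(i)}))$ for partitions $\mu=\nu^{(0)}\succeq_Y\nu^{(1)}\succeq_Y\cdots$. A short induction shows that rows $1,\dots,i-1$ of $D^{(i-1)}$ are empty, so by Definition~\ref{def:snake-tabloid} the snake $S_i$ is forced to be empty unless $i$ is the lowest nonempty row of $D^{(i-1)}$, in which case $S_i$ is a nonempty special snake of $\D(\rev(\nu^{(i-1)}))$ — equivalently, by Proposition~\ref{prop:hook}, a rim hook of $\nu^{(i-1)}$ meeting the first column. (A rim hook meeting column $1$ automatically contains the lowest cell of column $1$: its complement is a flush-left key diagram $\D(\rev(\kappa))$ with $\rev(\kappa)$ weakly increasing, so $\rev(\kappa)_r=0$ for the row $r$ where the rim hook meets column $1$ forces $\rev(\kappa)_{r'}=0$ for all $r'\le r$.)

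The heart of the argument is the following claim, which I would prove next: for a partition $\nu$ with exactly $\ell$ parts, the nonempty special snakes of $\D(\rev(\nu))$ are precisely the $\ell$ rim hooks of $\nu$ meeting the first column, and these have pairwise distinct sizes. Concretely, the one spanning the bottom $\ell-a+1$ rows (for $1\le a\le\ell$) has size equal to the hook length $\nu_a+\ell-a$ of the first-column cell of $\nu$ in row $a$, and $\nu_a+\ell-a$ is strictly decreasing in $a$ since $(\nu_a+\ell-a)-(\nu_{a+1}+\ell-a-1)=\nu_a-\nu_{a+1}+1\ge 1$. Granting this, I would finish as follows. Let $U=(S_1,\dots,S_n)$ and $U'=(S'_1,\dots,S'_n)$ be special snake tabloids of shape $\rev(\mu)$ with $\wt(U)=\wt(U')=\comp a$, and induct on $i$: if $S_j=S'_j$ for $j<i$ then $D^{(i-1)}$ and its primed counterpart coincide, say both equal $\D(\rev(\nu))$; if $\comp a_i=0$ then $S_i=S'_i=\emptyset$, and if $\comp a_i>0$ then $S_i,S'_i$ are nonempty special snakes of $\D(\rev(\nu))$ of the common size $\comp a_i$, hence equal by the claim. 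Thus $U=U'$, so \eqref{eq:nocancel} has at most one summand and cannot contain both signs.

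The main obstacle is the displayed claim in the last paragraph: one must verify that in an increasing-shape key diagram the three conditions of Definition~\ref{def:snake} cut out exactly the rim hooks through the first column (this is Proposition~\ref{prop:hook} together with the flush-left observation, but the non-$2\times2$/bad-triple conditions need to be matched up carefully), and that the resulting special snakes are determined by their size, with sizes strictly increasing in height — a statement that is classical border-strip theory but deserves a clean self-contained proof in this language. One should also double-check the interplay with the empty-row bookkeeping of Definition~\ref{def:snake-tabloid}, i.e.\ that the nonempty snakes of a tabloid genuinely occur at the successive ``lowest nonempty row'' positions, so that reading the nonzero parts of $\wt(U)$ in order really does drive the reconstruction of $U$.
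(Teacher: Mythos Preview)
Your proposal is correct and follows essentially the same approach as the paper: both arguments show that a special snake tabloid of shape $\rev(\mu)$ is uniquely determined by its weight, by checking that the intermediate diagrams $D^{(i)}$ remain reversed-partition shaped and that a special snake of such a shape is determined by its size, then peeling inductively. The only cosmetic difference is that the paper proves the size-determines-snake claim by exhibiting the rim as a path $u_1,\ldots,u_m$ and observing that a connected subset containing $u_1$ must be an initial segment, whereas you enumerate the special snakes explicitly and identify their sizes with the first-column hook lengths $\nu_a+\ell-a$; both routes yield the same ``at most one term'' conclusion.
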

	\begin{proof}
		For a partition $\lambda=(\lambda_1,\ldots,\lambda_\ell,0,0,\ldots)$ we claim that a special snake $S$ of $\rev(\lambda)$ is completely determined by $\lambda$ and the size of $S$. Since $S$ is a rim hook by Proposition \ref{prop:hook} we work with Definition \ref{def:rim-hook}. If $|S|=0$ the claim is obvious so we may assume $(1,n+1-k)\in S$.
		
		Say $\D(\rev(\mu))=\D(\rev(\lambda))\setminus S$ with $\mu\preceq_Y \lambda$. Suppose $(r,c)\in S$ with $c<\rev(\lambda)_{r-1}$. We have $\rev(\mu)_r< c$ which implies $\rev(\mu)_{r-1}<c$. Then $(r,c),(r,c+1),(r-1,c),(r-1,c+1)\in S$ which contradicts Definition \ref{def:rim-hook}. So we must have $c\ge \rev(\lambda)_{r-1}$ for any $(r,c)\in S$ with $r>1$.
		
		Let \[V=\{(r,c)\in\D(\rev(\lambda))\mid r=1 \text{ or } c\ge \rev(\lambda)_{r-1}  \}\] and take a cell $(r,c)\in V$. If $(r,c+1)\in V$ then $c< \rev(\lambda)_{r}$ which implies $(r+1,c)\notin V$. If $(r-1,c)\in V$ then $c\le \rev(\lambda)_{r-1}$ implies $(r,c-1)\notin V$. Then taking $U$ to be the connected component of $V$ containing $(1,n+1-\ell)$ we can  write $U=\{u_1,u_2,\ldots,u_m \}$ where each pair $(u_i,u_{i+1})$ is connected with $u_{i+1}$ to the right or above $u_i$. Note that if $m>2$ then the row and column differences of $u_1$ and $u_m$ sum to $m$ and therefore $(u_1,u_m)$ is not connected.
		
		It must be the case that $u_1=(1,n+1-\ell)$ since this is the leftmost cell in the lowest row of $\D(\rev(\lambda))$. Now $S$ is a connected subset of $U$ containing $u_1$ which can be nothing but $S=\{u_1,u_2,\ldots,u_k \}$ for $k$ the cardinality of $S$. This proves the claim that $S$ is determined by its size and $\lambda$.
		
		Given a special snake tabloid $U=(S_1,\ldots, S_n)$ of shape $\rev(\mu)$ we know by induction that each $S_i$ is a rim hook of a Ferrers diagram $\D(\rev(\lambda))\setminus (S_1\sqcup\cdots\sqcup S_{i-1})$. It therefore follows from the claim that $U$ is completely determined by $\mu$ and $\wt(U)$. Not only is the sum \eqref{eq:nocancel} cancellation-free: it has at most one term.
	\end{proof}
	
	From Theorem \ref{thm:cancelfree} we then see that the entirety of cancellation that occurs in Theorem \ref{thm:ER} is explained as a result of distinct complete flagged homogeneous polynomials stabilizing to the same complete homogeneous symmetric function. For instance, Eğecioğlu and Remmel give the tabloids in Figure \ref{fig:non-cancel} as an example of objects whose terms cancel in equation \eqref{eq:ER} \cite{ER90}. This occurs because their sorted weights yield the same partition. However the weights are necessarily distinct as weak compositions so the terms do not cancel in equation \eqref{eq:snake}; just their stable limits do.

	\begin{figure}[ht]
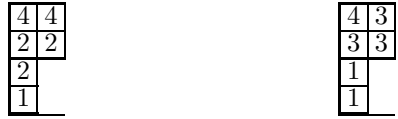

		\vline
		\begin{ytableau}
			4&4\\
			2&2\\
			2\\
			1\\
			\hline
		\end{ytableau}\hspace{100pt}
		\vline
		\begin{ytableau}
			4&3\\
			3&3\\
			1\\
			1\\
			\hline
		\end{ytableau}
		\caption{\label{fig:non-cancel}Special snake tabloids whose terms cancel in equation \eqref{eq:ER} but not in equation \eqref{eq:snake}. }
	\end{figure}
	
	Further applications for snakes are outside the scope of this paper, however it is worth considering that these objects could extend definitions and results that traditionally rely on rim hooks. For instance, the Murnaghan-Nakayama rule can be used to write the power sum symmetric polynomials in terms of rim hooks and the Schur basis \cite{Mur37,Nak40}. Replacing rim hooks with snakes and Schur polynomials with key polynomials defines what may be an interesting power sum analogue. Additionally, the polynomials of Lascoux, Leclerc, and Thibon are defined in terms of rim hooks and may also be candidates for generalization \cite{LLT97}.
	
	The remainder of the section builds up to the proof of Theorem \ref{thm:snakes}. Given a snake $S$ of $\comp b$, our next lemmas require the set
	\begin{equation}
		\mathcal G(S)=\{ T:\D(\comp b)\to[n] \mid T|_{\D(\comp b)\setminus S}\in\SSKT \text{ and } T|_S=1  \}.
	\end{equation}
	Notice that the generating function of this set can be written
	\[
	\sum_{T\in\mathcal G(S)}x^{\wt(T)}=x_1^{|S|}\key_{\sh(\D(\comp b)\setminus S)}
	\]
	where $\sh(\D(\comp b)\setminus S)$ is the weak composition indexing the key diagram $\D(\comp b)\setminus S$
	
	\begin{figure}[ht]
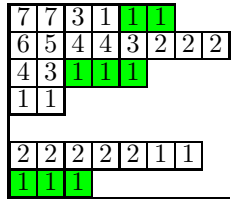

		\vline
		\begin{ytableau}
			7&7&3&1&*(green)1&*(green)1\\
			6&5&4&4&3&2&2&2\\
			4&3&*(green)1&*(green)1&*(green)1\\
			1&1\\
			\none\\
			2&2&2&2&2&1&1\\
			*(green)1&*(green)1&*(green)1\\
			\hline
		\end{ytableau}
		\caption{\label{fig:almost-sskt}A filling $T\in\mathcal G(S)$ with $S$ highlighted.}
	\end{figure}
	
	\begin{lemma}\label{lem:1-snake}
		Let $S$ be a snake of $\D(\comp b)$. If $T\in \mathcal G(S)$ then $\maj(\hat T)=\coinv(\hat T)=0$.
	\end{lemma}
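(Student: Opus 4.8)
The plan is to reduce the claim to properties of the semi‑standard key tableau $U := T|_{\D(\comp a)}$, where $\comp a$ is the weak composition determined by $\D(\comp a)=\D(\comp b)\setminus S$; this $\comp a$ exists and satisfies $\comp a\preceq\comp b$ in the key poset by condition (2) of Definition \ref{def:snake}, and $U\in\SSKT$ by hypothesis. At the outset I would record the shape of $S$: since $\D(\comp a)$ and $\D(\comp b)$ are both key diagrams with $\comp a_i\le\comp b_i$ for all $i$, in each row $i$ the cells of $S$ are precisely those in columns $\comp a_i+1,\dots,\comp b_i$, i.e.\ $S$ sits immediately to the right of $\D(\comp a)$ row by row.

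The identity $\maj(\hat T)=0$ I would get by inspection: a row of $\hat T$ reads, left to right, as the basement entry, then the corresponding row of $U$ (weakly decreasing, as $\maj(\hat U)=0$), then a run of copies of $1$ contributed by $S$. The basement entry dominates the first entry of the $U$-part because $\maj(\hat U)=0$ (or dominates $1$ trivially if that row of $U$ is empty), and the last entry of the $U$-part is a positive integer, hence at least the value $1$ that follows it; so the whole row of $\hat T$ is weakly decreasing.

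For $\coinv(\hat T)=0$ I would argue by contradiction, assuming $\hat T$ has a co‑inversion triple. Since $\maj(\hat T)=0$, Remark \ref{rem:inv} forces its entries to be $i<j<k$ in the arrangement of Figure \ref{fig:inv}, with $k$ and $i$ in a common row ($k$ to the left) and $j$ the third cell. As $j\ge2$ and $k\ge3$, the cells carrying $j$ and $k$ are not in $S$, so they lie in $\D(\comp a)$; thus for a Type I triple (cells $(c,r),(c+1,r),(c,s)$ with $r<s$ and $\comp b_r>\comp b_s$) we get $\comp a_r\ge c$ and $\comp a_s\ge c$, and for a Type II triple (cells $(c,s),(c+1,s),(c+1,r)$ with $r<s$ and $\comp b_r\le\comp b_s$) we get $\comp a_s\ge c$ and $\comp a_r\ge c+1$. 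Now I would split on whether $\comp a_r\le\comp a_s$ or $\comp a_r>\comp a_s$. Three of the four cases are short: for Type I with $\comp a_r\le\comp a_s$, Lemma \ref{lem:ledge} applied to $U$ gives $U(c,r)<U(c,s)$, i.e.\ $k<j$, contradicting $j<k$; for Type I with $\comp a_r>\comp a_s$ we have $\comp a_r\ge c+1$, so all three cells lie in $\D(\comp a)$ and form a genuine Type I triple of $U$ whose entries $i<j<k$ make it a co‑inversion triple of the SSKT $U$, which is impossible; for Type II with $\comp a_r\le\comp a_s$ we have $\comp a_s\ge c+1$, so again all three cells lie in $\D(\comp a)$ and form a Type II co‑inversion triple of $U$, impossible.

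The case I expect to be the main obstacle is Type II with $\comp a_r>\comp a_s$: here the cell $(c+1,s)$ carrying $i$ may belong to $S$, so the triple need not transfer to $U$, and it is tempting to start dissecting the shape of $S$. The clean way out, which I would use, is to observe that a Type II configuration of $\hat T$ requires $\comp b_r\le\comp b_s$, whereas $\comp a_r>\comp a_s$ together with $\comp a\preceq\comp b$ forces $\comp b_r>\comp b_s$ by condition (ii) of Definition \ref{def:key-poset} --- a direct contradiction needing no further information about $S$ (indeed conditions (1) and (3) of Definition \ref{def:snake} are never used, only $\comp a\preceq\comp b$). Assembling the four cases yields $\coinv(\hat T)=0$ and completes the proof.
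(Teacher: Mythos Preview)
Your proof is correct and follows essentially the same route as the paper's: both reduce to the SSKT $U=T|_{\D(\comp a)}$, use $\maj(\hat U)=0$ for the major-index claim, and for $\coinv$ split on the relative lengths of the two rows in $\D(\comp b)$ and in $\D(\comp a)$, invoking Lemma~\ref{lem:ledge} and the key-poset relation $\comp a\preceq\comp b$ at exactly the same junctures. Your organization is slightly cleaner---front-loading the observation that the cells carrying $j$ and $k$ lie in $\D(\comp a)$ lets two of the four cases collapse to ``the triple sits entirely in $U$,'' and your explicit appeal to Definition~\ref{def:key-poset}(ii) dispatches the Type~II/$\comp a_r>\comp a_s$ case in one line---but the underlying argument is the same.
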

	\begin{proof}
		Since $T|_S$ is identically 1 and $T|_{\D(\comp b)\setminus S}$ is an SSKT we must have $\maj(\hat T)=0$.
		
		Say $\D(\comp a)=S\setminus \D(\comp b)$ so that $\comp a\preceq \comp b$. Take rows $r<s$ such that $\comp b_r\le \comp b_s$. It follows that $\comp a_r\le \comp a_s$ as well. Then any co-inversion triple of $\hat T$ within these rows must have an entry in $S$, else it would also be a co-inversion triple in $\hat T|_{\D(\comp a)}$. However, $\comp a_r\le \comp a_s$ implies that any cell in row $s$ of $S$ does not share a column with a cell in row $r$ of $\D(\comp a)$. So any (necessarily Type II) triple of $T$ whose top right cell is in $S$ must also have its bottom cell in $S$. The triple then contains two entries 1 and is not a co-inversion triple.
		
		Now take rows $r<s$ with $\comp b_r>\comp b_s$. If $\comp a_r\le \comp a_s$ then by Lemma \ref{lem:ledge} we have $T(c,r)<T(c,s)$ whenever $c\le \comp a_r$. Since $T(c,r)=1$ for all $\comp a_r<c\le \comp b_r$, it follows that $T(c,r)\le T(c,s)$ for all $c\le \comp b_s$. In this case there are no co-inversion triples of $T$ within these rows.
		
		If instead $\comp a_r>\comp a_s$, then any (Type I) co-inversion triple of $T$ within these rows must have entries $1<i<j$ with the entry 1 in $S$. The premise $\comp a_r>\comp a_s$ then further implies that the cell of $i$ is in $S$, so $i=1$ which is a contradiction.
	\end{proof}
	
	\begin{definition}\label{def:snake-attack}
		Given a snake $S$ of $\D(\comp b)$ and $T\in \mathcal G(S)$, we say an ordered pair of cells $x,y\in\D(\comp b)$ is an \newword{$S$-attack} if
		\begin{enumerate}[(a)]
			\item $x$ and $y$ are attacking cells with $y$ either above $x$ or in the column to its right,
			\item $x\in S$,
			\item both cells have entry 1, and
			\item if $x$ and $y$ lie in distinct columns then $S$ does not contain the cell immediately left of $y$.
		\end{enumerate}
	\end{definition}
	Condition (d) is a bit of a technical point, but an $S$-attack is approximately just a violation of the non-attacking condition for $T$ with some symmetry-breaking stipulations.
	
	\begin{lemma}\label{lem:snake-attack}
		Let $S$ be a special snake of $\D(\comp b)$. If $T\in \mathcal G(S)$ is not an SSKT then $T$ contains a $S$-attack.
	\end{lemma}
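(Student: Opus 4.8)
I would prove the statement directly: assume $T\in\mathcal G(S)$ is not an SSKT and exhibit an $S$-attack. By Lemma \ref{lem:1-snake} we already know $\maj(\hat T)=\coinv(\hat T)=0$, so the only way $T$ can fail to be an SSKT is that $\hat T$ is attacking; pick attacking cells $u,v$ with $\hat T(u)=\hat T(v)$. Writing $\D(\comp a)=\D(\comp b)\setminus S$ (a key diagram with $\comp a\preceq\comp b$ by the snake axioms), the restriction $T|_{\D(\comp a)}$ is an SSKT, so its augmented filling is non-attacking; since distinct basement cells carry distinct entries, and since a basement cell $(0,i)$ can only attack a cell $(1,\rho)$ with $\rho<i$, whose $T$-value is then at most $\rho<i$, the pair $\{u,v\}$ cannot lie entirely in $\D(\comp a)\cup\{(0,1),\dots,(0,n)\}$. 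Hence one of the two, say $u$, lies in $S$, which forces $\hat T(u)=\hat T(v)=1$ and $v\in\D(\comp b)$. I will package this as the notion of a \emph{bad pair}: an attacking pair $(x,y)$ with $x\in S$, $T(x)=T(y)=1$, and $y$ directly above $x$ in the same column or in the column immediately right of $x$; the discussion above shows at least one bad pair exists whenever $T$ is not an SSKT, modulo checking that one of the two cells may legitimately be taken as $x$, which I address below.

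The heart of the proof is then a minimality argument. Among all bad pairs choose $(x,y)$ with the column index of $y$ as small as possible; I claim it is an $S$-attack. Parts (a)--(c) of Definition \ref{def:snake-attack} are immediate from the definition of a bad pair, and for (d) suppose $x=(c,r)$ and $y=(c+1,s)$ lie in distinct columns, so that $r>s$ since they attack. If the cell $z=(c,s)$ immediately left of $y$ (which is in $\D(\comp b)$ because $y$ is) belonged to $S$, then $z$ and $x$ would be two cells of $S$ in column $c$, both carrying entry $1$ and hence attacking, so $(z,x)$ would be a bad pair whose attacked cell $x$ lies in column $c<c+1$, contradicting minimality. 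Thus $z\notin S$, condition (d) holds, and $(x,y)$ is an $S$-attack. The same ``two cells of $S$ in one column give a bad pair'' observation, applied to reorienting the original pair $\{u,v\}$, also lets me conclude that a correctly oriented bad pair exists whenever $\hat T$ is attacking and no column of $\D(\comp b)$ contains two cells of $S$.

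The remaining work -- which I expect to be the main obstacle -- is to rule out the degenerate ``wrong orientation'' situation in which no bad pair exists. If no bad pair exists then, as just noted, no column of $\D(\comp b)$ contains two cells of $S$; combining this with the hypothesis that $S$ is \emph{special} -- hence nonempty, and, being weakly connected, occupying a contiguous block of columns beginning with column $1$ and containing its lowest cell $(1,m)$ of column $1$ -- forces every cell of $S$ into a single row, and then the snake condition $\D(\comp b)\setminus S=\D(\comp a)$ together with the left-justification of key diagrams pins $S$ down as the entire lowest nonempty row $m$ of $\D(\comp b)$, with $\comp a_m=0$. In this last case I would verify directly that $T$ must after all be an SSKT: any cell attacking a cell of $S$ lies strictly above it in the same column or in the column immediately to its left (row $m$ being the lowest nonempty row), and a short case analysis -- using once more that $T|_{\D(\comp a)}$ is non-attacking, that $\comp a_m=0$, and that the left neighbour of such an attacking cell is again a cell of $S$ in row $m$ -- shows that any attacking value-$1$ partner of an $S$-cell would itself produce a bad pair. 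This contradiction shows $\hat T$ is non-attacking, so $T\in\SSKT$ against our assumption; hence a bad pair, and therefore by the minimality argument an $S$-attack, must exist. The delicate part throughout is the interplay between weak connectedness, specialness, and the left-justification forced by the snake condition, together with the handful of sub-cases of the attacking relation near the bottom row.
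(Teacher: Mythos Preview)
Your argument is correct, and at bottom it splits into the same two cases the paper uses, but you arrive there by a longer road. The paper's proof is much shorter because it never needs your minimality step or the ``bad pair'' abstraction: it observes that one can always exhibit an $S$-attack with $x$ and $y$ in the \emph{same column}, so that condition~(d) of Definition~\ref{def:snake-attack} is vacuous. Concretely, if $S$ occupies more than one row, then (exactly as you show) weak connectedness together with specialness forces some column to contain two cells of $S$, and that pair is already a same-column $S$-attack; if $S$ lies entirely in the lowest nonempty row, then any value-$1$ cell attacking a cell of $S$ either sits above it in the same column or sits above the left neighbour of that $S$-cell, which is again in $S$ --- either way a same-column $S$-attack. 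You prove the same two facts, but then feed them into a ``no bad pair $\Rightarrow$ $T\in\SSKT$'' contradiction and a separate column-minimality argument to secure~(d); both detours are sound but unnecessary once you notice that a same-column witness is always available. Your treatment does have the minor advantage of being phrased for a general lowest nonempty row $m$, whereas the paper writes ``first row'' with the ambient assumption $\comp b_1>0$ in mind.
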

	\begin{proof}
		By Lemma \ref{lem:1-snake}, $\maj(\hat T)=\coinv(\hat T)=0$ but $T$ is not an SSKT so it must contain a pair of attacking cells with entry 1 and at least one of those cells in $S$.
		
		If $S$ is composed only of the first row of $\D(\comp b)$. Any cell $y$ with entry 1 attacking a cell in $S$ must in fact lie in the same column as some cell $x$ in $S$ so this pair is an $S$-attack.
		
		If $S$ contains cells in multiple rows, let $y$ be the leftmost, then lowest cell in $S$ that is not in row 1. In order for $S$ to be weakly connected, $y$ must share a column with a cell $x$ in row 1 and again this pair is an $S$-attack.
	\end{proof}
	
	The central step in our proof of the $\key$ to $\h$ expansion is showing the existence of the following involution on the set
	\begin{equation}
	\mathcal F=\{ (S,T) \mid S \text{ a special snake of } \D(\comp b),\  T\in \mathcal G(S) \text{ and } T\notin\SSKT \}.
	\end{equation}
	when $\comp b_1>0$. 
	
	\begin{definition}\label{def:involution}
		With $\comp b_1>0$ and $(S,T)\in\mathcal F$ define $S'$ as follows. Take $x$ to be the rightmost, then topmost cell that occurs as the first cell in an $S$-attack. Let $y$ be the rightmost, then lowest cell such that $x,y$ is an $S$-attack. Let $B(y)$ be the set containing $y$ and all cells in the same row to its right in $\D(\comp b)$. Then $S'$ is defined to be the symmetric difference of $S$ and $B(y)$. We also write $\iota(S,T)=(S',T)$.
	\end{definition}
	\begin{remark}
		A choice of $x$ as in Definition \ref{def:involution} exists by Lemma \ref{lem:snake-attack}, so the procedure is well-defined.
	\end{remark}

	We claim $\iota$ is a sign-reversing involution on $\mathcal F$. The content of the proof is in several intermediate results. The next lemma should help clarify the nature of the procedure.
	
	\begin{figure}[ht]
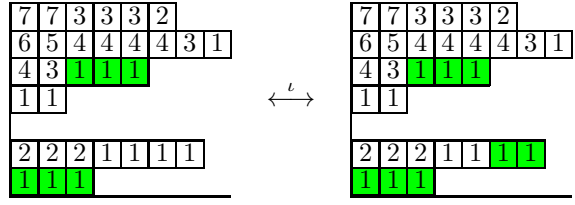

		\vline
		\begin{ytableau}
			7&7&3&3&3&2\\
			6&5&4&4&4&4&3&1\\
			4&3&*(green)1&*(green)1&*(green)1\\
			1&1\\
			\none\\
			2&2&2&1&1&1&1\\
			*(green)1&*(green)1&*(green)1\\
			\hline
		\end{ytableau} $\quad\overset{\iota}{\longleftrightarrow}\quad$ \vline
		\begin{ytableau}
			7&7&3&3&3&2\\
			6&5&4&4&4&4&3&1\\
			4&3&*(green)1&*(green)1&*(green)1\\
			1&1\\
			\none\\
			2&2&2&1&1&*(green)1&*(green)1\\
			*(green)1&*(green)1&*(green)1\\
			\hline
		\end{ytableau}
		\caption{\label{fig:involution} The map $\iota$.}
	\end{figure}
	
	\begin{lemma}\label{lem:sym-diff}
		In the setting of Definition \ref{def:involution}, either $B(y)\subset S$ or $B(y)\cap S=\emptyset$. Equivalently, either $S'=S\setminus B(y)$ or $S'=S\cup B(y)$.
	\end{lemma}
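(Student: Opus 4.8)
The plan is to reduce the statement to a claim about which cells of the row of $y$ lie in $S$, and then to prove that claim by contradiction, driven by the maximality of $x$. (The two formulations in the lemma are equivalent because $S'$ is by Definition \ref{def:involution} the symmetric difference $S\triangle B(y)$, and $S\triangle B$ equals $S\setminus B$ exactly when $B\subseteq S$ and equals $S\cup B$ exactly when $S\cap B=\emptyset$.) For the reduction: since $\D(\comp b)\setminus S=\D(\comp a)$ with $\comp a\preceq\comp b$ (Definition \ref{def:snake}(2)), we have $\comp a_j\le\comp b_j$ for all $j$, so $S$ meets each row $j$ of $\D(\comp b)$ precisely in the cells of columns $\comp a_j+1,\dots,\comp b_j$, that is, in a terminal segment of that row. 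Writing $s=\mathrm{row}(y)$, the set $B(y)$ is likewise a terminal segment of row $s$, so $B(y)\setminus S$ is an initial segment of $B(y)$ and $B(y)\cap S$ is the complementary terminal segment; hence $B(y)\subseteq S$ iff $y\in S$, while, assuming $y\notin S$, one has $B(y)\cap S=\emptyset$ iff $S$ has no cell in row $s$ at all. So it suffices to prove: \emph{if $y\notin S$ then $S$ has no cell in row $s$.}

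I would argue this by contradiction: suppose $y\notin S$ but $S$ does have a cell in row $s$, and let $z=(\comp a_s+1,s)$ be the leftmost such cell. Since $y\notin S$ forces $\mathrm{col}(y)\le\comp a_s$, and Definition \ref{def:snake-attack}(a) gives $\mathrm{col}(y)\in\{\mathrm{col}(x),\mathrm{col}(x)+1\}$, we obtain $\mathrm{col}(z)\ge\mathrm{col}(x)+1$. Two observations drive the rest: (1) $S$ has no cell in column $\mathrm{col}(z)$ other than $z$, since $z$ together with a second cell of $S$ in the same column (both of entry $1$) would form an $S$-attack whose first cell lies in column $\mathrm{col}(z)>\mathrm{col}(x)$, contradicting that $x$ is the rightmost first cell of an $S$-attack; and (2) $S\supsetneq R_s$ where $R_s:=S\cap(\text{row }s)$, because $S$ is a nonempty special snake and $\comp b_1>0$ force $(1,1)\in S$, whereas $s\neq 1$ (if $s=1$, then $S$, meeting row $1$ in a terminal segment containing $(1,1)$, would contain all of row $1$ and in particular $y$).

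By weak-connectedness of $S$, some cell of $R_s$ is then weakly connected to a cell of $S\setminus R_s$; using $(\comp a_s,s)\notin S$ together with observation (1), such a connecting cell must be one of (a) a cell of $S$ directly above or below a cell of $R_s$ in the same column, (b) a cell $(\comp a_s,s')\in S$ with $s'>s$, or (c) a cell $(\comp b_s+1,s'')\in S$ with $s''<s$. In every case I would produce an $S$-attack whose first cell is strictly to the right of $x$ (contradicting that $x$ is rightmost), or in the column of $x$ but strictly higher than $x$ (contradicting the ``topmost'' clause in the choice of $x$). For instance, in case (b) the pair $\big((\comp a_s,s'),z\big)$ is an $S$-attack — condition (d) of Definition \ref{def:snake-attack} holds because the cell $(\comp a_s,s)$ immediately left of $z$ is not in $S$ — with first cell $(\comp a_s,s')$ in column $\comp a_s\ge\mathrm{col}(x)$; and if $\comp a_s=\mathrm{col}(x)$ then $\mathrm{col}(y)=\mathrm{col}(x)$, so $x,y$ is a same-column attack and $s'>s>\mathrm{row}(x)$, violating the ``topmost'' clause. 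Cases (a) and (c) are handled in the same spirit, (c) splitting into two subcases according to whether the cell immediately left of $(\comp b_s+1,s'')$ lies in $S$.

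The routine content is the reduction in the first paragraph and the elementary fact that $B(y)$ and $S\cap(\text{row }s)$ are terminal segments of their row. The main obstacle is the case analysis of the third paragraph: one must enumerate exhaustively the ways $R_s$ can be attached to the rest of $S$ and, for each attachment, pick the right pair of cells and verify conditions (a)--(d) of Definition \ref{def:snake-attack}; the delicate points there are condition (d) and the ``topmost'' tiebreak, which is needed exactly when the offending first cell lands in the column of $x$.
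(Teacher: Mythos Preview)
Your proof is correct and uses the same mechanism as the paper: weak-connectedness of $S$ together with the extremal choice of $x$ (including the ``topmost'' tiebreak) to manufacture an $S$-attack that contradicts maximality. The paper's version is more compressed: taking $z$ to be the leftmost cell of $S\cap B(y)$, it notes that $S$, being special, nonempty, and weakly connected, must contain a cell in column $\mathrm{col}(z)-1$, and that the highest such cell $w$ lies weakly above some $v\in S$ in column $\mathrm{col}(z)$; then either $(w,z)$ or $(v,z)$ is the offending $S$-attack, forcing $z=y$. This single bridge at the left boundary of $R_s$ already covers your case (b) and your observation (1), and renders your case (c) and the remaining columns of case (a) unnecessary---though your treatment of them is fine. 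One small redundancy: observation (1) is not actually needed for the exhaustiveness of your three cases (only $(\comp a_s,s)\notin S$ is), and it is itself just the column-$\mathrm{col}(z)$ instance of case (a).
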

	\begin{proof}
		Suppose $z$ is the leftmost cell in $S\cap B(y)$. If $z$ is not in column 1 then in order for $S$ to be weakly connected its highest cell $w$ in the column left of $z$ must be weakly above some cell $v\in S$ in the same column as $z$. If $w$ does not attack $z$, then $v$ is distinct from $z$ and attacks $z$. Either way, we see that $z$ must be $y$ so $B(y)\subset S$.
	\end{proof}

	\begin{lemma}\label{lem:inv-snake}
		In the setting of Definition \ref{def:involution}, $S'$ is a special snake of $\comp b$.
	\end{lemma}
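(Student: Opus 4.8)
The plan is to invoke Lemma~\ref{lem:sym-diff} to reduce to the two cases $S'=S\cup B(y)$ (the \emph{adding case}, which occurs precisely when $y\notin S$) and $S'=S\setminus B(y)$ (the \emph{removing case}, occurring when $y\in S$), and in each to verify the three conditions of Definition~\ref{def:snake} together with specialness. Write $\comp a$ for the weak composition with $\D(\comp b)\setminus S=\D(\comp a)$, so $\comp a\preceq\comp b$, and let $i$ and $c_y$ be the row and column of $y=(c_y,i)$.

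First dispose of the easy pieces. Since $(S,T)\in\mathcal{F}$ forces $T\notin\SSKT$, the snake $S$ is nonempty, so it contains the lowest cell of column $1$ of $\D(\comp b)$, which is $(1,1)$ because $\comp b_1>0$. That cell cannot lie in $B(y)$: if it did we would have $i=1$ and $c_y=1$, i.e.\ $y=(1,1)$, but then no cell could attack $y$ from below or from the left, contradicting that $y$ is the second cell of an $S$-attack. Hence $(1,1)\in S'$ in both cases, so $S'$ is nonempty and special. In the removing case condition~(3) is also immediate, since $S'\subseteq S$ and $S$ contains no forbidden triple; and in the adding case condition~(1) is immediate, since $x\in S$ attacks (hence is weakly connected to) the cell $y\in B(y)$ and $B(y)$ is a single horizontal strip.

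For condition~(2) in the removing case, the crucial structural fact is that $y$ is the leftmost cell of $S$ in its row, i.e.\ $(c_y-1,i)\notin S$ and hence $\comp a_i=c_y-1$: otherwise, if the chosen $S$-attack $x,y$ is of the adjacent-column type this contradicts Definition~\ref{def:snake-attack}(d), while if it is of the same-column type then $x=(c_y,i_x)$ with $i_x<i$ and $(c_y-1,i),(c_y,i),(c_y,i_x)$ is a forbidden triple of $S$. Granting this, $B(y)$ is exactly row $i$ of $S$, so $\D(\comp b)\setminus S'=\D(\comp a)\cup B(y)=\D(\comp a')$ with $\comp a'_i=\comp b_i$ and $\comp a'_k=\comp a_k$ otherwise; in the adding case, dually, $B(y)\subseteq\D(\comp a)$ forces $\comp a_i=\comp b_i$, and $\D(\comp b)\setminus S'=\D(\comp a')$ with $\comp a'_i=c_y-1$ and $\comp a'_k=\comp a_k$ otherwise. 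In each case $\D(\comp a')$ is genuinely a key diagram. What remains for condition~(2) is the key-poset inequality $\comp a'\preceq\comp b$: since $\comp a'$ differs from $\comp a$ only in coordinate $i$, only pairs involving row $i$ can change inversion status, the inversions of the form $(i,k)$ with $i<k$ that genuinely change being controlled either by the already-verified $\comp a\preceq\comp b$, or by producing a forbidden triple of $S$, or --- in the boundary situation where row $i$ of $S$ is a single cell --- by making $y$ itself the first cell of an $S$-attack in column $c_y$, which contradicts the rightmost-then-topmost choice of $x$ in Definition~\ref{def:involution}; the pairs $(j,i)$ with $j<i$ are controlled directly by $\comp a\preceq\comp b$ in the removing case and, in the adding case, by Lemma~\ref{lem:ledge} applied to the rows $j<i$ of the SSKT $T|_{\D(\comp a)}$ (a problematic $j$ would force an entry strictly below $1$ in column $c_y$, since $y$ carries entry $1$).

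The main obstacle is condition~(3) in the adding case: since $S'=S\cup B(y)$ only adds cells in row $i$, a new forbidden triple $(c,s),(c+1,s),(c+1,r)$ with $r<s$ must use a cell of $B(y)$, and one must rule out the three positions this cell can occupy. In each configuration the entry-$1$ constraint on the cells of $B(y)$ and on $y$ produces a second $S$-attack whose first cell lies strictly to the right of $x$, or in the same column $c_y$ but strictly above $x$, contradicting the extremal choice of $x$; the one remaining configuration, with the triple hugging the left edge at column $c_y-1$, instead yields a forbidden triple of the snake $S$ itself (using $x$ when the attack is of same-column type) or is excluded by Lemma~\ref{lem:ledge}. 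The analogous boundary bookkeeping reappears in condition~(2) above and in verifying weak connectedness of $S'$ in the removing case, where one uses that $S$ is weakly connected and that $B(y)$ equals an entire row of $S$ to see that no connection of $S$ genuinely passed through $B(y)$. Everything else is routine checking against the definitions.
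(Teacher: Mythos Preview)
Your plan matches the paper's approach: invoke Lemma~\ref{lem:sym-diff} and verify (1)--(3) and specialness separately in the adding and removing cases. Several of your structural observations are correct and agree with the paper's, notably that in the removing case $y$ is the leftmost cell of its row in $S$, and that in the adding case row $i$ of $S$ is empty (so $\comp a_i=\comp b_i$).

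However, two of your proposed mechanisms do not work. For condition~(2) in the removing case with pairs $(i,k)$, $i<k$, you say the non-boundary situation is handled by ``producing a forbidden triple of $S$''; this fails, e.g.\ when $\comp a_i\le\comp a_k=\comp b_i-1<\comp b_i\le\comp b_k$ with $c_y<\comp b_i$, since the only candidate triple $(\comp b_i-1,k),(\comp b_i,k),(\comp b_i,i)$ has $(\comp b_i-1,k)=(\comp a_k,k)\notin S$. The paper instead notes that $(\comp b_i,i)\in B(y)\subset S$ and $(\comp b_i,k)\in S$ form an $S$-attack with first cell in column $\comp b_i>c_y\ge c_x$, contradicting the extremal choice of $x$. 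For weak connectedness of $S'=S\setminus B(y)$, it is not true that ``no connection of $S$ genuinely passed through $B(y)$''; paths in $S$ can use $B(y)$. The paper's argument is that if removing $B(y)$ disconnects $S$, then some $w\in S\setminus B(y)$ outside the component of $x$ is weakly connected to some $z\in B(y)$, and this pair (or $w$ together with its left neighbour) yields an $S$-attack whose first cell violates the rightmost--topmost choice of $x$. The extremal property of $x$ is the engine in both places, and your sketch does not invoke it. The same remark applies to your ``left-edge'' sub-case of condition~(3) in the adding case: it is not handled by Lemma~\ref{lem:ledge} but again by producing an $S$-attack (via $(c_y,s)$ and $(c_y+1,i)$ when the latter exists) or by invoking $\comp a\preceq\comp b$ when $c_y=\comp b_i$.
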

	\begin{proof}
		By Lemma \ref{lem:sym-diff}, either $S'=S\cup B(y)$ or $S'=S\setminus B(y)$. We first show that either set is weakly connected.
		
		Both $S$ and $B(y)$ are individually weakly connected. Moreover $y$ is weakly connected to $x$ in $S$, so $S\cup B(y)$ is weakly connected. In the case $B(y)\subset S$, suppose $S\setminus B(y)$ fails to be weakly connected. There must be a cell $z\in B(y)$ attacking a cell $w\in S\setminus B(y)$ such that $x$ and $w$ do not share a weakly connected component of $S\setminus B(y)$. If the cell $w'$ immediately left of $w$ is in $S$, then $w'$ must lie in a column strictly left of $x$. The pair of $w'$ and $z$ then constitute an $S$-attack contradicting the choice of $x$. If there is no such $w'$ in $S$ then $w$ and $z$ in some order constitute an $S$-attack which still contradicts the choice of $x$.
		
		Next we show $\D(\comp b)\setminus S'=\D(\comp a')$ for some $\comp a'\preceq\comp b$. Let $\D(\comp a)=\D(\comp b)\setminus S$ so that $\comp a\preceq \comp b$. When $S\cap B(y)=\emptyset$ it is clear that $\D(\comp b)\setminus (S\cup B(y))=\D(\comp a)\setminus B(y)$ is indeed a key diagram $\D(\comp a')$. Say we have $r<s$ with $\comp a_r\ge \comp a_s$. Since $T|_{\D(\comp a)}$ is a SSKT we can apply Lemma \ref{lem:ledge} to see that no entry 1 in row $s$ of $T|_{\D(\comp a)}$ can share a column with a cell in row $r$ of $\D(\comp a)$. Every entry of $B(y)$ must be 1, since $y$ has entry 1 $\maj(\hat T)=0$. Thus, $B(y)$ does not lie in row $s$. It follows that $\comp a'_r\ge \comp a'_s$, so $\comp a'\preceq\comp a\preceq \comp b$.
		
		If $B(y)\subset S$ then the cell immediately left of $y$ cannot be contained in $S$ without violating either (3) in Definition \ref{def:snake} or (d) in Definition \ref{def:snake-attack}. Thus $\D(\comp b)\setminus(S\setminus B(y))$ is also a key diagram $\D(\comp a')$. Take row indices $r<s$ with $\comp b_r\ge \comp b_s$. Say there is a cell $(c,s)\in S\setminus B(y)$. If $(r,c)\in \D(a)$ then we must have $(r,c)\in S$ since $\comp a\preceq \comp b$. Thus, we will have $\comp a'_r\ge \comp a'_s$ unless $(r,c)\in B(y)$. In the latter case, $(r,c)$ and $(s,c)$ constitute an $S$-attack with $(r,c)$ contradicting the choice of $x$. Therefore $\comp a'\preceq \comp b$.
		
		We finally check (3) in Definition \ref{def:snake}. Since $S$ is a snake, $S\setminus B(y)$ maintains compliance. Suppose $S\cap B(y)=\emptyset$ and we have $z\in B(y)$ with some $w\in S$ sharing a column. If $w$ is below $z$ then $w,z$ is an $S$-attack forcing $z=y$. A cell immediately left of $z$ can then not be in $B(y)$ or $S$ which means it will not violate (3) together with $w$ and $z$. If $w$ is above $z$ and there is a cell $z'$ immediately right of $z$ then $w,z'$ would be an $S$-attack in contradiction of our choice of $x$ and $y$. If such $z'$ does not exists, then the fact that $w\in S$ and $z\notin S$ contradicts $\comp a\preceq \comp b$. Therefore $S\cup B(y)$ must satisfy (3).
		
		We have shown that $S'$ is a snake. It is in fact a special snake since $y$ being the second cell in an $S$-attack precludes it from being $(1,1)$. 
	\end{proof}

	\begin{lemma}\label{lem:inv-image}
		In the setting of Definition \ref{def:involution}, we have $(S',T)\in\mathcal F$.
	\end{lemma}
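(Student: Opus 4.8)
By Lemma~\ref{lem:inv-snake} we already know $S'$ is a special snake of $\comp b$; write $\D(\comp b)\setminus S'=\D(\comp a')$ with $\comp a'\preceq\comp b$. Since $(S',T)\in\mathcal F$ means exactly that $S'$ is a special snake of $\D(\comp b)$, that $T\in\mathcal G(S')$, and that $T\notin\SSKT$, and the last requirement is immediate (the filling $T$ is unchanged by $\iota$ and $(S,T)\in\mathcal F$), it remains to verify $T\in\mathcal G(S')$, i.e. that $T|_{S'}=1$ and $T|_{\D(\comp a')}\in\SSKT$. For the first, $S'=S\triangle B(y)$; every cell of $S$ has $T$-value $1$ by hypothesis, and every cell of $B(y)$ does too, since $y$ has value $1$ by condition (c) of Definition~\ref{def:snake-attack} and $\maj(\hat T)=0$ (Lemma~\ref{lem:1-snake}) forces the entries weakly right of $y$ in its row to equal $1$ as well.

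For $T|_{\D(\comp a')}\in\SSKT$, observe first that $\widehat{T|_{\D(\comp a')}}$ is the restriction of $\hat T$ to a subdiagram obtained by keeping a left prefix of each row of $\D(\comp b)$ together with the same basement. Hence $\maj(\widehat{T|_{\D(\comp a')}})\le\maj(\hat T)=0$, so $\maj$ vanishes for free. Likewise any Type~I co-inversion triple of $\widehat{T|_{\D(\comp a')}}$ has $\comp a'_r>\comp a'_s$ for its two rows $r<s$, whence $\comp b_r>\comp b_s$ because $\comp a'\preceq\comp b$; the same three cells then form a Type~I co-inversion triple of $\hat T$, contradicting $\coinv(\hat T)=0$ (Lemma~\ref{lem:1-snake}). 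So only two things remain: ruling out attacking pairs of equal entries and Type~II co-inversion triples in $\widehat{T|_{\D(\comp a')}}$. Since $\D(\comp a')$ differs from $\D(\comp a)=\D(\comp b)\setminus S$ only in the row $s$ containing $y$, every such violation must involve row $s$, and by Lemma~\ref{lem:sym-diff} I split into the cases $y\notin S$ and $y\in S$.

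If $y\notin S$, then $\D(\comp a')=\D(\comp a)\setminus B(y)$ is obtained by deleting the trailing all-ones cells $B(y)$ from row $s$ of the SSKT $T_0:=T|_{\D(\comp a)}$; non-attacking is then automatic because $\widehat{T|_{\D(\comp a')}}$ is a restriction of the non-attacking $\hat T_0$, and a new Type~II co-inversion triple can only arise from row $s$ having shrunk to be at most as long as some row $s'>s$ it previously exceeded — but then the co-inversion inequalities, combined with the weak monotonicity of rows, the all-ones nature of $B(y)$, and the entry bound $T_0(\cdot,\rho)\le\rho$ coming from $\maj(\hat T_0)=0$, force $T_0$ itself to contain either two attacking equal entries or another co-inversion triple, contradicting $T_0\in\SSKT$. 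If $y\in S$, then $\D(\comp a')=\D(\comp a)\cup B(y)$ is obtained by appending the all-ones cells $B(y)$ to row $s$ of $T_0$, which by the argument in the proof of Lemma~\ref{lem:inv-snake} ends exactly at the column immediately left of $y$; here one must show that no attacking equal pair and no Type~II co-inversion triple of $\widehat{T|_{\D(\comp a')}}$ involves a cell of $B(y)$. Tracking the relevant value-$1$ cells with Lemmas~\ref{lem:ledge} and~\ref{lem:flat}, any such configuration is seen to produce an $S$-attack of $T$ whose first cell lies strictly right of $x$, or in the column of $x$ strictly above $x$, or else a valid second cell for $x$ strictly below $y$ — each contradicting the maximality choices defining $x$ and $y$ in Definition~\ref{def:involution}. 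The main obstacle is precisely this $y\in S$ case: it is the most delicate part, and it is here that the specific maximality built into $\iota$ is essential.
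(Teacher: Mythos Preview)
Your overall architecture is the paper's: reduce to showing $T|_{\D(\comp a')}$ is an SSKT, dispose of $\maj$ and non-attacking in the easy direction, and split on whether $B(y)$ is removed from or added to $\D(\comp a)$. Your uniform treatment of Type~I triples (pass to $\hat T$ via $\comp a'\preceq\comp b$ and invoke $\coinv(\hat T)=0$ from Lemma~\ref{lem:1-snake}) is correct and in fact cleaner than the paper, which argues Type~I separately in each branch.

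The substantive gaps are the two Type~II analyses and the attacking check when $y\in S$; these are precisely the hard steps, and your text asserts the conclusions without giving the mechanisms. In the $y\notin S$ branch, the paper's actual argument is a rightward propagation: from a putative Type~II triple $i<j<k$ (with $j$ in the shrunken row, $i,k$ above), one uses that in $T_0$ the lower row was strictly longer, so the Type~I triple one column to the right forces the entry right of $j$ to exceed $i$ and hence to be $>1$; this keeps that cell outside $B(y)$, pushes the same configuration one column further, and iterates to a contradiction. Your listed ingredients (monotone rows, all-ones $B(y)$, the bound $T_0(\cdot,\rho)\le\rho$) do not by themselves produce this chain; the entry bound in particular plays no role.

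In the $y\in S$ branch your framing ``no new Type~II can involve a cell of $B(y)$'' is not the right reduction: because the row of $y$ grows, a brand-new Type~II co-inversion can appear with \emph{all three} cells already in $\D(\comp a)$ (the configuration was neither Type~I nor Type~II there since the upper row was shorter). The paper does not argue cell-by-cell; instead it shows $B(y)$ must lie in the upper row with $\comp a_r<\comp a_s$, then locates the cell $(\comp a_s,r)\in B(y)\subset S$ together with $(\comp a_s+1,s)\in S$ and checks this pair is an $S$-attack with first cell strictly right of (or above, same column as) $x$, contradicting the extremal choice of $x$. The attacking case is handled by a similar but separate case analysis that repeatedly produces forbidden $S$-attacks. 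Your final paragraph names the right kind of contradiction but does not carry out any of these verifications; as written, the proof is an outline of the paper's argument with the load-bearing steps omitted.
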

	\begin{proof}
		Since $S'$ is a special snake by Lemma \ref{lem:inv-snake}, we just need to show $T\in\mathcal G(S')$. We have $T|_{S'}=1$ by construction so we are further reduced to the claim that $T|_{\D(\comp b)\setminus S'}$ is an SSKT. 
		
		Say $\D(\comp a)=\D(\comp b)\setminus S$ and $\D(\comp a')=\D(\comp b)\setminus S'$. If $\D(\comp a')= \D(\comp a)\setminus B(y)$ then $\hat T|_{\D(\comp a')}$ will remain non-attacking with weakly decreasing rows left to right. Suppose $T|_{\D(\comp a')}$ contains a Type II (resp. Type I) co-inversion triple $i<j<k$ as in Fig. \ref{fig:inv}. This is only possible if the row $s$ containing $i,k$ is strictly (resp. weakly) shorter than the row $r$ containing $j$ in $T|_{\D(\comp a)}$. Since $T|_{\D(\comp a)}$ is an SSKT, the Type I case is impossible by Lemma \ref{lem:ledge}.
		
		For the Type II case we have $r<s$ and $\comp a_r>\comp a_s$. Then there is an entry $j'$ of $T|_{\D(\comp a)}$ immediately right of $j$, with $i<j'\ne 1$.  In particular $j'\ne 1$, so the corresponding cell is in $\D(\comp b')$ as well, where $\comp a'_r\le \comp a'_s\le\comp a_s$. So there is an entry $i'$ immediately right of $i$ in $T|_{\D(\comp a)}$ as well, with $i'\le i<j'$. This argument repeats indefinitely by replacing $i,j$ with $i',j'$ so we must reject the existence of the Type II triple. In this case $T|_{\D(\comp a')}$ is an SSKT.
		
		If instead $\D(\comp a')= \D(\comp a)\cup B(y)$ then in addition to ruling out co-inversion triples, we must ensure none of the new 1 entries from $B(y)$ attack any existing 1 entries in $T|_{\D(\comp a)}$, which we deal with first. Suppose we do have such a pair of attacking cells $z,w$ with $z\in B(y)\subset S$ and $w\in \D(\comp a)$. This pair satisfies (b) and (c) in Definition \ref{def:snake-attack}. Since $w\notin S$, any cell immediately left of $w$ is also not in $S$ so (d) is satisfied as well. Since $z$ is in a column strictly right of $x$, or above $x$ in the same column, satisfying (a) would contradict our choice of $x$, so we must take (a) to be false. The first way this may occur is if $w$ lies below $z$ in the same column. If $y=z$ then $x$ would be in position to attack $w$ contradicting our choice of $y$, so the cell $z'$ immediately left of $z$ must be in $B(y)\subset S$. This makes $z',w$ an $S$-attack which contradicts our choice of $x$. Then (a) must fail by having $w$ in the column immediately left of $z$, necessarily in a row $s$ above the row $r$ of $z$. We see $\comp a_r\le\comp a_s$ as $z$ but not $w$ is in $S$. However $T|_{\D(\comp a)}(w)=1$ which is therefore less than the entry in the same column of row $r$ contradicting Lemma \ref{lem:ledge}. Thus, $T|_{\D(\comp a')}$ is non-attacking.
		
		Now suppose once again that there exists a co-inversion triple $i<j<k$ in $T|_{\D(\comp a')}$, say with $i,k$ in row $r$ and $j$ in row $s$. If it is a Type I triple with $r<s$ and $\comp a'_r>\comp a'_s$ then we must have $j>1$ so the corresponding cell is in $\D(\comp a)$. Since the entries do not form a co-inversion triple in $T|_{\D(\comp a)}$, we must either have $\comp a_r\le\comp a_s$ or the cell containing $i$ not in $\D(\comp a)$, which implies $\comp a_r\le\comp a_s$ anyway. Then $j<k$ contradicts Lemma \ref{lem:ledge}. 
		
		Instead assume we have Type II triple with $r>s$ and $\comp a'_r\ge\comp a'_s$. Similar to before, we must have $\comp a_r<\comp a_s$ to avoid a co-inversion triple in $T|_{\D(\comp a)}$. That is, $\comp a_r<\comp a_s= \comp a'_s\le\comp a'_r$ with $B(y)$ in row $r$. This implies $(\comp a_s,r)\in B(y)$. Additionally, $\comp a\preceq\comp b$ implies $\comp a_s\le\comp a'_r\le \comp b_r<\comp b_s$ so $(\comp a_s+1,s)\in S$. Then $(\comp a_s,r)$ and $(\comp a_s+1,s)$ create an $S$-attack which contradicts the choice $x$. So the Type II co-inversion triple is impossible. We conclude $T|_{\D(\comp a')}$ is an SSKT which completes the proof.
	\end{proof}

	\begin{lemma}\label{lem:attack-invariant}
		In the setting of Definition \ref{def:involution}, a pair of cells $z,w\in\D(\comp b)$ is an $S$-attack if and only if it is an $S'$-attack.
	\end{lemma}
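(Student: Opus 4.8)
The plan is to leverage that $T$ is unchanged and that, by Lemma \ref{lem:sym-diff}, the symmetric difference of $S$ and $S'$ equals $B(y)$. Conditions (a) and (c) of Definition \ref{def:snake-attack} refer only to the positions and the $T$-values of the two cells, so a pair $z,w$ satisfies them relative to $S$ if and only if relative to $S'$; granting (a) and (c), the lemma reduces to showing that conditions (b) and (d) hold for $S$ exactly when they hold for $S'$. Since $S$ and $S'$ agree away from $B(y)$, the truth of (b) (``$z\in S$'') can only change when $z\in B(y)$, and the truth of (d) can only change when $z,w$ lie in distinct columns and the cell $v$ immediately left of $w$ lies in $B(y)$.

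The cornerstone of the argument is the observation that no cell of $B(y)$ can be the first cell of an $S$-attack: every cell of $B(y)$ lies in a column weakly right of $x$ (since the column of $y$ is weakly right of that of $x$), the only cell of $B(y)$ that can lie in $x$'s own column is $y$ itself, and $y$ lies strictly above $x$; since $x$ is chosen rightmost and then topmost among first cells of $S$-attacks, none of these positions can host one. The forward implication follows quickly. If $(z,w)$ is an $S$-attack then $z$ is a first cell of an $S$-attack, so $z\notin B(y)$ and hence $z\in S'$; and if the cell $v$ immediately left of $w$ lay in $B(y)$, then the column of $v$ equals that of $z$, hence lies weakly left of the column of $x$, while $v\in B(y)$ forces its column weakly right of that of $y$, which is itself weakly right of that of $x$, so all these columns coincide and $v=y$; then $z$ sits directly above $y$ and hence strictly above $x$ in $x$'s column while being a first cell of an $S$-attack, contradicting that $x$ is topmost. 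Thus $v\notin B(y)$ and conditions (b), (d) transfer to $S'$.

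For the reverse implication, suppose a flip occurs. If (d) flips via $v\in B(y)$, then Lemma \ref{lem:sym-diff} forces $S'=S\setminus B(y)$, so $v\in B(y)\subseteq S$; but $z$ sits in the same column strictly above $v$ with both entries $1$, so $(v,z)$ is an $S$-attack with first cell $v\in B(y)$, contradicting the cornerstone. If (b) flips via $z\in B(y)$, then Lemma \ref{lem:sym-diff} forces $S'=S\cup B(y)$, so $B(y)$ lies inside the SSKT $T|_{\D(\comp a)}$ where $\D(\comp a)=\D(\comp b)\setminus S$; since that SSKT is non-attacking and $z$ has entry $1$, the cell $w$ attacking $z$ must lie in $S$ and have entry $1$. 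One then checks, splitting on the orientation of the attack and on whether $z$ is the last cell of its row (in which case the row of $y$ has the same length in $\D(\comp a)$ as in $\D(\comp b)$): either a further cell of $B(y)$ lies immediately right of $z$, whence $(w,\text{that cell})$ is an $S$-attack whose first cell $w$ lies strictly right of, or strictly above, $x$, contradicting the maximality of $x$; or $z$ is last in its row, and then either Definition \ref{def:key-poset}(ii) applied to $\comp a\preceq\comp b$ is violated, or Lemma \ref{lem:ledge} forces the entry immediately left of $w$ in the SSKT to be strictly less than $1$ (impossible), or the cell immediately left of $w$ already lies in $S\subseteq S'$, so that (d) fails for $S'$ and $(z,w)$ was never an $S'$-attack. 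The main obstacle is precisely this last configuration --- $S'=S\cup B(y)$ with $z\in B(y)$ --- where (b) genuinely changes: ruling it out is the delicate part and requires combining the non-attacking property of $T|_{\D(\comp a)}$, the key-poset condition on $\comp a$, and the rightmost/topmost maximality of $x$; everything else is bookkeeping around Lemma \ref{lem:sym-diff} and the cornerstone.
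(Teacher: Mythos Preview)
Your proof is correct and follows the same overall architecture as the paper's: reduce to conditions (b) and (d) of Definition~\ref{def:snake-attack}, observe that no cell of $B(y)$ can be the first cell of an $S$-attack (your ``cornerstone,'' which the paper uses implicitly), and treat the forward and reverse implications separately. The forward direction and the reverse-direction case where (d) flips are handled essentially as in the paper.

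Where you genuinely diverge is the reverse-direction case in which (b) flips, i.e.\ $S'=S\cup B(y)$ with $z\in B(y)$. The paper exploits the weak connectedness of $S$: since $w\in S$ and the cell immediately left of $w$ is not in $S$, some cell $v\in S$ attacks $w$ from below or the left, so $(v,w)$ is an $S$-attack; a case analysis on the column of $x$ relative to $v$ and $w$ then produces a cell in $\D(\comp b)$ to the right of $z$ or $y$, and a further $S$-attack contradicting the maximality of $x$. You instead split on whether $z$ has a right neighbour $z'\in B(y)$: if so, $(w,z')$ is itself an $S$-attack with first cell $w$ lying strictly right of, or strictly above, $x$; if not, $z$ is the last cell of row $r$, so $\comp a_r=\comp b_r$, and then either $w$ sits above $z$ (forcing $\comp a_r>\comp a_s$ with $r<s$ while $\comp b_s\ge\comp b_r$, violating $\comp a\preceq\comp b$) or $w$ sits one column to the right (where condition (d) for $S'$ places the cell left of $w$ in $\D(\comp a)$, giving $\comp a_s=\comp a_r$ and a contradiction via Lemma~\ref{lem:ledge}). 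Both routes work; yours avoids invoking the weak connectedness of $S$ and is arguably more direct, but the final paragraph is compressed to the point of being a sketch---in particular, the sub-case where $w$ lies in the column right of $z$ deserves to be written out, since one must first pin down $\comp a_s=\comp a_r$ before Lemma~\ref{lem:ledge} applies.
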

	\begin{proof}
		Recall that $S'$ is a snake and $T\in\mathcal G(S')$ by Lemmas \ref{lem:inv-snake} and \ref{lem:inv-image}. Suppose the pair $(z,w)$ is an $S$-attack. Conditions (a) and (c) in Definition \ref{def:snake-attack} don't depend on $S$ so are satisfied for $S'$. By choice of $x$ we must have $z\notin B(y)$ so $z\in S'$. Suppose $w$ is in the column right of $z$. We know $x$ must lie in a column weakly right of $z$, and if they share a column then $x$ is weakly above $z$. Then $y$ is in a row strictly above $w$ or a column weakly right of $w$. The cell immediately left of $w$ is then not in $B(y)$, and since it cannot be in $S$ by (d) it is not in $S'$ either. Then (d) is satisfied $S'$ so $z,w$ is an $S'$-attack.
		
		Conversely assume $z,w$ is an $S'$-attack. Again, conditions (a) and (c) are already satisfied for $S$. To show $z\in S$ we must show $z\notin B(y)$. Assuming $z\in B(y)$ to the contrary, then $S'=S\cup B$ and $w\in S$ else $T|_{\D(\comp b)\setminus S}$ violates the non-attacking condition. Note that the cell immediately left of $w$ is not contained in $S'$, hence $S$, as it would violate either (d) in Definition \ref{def:snake-attack} or (3) in Definition \ref{def:snake}. Since $S$ is weakly connected, it must contain a cell $v$ attacking $w\ne(1,1)$ from the bottom or left so that $(v,w)$ is an $S$-attack. We cannot have $x$ in a column strictly left of $v$ or strictly right of $z\in B(y)$ so $x$ shares a column with $v$ or $w$. 
		
		If $x$ shares a column with $w$, this forces $z$ into the same column. We must have that $z=y$ with $x$ strictly below $y=z$ strictly below $w$. Since $z\notin S$ and $w\in S$, there must exist a cell $z'$ immediately right of $z$ in $\D(\comp b)$ to satisfy (2) in Definition \ref{def:snake}. But now the $S$-attack $(w,z')$ contradicts the choice of $x$. 
		
		Now suppose that instead of sharing a column with $w$, $x$ shares a column immediately left of $w$ with $v$. By choice of $x$, $x$ lies above $v$. Then $(x,w)$ is an $S$-attack so by choice of $y$, $y$ lies below $w$ in the same column. Since $w$ but not $y$ is in $S$, (2) in Definition \ref{def:snake} requires there to exist a cell $y'$ immediately right of $y$. Now $(w,y')$ is an $S$-attack that contradicts the choice of $x$. Therefore it must be the case that $z\notin B(y)$ so $z\in S$.
		
		Finally to demonstrate (d) in Definition \ref{def:snake-attack}, suppose $w$ is in the column right of $z$. Let $w'$ be the cell immediately left of $w$, which cannot be in $S'$ since $z,w$ is an $S'$-attack. If $w'$ were in $S$ then $w',z$ would be an $S$-attack forcing $x$ to lie in a column strictly right of $w'$, or above $w'$ in the same column. This would prevent $w'$ from being in $B(y)$ and therefore contradict $w'\notin S'$. We have thus shown that $z,w$ is an $S$-attack.
	\end{proof}

	We can now put everything together to see that $\iota$ is a sign-reversing involution.
	\begin{theorem}\label{thm:inv}
		Let $\comp b$ be a weak composition with $\comp b_1>0$. Then $\iota:\mathcal F\to \mathcal F$ is a well-defined involution that reverses the sign of the snake.
	\end{theorem}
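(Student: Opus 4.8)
The plan is to derive the three claims — that $\iota$ maps $\mathcal F$ into $\mathcal F$, that $\iota^2 = \mathrm{id}$, and that $\sgn(S') = -\sgn(S)$ — from the lemmas already assembled, all under the standing hypothesis $\comp b_1 > 0$ of Definition \ref{def:involution}. For well-definedness: given $(S,T)\in\mathcal F$, the hypothesis $T\notin\SSKT$ together with Lemma \ref{lem:snake-attack} guarantees an $S$-attack exists, so the cells $x$ and $y$, the block $B(y)$, and $S' = S\mathbin{\triangle} B(y)$ of Definition \ref{def:involution} are unambiguously defined; then Lemma \ref{lem:inv-snake} gives that $S'$ is a special snake of $\comp b$ and Lemma \ref{lem:inv-image} gives $(S',T)\in\mathcal F$, so $\iota\colon\mathcal F\to\mathcal F$ is well defined.

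For the involution property, the key observation is that the selection of $x$ and then $y$ in Definition \ref{def:involution} refers to $S$ only through the set of $S$-attacks. By Lemma \ref{lem:attack-invariant} the $S'$-attacks coincide exactly with the $S$-attacks, so re-running the procedure on $(S',T)$ picks out the same $x$, the same $y$, and hence the same block $B(y)$. Since symmetric difference with a fixed set is an involution, $\iota(\iota(S,T)) = \bigl(S'\mathbin{\triangle} B(y),\,T\bigr) = \bigl((S\mathbin{\triangle} B(y))\mathbin{\triangle} B(y),\,T\bigr) = (S,T)$.

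For the sign, note first that $S\ne\emptyset$, since an $S$-attack has its first cell in $S$. Write $r$ for the row of $y$, so $B(y)$ lies entirely in row $r$; by Lemma \ref{lem:sym-diff} either $S' = S\sqcup B(y)$ or $S' = S\setminus B(y)$. In the first case I would show $S$ has no cell in row $r$ at all: putting $\D(\comp a) = \D(\comp b)\setminus S$, the fact that $y\notin S$ forces $\comp a_r$ to be at least the column of $y$, so every cell of $S$ in row $r$ lies to the right of $y$ and hence belongs to $B(y)$, contradicting $S\cap B(y) = \emptyset$. Thus $S'$ occupies the rows of $S$ together with the new row $r$, giving $\height(S') = \height(S) + 1$. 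In the second case I would argue symmetrically that $B(y)$ is \emph{all} of $S$ in row $r$: the cell immediately left of $y$ is not in $S$ (otherwise constraint (3) of Definition \ref{def:snake} or constraint (d) of Definition \ref{def:snake-attack} would fail), which pins $\comp a_r$ to the column of $y$ minus one and forces every cell of $S$ in row $r$ to lie weakly right of $y$, hence in $B(y)$; moreover $S\ne B(y)$, for if $S$ lay in the single row $r$ the attacking pair $x,y\in S$ would be horizontally adjacent with the left cell strictly higher, which is impossible within one row. So $S'$ is a nonempty snake occupying one fewer row, giving $\height(S') = \height(S) - 1$. Either way $\sgn(S') = (-1)^{\height(S')-1} = -\sgn(S)$. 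The main obstacle is exactly this last bookkeeping: ruling out the degenerate output $S' = \emptyset$ (where the convention $\height(\emptyset) = 1$ would break sign reversal) and upgrading ``$B(y)$ is contained in the part of $S$ in row $r$'' to an equality; both reduce to the left-justification of key diagrams and the same structural constraints already used in Lemmas \ref{lem:inv-snake} and \ref{lem:inv-image}, so it is a matter of care rather than of a new idea.
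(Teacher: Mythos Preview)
Your proof is correct and follows the same route as the paper: well-definedness via Lemmas~\ref{lem:snake-attack}, \ref{lem:inv-snake}, \ref{lem:inv-image}; the involution property via Lemma~\ref{lem:attack-invariant} and the observation that $x,y$ are determined solely by the set of $S$-attacks; and sign reversal via Lemma~\ref{lem:sym-diff}. Where the paper simply asserts that ``$S$ and $S'$ differ in exactly one row in which exactly one of $S$ and $S'$ contain cells,'' you supply the row-by-row bookkeeping explicitly, which is a genuine service to the reader. One small simplification: your separate argument that $S'\ne\emptyset$ is not needed, since $(S',T)\in\mathcal F$ (already established via Lemma~\ref{lem:inv-image}) forces $T|_{\D(\comp b)\setminus S'}\in\SSKT$ while $T\notin\SSKT$, so $S'$ cannot be empty.
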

	\begin{proof}
		Suppose $(S',T)=\iota(S,T)$. We have $(S',T)\in\mathcal F$ by Lemma \ref{lem:inv-image}. By Lemma \ref{lem:attack-invariant}, an $S$-attack is equivalent to an $S'$-attack and it is therefore apparent from the definition that $\iota(S',T)=(S,T)$. From Lemma \ref{lem:sym-diff} we see that $S$ and $S'$ differ in exactly one row in which exactly one of $S$ and $S'$ contain cells. Thus $\sgn(S)=-\sgn(S')$.
	\end{proof}
	
	The involution $\iota$ is only defined when $\comp b$ is a weak composition with $\comp b_1>0$. Before proving Theorem \ref{thm:snakes} we need one more lemma to sidestep this restriction, and some notation.
	
	Suppose we have a set $I\subset [n]$ with elements $i_1<\cdots<i_\ell$. Define the set of weak compositions 
	\[
	C_{I,k}=\{\comp a\mid |\comp a|=k \text{ and if } \comp a_i> 0 \text{ then } i\in I \}.
	\]
	For any $\comp a\in C_{I,k}$, and any other subset $J\subset [n]$ with the same number of elements $j_1<\cdots<j_\ell$, we let $\comp a^J$ denote the weak composition in $C_{J,k}$ given by $\comp a^J_{j_m}=\comp a_{i_m}$.

	\begin{lemma}\label{lem:iso-subspaces}
		Let $I,J\subset \{1,\ldots,n\}$ with elements $i_1<\cdots<i_\ell$ and $j_1<\cdots <j_\ell$ respectively. The following diagram commutes.
		\begin{center}
			\begin{tikzcd}
				\operatorname{Span}\{ \h_{\comp a}\mid \comp a\in C_{I,k} \} \arrow[r,shift left, "\mathrm{id}"] \arrow[d, shift left, "\h_{\comp a^J}"]
				& \operatorname{Span}\{ \key_{\comp a}\mid \comp a\in C_{I,k}\} \arrow[l,shift left, "\mathrm{id}"] \arrow[d, shift left, "\key_{\comp a^J}"] \\
				\operatorname{Span}\{ \h_{\comp a}\mid \comp a\in C_{J,k} \} \arrow[r,shift left, "\mathrm{id}"] \arrow[u, shift left, "\h_{\comp a^I}"]
				& \operatorname{Span}\{ \key_{\comp a}\mid \comp a\in C_{J,k}\} \arrow[l,shift left, "\mathrm{id}"] \arrow[u, shift left, "\key_{\comp a^I}"]
			\end{tikzcd}
		\end{center}
	\end{lemma}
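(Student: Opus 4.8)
The plan is to verify the commuting square in three steps: the horizontal equalities $\operatorname{Span}\{\h_{\comp a}:\comp a\in C_{I,k}\}=\operatorname{Span}\{\key_{\comp a}:\comp a\in C_{I,k}\}$ (and the analogue for $J$), then the identity of the two downward maps once those spans are identified, and finally the statement that the downward and upward arrows are mutually inverse.

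Write $H_I=\operatorname{Span}\{\h_{\comp a}:\comp a\in C_{I,k}\}$ and $K_I=\operatorname{Span}\{\key_{\comp a}:\comp a\in C_{I,k}\}$. Both indexing families are linearly independent, being subsets of the $\h$-basis of Proposition \ref{prop:hbasis} and of the key basis, so $\dim H_I=\dim K_I=|C_{I,k}|<\infty$ and it is enough to show $H_I\subseteq K_I$. By Theorem \ref{thm:key-expand}, $\h_{\comp b}=\sum_{\comp a}\tilde K_{\comp a\comp b}\key_{\comp a}$, where $\tilde K_{\comp a\comp b}$ counts reverse SSAF $T$ with $\sh(T)=\comp a$ and $\wt(T)=\comp b$, so I must show that $\comp b\in C_{I,k}$ and $\tilde K_{\comp a\comp b}\ne 0$ force $\comp a\in C_{I,k}$. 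The key fact is that the leftmost entry of any nonempty row $r$ of a reverse SSAF equals $r$: indeed $\comaj(\hat T)=0$ gives $\hat T(1,r)\ge\hat T(0,r)=r$, and if $\hat T(1,r)=m>r$ then $(0,m)$ and $(1,r)$ are attacking cells of $\hat T$ carrying the common entry $m$, contradicting that $\hat T$ is non-attacking. Thus each nonempty row index of $T$ is a value taken by $T$; if $\wt(T)=\comp b$ is supported on $I$ those values lie in $I$, so $\sh(T)=\comp a$ is supported on $I$. Hence $H_I\subseteq K_I$, so $H_I=K_I$ by dimension, and likewise $H_J=K_J$.

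Next I claim $\tilde K_{\comp a\comp b}=\tilde K_{\comp a^J\comp b^J}$ for all $\comp a,\comp b\in C_{I,k}$, via an explicit relabeling bijection. Let $\phi\colon I\to J$ be the order isomorphism $i_m\mapsto j_m$. Given a reverse SSAF $T$ with $\sh(T)=\comp a$ and $\wt(T)=\comp b$, the previous step shows only rows indexed by $I$ are nonempty and every entry of $\hat T$ in an actual cell or in a basement cell $(0,i_m)$ lies in $I$; let $T^\phi$ be the filling of $\D(\comp a^J)$ obtained by relabeling each row index $i_m$ to $j_m$ and applying $\phi$ to each entry of $T$. Then $\sh(T^\phi)=\comp a^J$, $\wt(T^\phi)=\comp b^J$, and $T^\phi$ is again a reverse SSAF, because the non-attacking condition, $\comaj$, and $\inv$ are each determined by the columns of the cells involved, the relative order of their row indices, the comparison of the relevant parts of the shape, and the relative order of their entries — all preserved by an order-preserving relabeling of rows together with $\phi$. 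The one delicate point is that a basement cell $(0,r)$ with $r\notin J$ carries the value $r$, which occurs in no actual cell of $T^\phi$, so it cannot create an attacking equal-valued pair; the analogous fact for $T$ and $I$ gives the reverse implication. Since $T\mapsto T^\phi$ is inverted by relabeling with $\phi^{-1}$, it is a bijection and $\tilde K_{\comp a\comp b}=\tilde K_{\comp a^J\comp b^J}$.

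Finally, define $\Phi\colon H_I\to H_J$ and $\Psi\colon K_I\to K_J$ on bases by $\Phi(\h_{\comp a})=\h_{\comp a^J}$ and $\Psi(\key_{\comp a})=\key_{\comp a^J}$; these are isomorphisms because $\comp a\mapsto\comp a^J$ is a bijection $C_{I,k}\to C_{J,k}$, and their inverses are $\h_{\comp a}\mapsto\h_{\comp a^I}$ and $\key_{\comp a}\mapsto\key_{\comp a^I}$ since $(\comp a^J)^I=\comp a$, which settles the upward arrows. Using $H_I=K_I$ from the first step, for $\comp b\in C_{I,k}$ we compute
\[
\Psi(\h_{\comp b})=\sum_{\comp a\in C_{I,k}}\tilde K_{\comp a\comp b}\key_{\comp a^J}=\sum_{\comp c\in C_{J,k}}\tilde K_{\comp c^I\comp b}\key_{\comp c}=\sum_{\comp c\in C_{J,k}}\tilde K_{\comp c\comp b^J}\key_{\comp c}=\h_{\comp b^J}=\Phi(\h_{\comp b}),
\]
using the first step, the substitution $\comp c=\comp a^J$, the second step, and the first step applied to $J$. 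Hence $\Phi=\Psi$ as maps $H_I=K_I\to H_J=K_J$ and the diagram commutes. I expect the main obstacle to be the verification in the second step that the order-preserving relabeling really does carry reverse SSAF to reverse SSAF — in particular that the rigid basement causes no trouble; granting that and the ``leftmost entry of row $r$ equals $r$'' fact, the rest is linear algebra.
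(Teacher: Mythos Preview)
Your proof is correct and follows essentially the same route as the paper's: both establish $H_I=K_I$ from the observation that the first-column entry of a reverse SSAF equals its row index (so the support of $\sh(T)$ is contained in that of $\wt(T)$) together with a dimension count, and both prove the compatibility of the two vertical maps via the order-preserving relabeling bijection $T\mapsto T^\phi$ on reverse SSAF. Your treatment is slightly more explicit about why that first-column entry equals $r$ and about the basement cells outside $J$, but the argument is the same in substance.
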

	\begin{proof}
		Any entry in the first column of a reverse SSAF $T$ must match its row index. Then $\wt(T)_m=0$ implies $\sh(T)_m=0$. By Theorem \ref{thm:key-expand} we can therefore see 
		\[
		\operatorname{Span}\{ \h_{\comp a}\mid \comp a\in C_{I,k} \}\subset\operatorname{Span}\{ \key_{\comp a}\mid \comp a\in C_{I,k} \}
		\]
		and restricting to any complete homogeneous subspace shows they coincide due to dimension.
		
		For $\comp a\in C_{I,k}$ we have $(\comp a^{J})^I_{i_m}=\comp a^{J}_{j_m}=\comp a_{i_m}$ so we see that $\h_{\comp a}\mapsto \h_{\comp a^J}$ and $\key_{\comp a}\mapsto \key_{\comp a^J}$ are indeed isomorphisms with the specified inverses. By Theorem \ref{thm:key-expand} we have
		\[
		\h_{\comp a^J}=\sum_{\substack{T\in\rSSAF\\\wt(T)=\comp a^J}} \key_{\sh(T)}
		\]
		so we are done once we show
		\[
		\sum_{\substack{T\in\rSSAF\\\wt(T)=\comp a^J}} \key_{\sh(T)}=\sum_{\substack{T\in\rSSAF\\\wt(T)=\comp a}} \key_{\sh(T)^J}.
		\]
		
		Indeed, if $T$ is a reverse SSAF of weight $\comp a$, we obtain a new reverse SSAF $T^J$ of weight $\comp a^J$ and shape $\sh(T)^J$ by setting
		\[
		T^J(c,j_r)= j_s
		\]
		whenever $T(c,i_r)=i_s$. Entries in the first column still match their row index. Moreover for a pair of cells $(c,i_r),(d,i_s)$ in $T$, the corresponding pair of cells $(c,j_r),(d,j_s)$ in $T^J$ maintains the same relative ordering of rows, columns, and entries. So $T^J$ will in fact remain a reverse SSAF. The inverse map $T\mapsto T^I$ is defined symmetrically, and we are done.
	\end{proof}

	\begin{proof}[Proof of Theorem \ref{thm:snakes}]
		The statement is trivial for $\comp b=(0,0,\ldots)$ so we take $\comp b$ to have at least one nonzero part and proceed under the assumption that the expansion holds for any weak composition with strictly fewer nonzero parts. We first assume $\comp b_1> 0$.
		
		If $S$ is a special snake of $\D(\comp b)$ then it contains all of the first row so $\D(\comp b)\setminus S$ has a shape with strictly fewer nonzero parts than $\comp b$. Therefore using the inductive hypothesis
		\begin{align*}
			\sum_{\sh(U)=\comp b}\sgn(S)\h_{\wt(U)}&=\sum_{S}\sgn(S)h_{(|S|,0,0,\ldots)} \sum_{\sh(U')=\sh(\D(\comp b)\setminus S)}\sgn(U')\h_{\wt(U')}\\
			&=\sum_{S}\sgn(S)x_1^{|S|} \key_{\sh(\D(\comp b)\setminus S)}\\
			&=\sum_{S}\sgn(S)x_1^{|S|}\sum_{\substack{T\in\SSKT\\ \sh(T)=\sh(\D(\comp b)\setminus S)}}x^{\wt(T)}\\
			&=\sum_{\substack{(S,T)\\ T\in \mathcal G(S)}}\sgn(S)x^{\wt(T)}
		\end{align*}
		where the special snake tabloids $U$ of $\D(\comp b)$ decompose into a special snake $S$ of $\D(\comp b)$ and a special snake tabloid $U'$ of $\D(\comp b)\setminus S$.
		
		It is now our goal to show
		\begin{equation}\label{eq:snake-to-key}
			\sum_{\substack{(S,T)\\ T\in \mathcal G(S)}}\sgn(S)x^{\wt(T)}=\sum_{\substack{T \in \SSKT\\ \sh(T)=\comp b}}x^{\wt(T)}=\key_{\comp b}.
		\end{equation}
		
		The snake $S$ is weakly connected, and for cells in distinct rows to be weakly connected is for them to be attacking. Then if $T\in \mathcal G(S)$ is an SSKT, $S$ can be nothing but the first row of $\D(\comp b)$ and we have $\sgn(S)=1$. Conversely, for $T\in\SSKT$ we have $T\in \mathcal G(S)$ if we take $S$ to be the first row of $\D(\comp b)$. This is to say 
		\[
		\sum_{\substack{(S,T)\\ T\in \mathcal G(S)}}\sgn(S)x^{\wt(T)}=\sum_{\substack{T \in \SSKT\\ \sh(T)=\comp b}}x^{\wt(T)}+\sum_{(S,T)\in \mathcal F}\sgn(S)x^{\wt(T)}.
		\]
		The last summation is zero by Theorem \ref{thm:inv} so \eqref{eq:snake-to-key} follows.
		
		We now drop the assumption that $\comp b_1>0$. Take $I=\{i_1<\cdots< i_\ell\}$ to be the nonzero indices of $\comp b$. Let $J=\{j_1<\cdots<j_\ell\}$ be any subset of $[n]$ of the same size with $j_1=1$. Then $\comp b^J_1=\comp b_{i_1}>0$ and $\comp b^J$ has the same number of nonzero parts as $\comp b$. The special snake tabloid expansion is therefore valid for $\comp b^J$. Using Lemma \ref{lem:iso-subspaces} we have
		\[
		\key_{\comp b}=\key_{(\comp b^J)^I}=\sum_{\sh(U)=\comp b^J}\sgn(U)\h_{\wt(U)^I}.
		\]
		Given a special snake tabloid $U=(S_1,\ldots,S_n)$ of shape $\comp b^J$ we obtain a special snake tabloid $U'=(S'_1,\ldots,S'_n)$ of shape $\comp b$ and weight $\wt(S)^I$ by requiring that $(c,i_r)\in U'_{i_s}$ if and only if $(c,j_r)\in U_{j_s}$. Given a pair of cells $(c,j_r),(d,j_s)$ in $U$, the corresponding pair of cells $(c,i_r),(d,i_s)$ in $U'$ maintains the same relative order of their row, column, and snake indices. So $U'$ indeed remains a special snake tabloid, and the inverse map from special snake tabloids of shape $\comp b$ to tabloids of shape $\comp b^J$ is defined symmetrically. Additionally, each $U_{j_m}$ contains cells in the same number of rows as does $U'_{i_m}$, so $\sgn(U)=\sgn(U')$. We see that
		\[
		\key_{\comp b}=\sum_{\sh(U)=\comp b^J}\sgn(U)\h_{\wt(U)^I}=\sum_{\sh(U)=\comp b}\sgn(U)\h_{\wt(U)}
		\]
		completing the inductive step and the proof.
	\end{proof}


	%
	%
	
	\bibliographystyle{amsalpha} 
	\bibliography{FlaggedKostka}

\end{document}